\newtheorem{theorem}{Theorem}[section]
\newtheorem{lemma}[theorem]{Lemma}
\newtheorem{proposition}[theorem]{Proposition}
\theoremstyle{plain}
\theoremstyle{definition}
\newtheorem{definition}[theorem]{Definition}
\numberwithin{equation}{section}
\renewcommand{\labelenumi}{\textup{(\theenumi)}}
\newcommand{\Homeo}{\operatorname{Homeo}}
\newcommand{\id}{\operatorname{id}}
\newcommand{\Ker}{\operatorname{Ker}}
\newcommand{\dom}{\operatorname{dom}}
\newcommand{\ran}{\operatorname{ran}}
\newcommand{\Ad}{\operatorname{Ad}}
\newcommand{\K}{\mathcal{K}}
\newcommand{\C}{\mathcal{C}}
\newcommand{\N}{\mathbb{N}}
\newcommand{\T}{\mathbb{T}}
\newcommand{\Z}{\mathbb{Z}}
\newcommand{\Zp}{{\mathbb{Z}}_+}
\def\OFL{{\mathcal{O}}_{\frak L}}
\def\DFL{{\mathcal{D}}_{\frak L}}
\def\GNOD{{ G_{N(\OFL,\DFL)} }}
\title{A groupoid approach to $C^*$-algebras associated with $\lambda$-graph systems
and continuous orbit equivalence of subshifts
}
\author{Kengo Matsumoto \\
Department of Mathematics \\
Joetsu University of Education \\
Joetsu, 943-8512, Japan
}
\begin{document}
\maketitle

\date{}

\def\det{{{\operatorname{det}}}}

\begin{abstract} 
A $\lambda$-graph system $\frak L$ 
is a labeled Bratteli diagram with shift operation.
It is a generalized notion of finite labeled graph and presents a subshift. 
We will study continuous orbit equivalence of one-sided subshifts and 
topological conjugacy of two-sided subshifts from the view points of groupoids
and $C^*$-algebras constructed from $\lambda$-graph systems.
\end{abstract}



\def\V{{\mathcal{V}}}
\def\E{{\mathcal{E}}}

\def\OA{{{\mathcal{O}}_A}}
\def\OB{{{\mathcal{O}}_B}}
\def\OZ{{{\mathcal{O}}_Z}}
\def\OTA{{{\mathcal{O}}_{\tilde{A}}}}
\def\SOA{{{\mathcal{O}}_A}\otimes{\mathcal{K}}}
\def\SOB{{{\mathcal{O}}_B}\otimes{\mathcal{K}}}
\def\SOZ{{{\mathcal{O}}_Z}\otimes{\mathcal{K}}}
\def\SOTA{{{\mathcal{O}}_{\tilde{A}}\otimes{\mathcal{K}}}}
\def\DA{{{\mathcal{D}}_A}}
\def\DB{{{\mathcal{D}}_B}}
\def\DZ{{{\mathcal{D}}_Z}}
\def\DTA{{{\mathcal{D}}_{\tilde{A}}}}
\def\SDA{{{\mathcal{D}}_A}\otimes{\mathcal{C}}}
\def\SDB{{{\mathcal{D}}_B}\otimes{\mathcal{C}}}
\def\SDZ{{{\mathcal{D}}_Z}\otimes{\mathcal{C}}}
\def\SDTA{{{\mathcal{D}}_{\tilde{A}}\otimes{\mathcal{C}}}}
\def\BC{{{\mathcal{B}}_C}}
\def\BD{{{\mathcal{B}}_D}}
\def\OAG{{\mathcal{O}}_{A^G}}
\def\OBG{{\mathcal{O}}_{B^G}}
\def\Max{{{\operatorname{Max}}}}
\def\Per{{{\operatorname{Per}}}}
\def\PerB{{{\operatorname{PerB}}}}
\def\Homeo{{{\operatorname{Homeo}}}}
\def\HA{{{\frak H}_A}}
\def\HB{{{\frak H}_B}}
\def\HSA{{H_{\sigma_A}(X_A)}}
\def\Out{{{\operatorname{Out}}}}
\def\Aut{{{\operatorname{Aut}}}}
\def\Ad{{{\operatorname{Ad}}}}
\def\Inn{{{\operatorname{Inn}}}}
\def\det{{{\operatorname{det}}}}
\def\exp{{{\operatorname{exp}}}}
\def\cobdy{{{\operatorname{cobdy}}}}
\def\Ker{{{\operatorname{Ker}}}}
\def\ind{{{\operatorname{ind}}}}
\def\id{{{\operatorname{id}}}}
\def\supp{{{\operatorname{supp}}}}
\def\ActA{{{\operatorname{Act}_{\DA}(\mathbb{T},\OA)}}}
\def\ActB{{{\operatorname{Act}_{\DB}(\mathbb{T},\OB)}}}
\def\RepOA{{{\operatorname{Rep}(\mathbb{T},\OA)}}}
\def\RepDA{{{\operatorname{Rep}(\mathbb{T},\DA)}}}
\def\RepDB{{{\operatorname{Rep}(\mathbb{T},\DB)}}}
\def\U{{{\mathcal{U}}}}

\def\lgs{$\lambda$-graph system}
\def\lgss{$\lambda$-graph systems}
\def\lgss{$\lambda$-graph subsystem}
\def\sms{symbolic matrix system}
\def\smss{symbolic matrix systems}

\section{Introduction}
Interplay between symbolic dynamics  and $C^*$-algebras has been initiated by 
J. Cuntz in \cite{Cuntz} and Cuntz--Krieger in \cite{CK} (see also \cite{Cu3}).
In the former paper, 
Cuntz introduced  a family  $\mathcal{O}_N, N=2,3,\dots$ of $C^*$-algebras
from full $N$-shifts. 
In the latter paper, 
Cuntz--Krieger introduced a family $\OA$ of $C^*$-algebras from general
topological Markov shifts $(\Lambda_A, {\sigma}_A)$ 
defined by $N\times N$ square matrices $A$
with entries in $\{0,1\}$,
as a generalization of Cuntz algebras 
$\mathcal{O}_N.$
The Cuntz algebras $\mathcal{O}_N$ and Cuntz--Krieger algebras $\OA$
have played extremely important r\^{o}le of classification and structure theory of $C^*$-algebras.
Cuntz--Krieger in \cite{CK} 
among other things showed close relationship
between topological Markov shifts and $C^*$-algebras 
by proving several fundamental results below.
Let us denote by $\rho^A$ the gauge action on $\OA$ and
$\DA$ the canonical diagonal $C^*$-subalgebra of the AF-subalgebra
$(\OA)^{\rho}$ 
of $\OA$ consisting of elements fixed by  the action $\rho^A$. 
We henceforth denote by $\K$ the $C^*$-algebra 
of compact operators on  separable infinite dimensional Hilbert space $\ell^2(\N)$ and
by $\C$ the maximal abelian $C^*$-subalgebra of $\K$ 
consisting of diagonal operators on $\ell^2(\N)$.
An irreducible square matrix with entries in $\{0,1\}$
is said to satisfy condition (I) if it is not a permutation matrix (\cite{CK}). 
\begin{theorem}[Cuntz--Krieger {\cite{CK}}]
Let $A, B$ be irreducible square matrices 
with entries  in $\{0,1\}$ satisfying condition (I).
\begin{enumerate}
\renewcommand{\theenumi}{\roman{enumi}}
\renewcommand{\labelenumi}{\textup{(\theenumi)}}
\item
If the one-sided topological Markov shifts $(X_A,\sigma_A)$ and 
$(X_B,\sigma_B)$ are topologically conjugate,
then there exists an isomorphism
$\Phi:\OA\longrightarrow \OB$ of the Cuntz--Krieger algebras such that 
$\Phi(\DA) = \DB$ and 
$\Phi\circ\rho^A_t = \rho^B_t\circ\Phi, t \in \T$,
\item   
If the two-sided topological Markov shifts $(\Lambda_A,{\sigma}_A)$ and 
$(\Lambda_B, {\sigma}_B)$ are topologically conjugate,
then there exists an isomorphism
$\bar{\Phi}:\SOA\longrightarrow \SOB$ of the stabilized Cuntz--Krieger algebras such that 
$\bar{\Phi}(\SDA) = \SDB$ and 
$\bar{\Phi}\circ(\rho^A_t\otimes\id) = (\rho^B_t\circ\otimes\id)\circ\bar{\Phi}, t \in \T$,
\item   
If the two-sided topological Markov shifts $(\Lambda_A,{\sigma}_A)$ and 
$(\Lambda_B,{\sigma}_B)$ are flow equivalent, 
then there exists an isomorphism
$\Psi:\SOA\longrightarrow \SOB$ of the stabilized Cuntz--Krieger algebras such that 
$\Psi(\SDA) = \SDB$.
\end{enumerate}
\end{theorem}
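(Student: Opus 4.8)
The plan is to route all three statements through the \'etale groupoid model of $\OA$. For a one-sided topological Markov shift $(X_A,\sigma_A)$ put
\[
G_A=\{(x,k-l,y)\in X_A\times\Z\times X_A : k,l\in\Zp,\ \sigma_A^k(x)=\sigma_A^l(y)\},
\]
an amenable \'etale groupoid with unit space $X_A$, a basis for its topology being the sets $\{(x,k-l,y):x\in U,\ \sigma_A^k(x)=\sigma_A^l(y),\ y\in V\}$ over open $U,V$ on which $\sigma_A^k,\sigma_A^l$ are injective with $\sigma_A^k(U)=\sigma_A^l(V)$. The Deaconu--Renault picture gives a canonical isomorphism $\OA\cong C^*(G_A)$ carrying $\DA$ onto $C(X_A)=C(G_A^{(0)})$ and $\rho^A$ onto the action dual to the cocycle $c_A(x,n,y)=n$; under the standing hypotheses $G_A$ is minimal and topologically principal, so $\DA$ is a Cartan subalgebra. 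Granting this, (i) is quick: a homeomorphism $h\colon X_A\to X_B$ with $h\circ\sigma_A=\sigma_B\circ h$ is a sliding block code, and $\sigma_A^k(x)=\sigma_A^l(y)$ holds iff $\sigma_B^k(h(x))=\sigma_B^l(h(y))$, so $(x,n,y)\mapsto(h(x),n,h(y))$ is a groupoid isomorphism $G_A\cong G_B$ matching the basic bisections, restricting to $h$ on unit spaces, and intertwining $c_A$ with $c_B$; the induced $*$-isomorphism $\Phi\colon C^*(G_A)\to C^*(G_B)$ then has $\Phi(\DA)=\DB$ and $\Phi\circ\rho^A_t=\rho^B_t\circ\Phi$ for all $t\in\T$.

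For (ii) one passes to the stabilisations, using $\SOA\cong C^*(G_A)\otimes\K\cong C^*(G_A\times\mathcal R)$, where $\mathcal R$ is the full equivalence relation on a countably infinite set; then $\SDA\cong\DA\otimes\C\cong C_0(X_A\times\mathcal R^{(0)})$ is the diagonal on the (non-compact) unit space and $\rho^A\otimes\id$ is dual to the cocycle $(g,r)\mapsto c_A(g)$. The main obstacle of the whole theorem appears here: a two-sided sliding block code carries both memory and anticipation, so it does \emph{not} descend to the one-sided shifts and gives no obvious action on $G_A$ --- which is precisely why stabilisation is forced on us in this item. The way through is Williams' theorem: two-sided conjugacy of $(\bar X_A,\bar\sigma_A)$ and $(\bar X_B,\bar\sigma_B)$ is realised by a finite chain of elementary strong shift equivalences $A=RS,\ B=SR$, and for each such equivalence one builds an explicit isomorphism $\SOA\cong\SOB$ --- via Cuntz--Krieger's ``$Z$-algebra'' attached to the bipartite data $(R,S)$, or via its groupoid reformulation as an equivalence of $G_A\times\mathcal R$ and $G_B\times\mathcal R$ --- that carries $\SDA$ onto $\SDB$ and respects the gauge grading. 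Composing along the chain yields $\bar\Phi$ with $\bar\Phi(\SDA)=\SDB$ and $\bar\Phi\circ(\rho^A_t\otimes\id)=(\rho^B_t\otimes\id)\circ\bar\Phi$ for all $t\in\T$.

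For (iii), by the Parry--Sullivan decomposition, flow equivalence of $(\bar X_A,\bar\sigma_A)$ and $(\bar X_B,\bar\sigma_B)$ is generated by strong shift equivalences --- already treated in (ii), and there even gauge-equivariantly --- together with symbol expansion moves. For an expansion move, stretching one symbol into a length-two path through a newly added vertex, one writes down the associated isomorphism $\SOA\cong\SOB$ directly; being implemented by a homeomorphism of the relevant path spaces it carries $\SDA$ onto $\SDB$, but it changes word lengths and so does not intertwine the gauge actions, which is exactly why only the diagonal is retained in this item. Composing the isomorphisms attached to a chain of generating moves produces $\Psi$ with $\Psi(\SDA)=\SDB$. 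As in (ii), the genuine work is the stabilisation/groupoid bookkeeping for the generating moves; the loss of gauge-equivariance at the expansion step is expected rather than a defect.
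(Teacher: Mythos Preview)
The paper does not prove this theorem at all: it is quoted in the introduction as a result of Cuntz--Krieger \cite{CK} and serves only as motivation for the paper's generalizations to $\lambda$-graph systems. There is therefore no ``paper's own proof'' to compare your proposal against.

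That said, your plan is a correct outline of a proof, and it is worth noting how it relates to what the paper \emph{does} prove. For (i), your groupoid argument is essentially the special case of the paper's Theorem~\ref{thm:eventconj} (and Proposition~\ref{prop:eventconj}) when the $\lambda$-graph systems come from finite graphs and the eventual conjugacy is an actual conjugacy; the paper's machinery would give (i) immediately. For (ii), your route via Williams' theorem and elementary strong shift equivalences is the original Cuntz--Krieger argument; the paper instead develops its own version for $\lambda$-graph systems in Section~7 (Theorem~\ref{thm:twosidedconjugacy1} and Corollary~\ref{cor:twosidedconjugacy2}), passing through the stabilized groupoid $G_{\widetilde{\frak L}}$ rather than the $Z$-algebra, but the underlying idea --- reduce to elementary equivalences and check each preserves the relevant structure --- is the same. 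For (iii), your Parry--Sullivan reduction is the standard argument and is not treated in this paper. One small caution: in (ii) you describe the step as building an equivalence of $G_A\times\mathcal R$ and $G_B\times\mathcal R$, but to preserve the diagonal and gauge action you need an actual cocycle-preserving \emph{isomorphism} of these groupoids, not merely a Morita equivalence; this is what the paper's Corollary~\ref{cor:onesidedtildegroupoid} provides in its setting.
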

The author in \cite{MaPacific} introduced 
a notion of continuous orbit equivalence of one-sided topological Markov shifts.
By using the notion, 
the author proved that   
the one-sided topological Markov shifts $(X_A,\sigma_A)$ and 
$(X_B,\sigma_B)$ are eventually conjugate if and only if there exists an isomorphism
$\Phi:\OA\longrightarrow \OB$ of the Cuntz-Krieger algebras such that 
$\Phi(\DA) = \DB$ and 
$\Phi\circ\rho^A_t = \rho^B_t\circ\Phi, t \in \T$.
Recently, 
K. A. Brix and T.  M. Carlsen in \cite{BC}
 characterized one-sided topological conjugate Markov shifts in terms of its groupoids 
and the Cuntz-Krieger algebras with gauge actions.
T. M. Carlsen  and J. Rout in \cite{CR} proved the converse of the above (ii)
for more general graph algebras by using groupoid technique
studied in earlier works (\cite{AER}, \cite{CRS}, etc.).
Concerning flow equivalence, 
H. Matui and the author proved the converse of (iii) 
(\cite{MMKyoto}, see \cite{CEOR} for more general matrices).

In this paper, 
we will study generalization of the above Cuntz--Krieger's Theorem 
to general subshifts.
To construct a $C^*$-algebra from a general subshift, 
we have used in \cite{MaDocMath2002} 
a graphical object called $\lambda$-graph system.
A $\lambda$-graph system 
${\frak L}=(V, E,\lambda,\iota)$ over a finite  alphabet $\Sigma$
consists of a vertex set $V =\bigcup_{l=0}^\infty V_l$, 
an edge set 
$E =\bigcup_{l=0}^\infty E_{l,l+1}$, 
a labeling map
$\lambda:E\longrightarrow\Sigma$ and 
a surjection 
$\iota(=\iota_{l,l+1}):V_{l+1}\longrightarrow V_l, l \in \Zp$ 
such that 
$(V,E,\lambda)$ is a labeled Bratteli diagram and $\iota$ 
plays a r{\^o}le of  shift on 
the Bratteli diagram.
A $\lambda$-graph system is said to be left-resolving
if two edges $e,f \in E_{l,l+1}$ have the same terminal vertex $t(e) = t(f)$
and the same label $\lambda(e) = \lambda(f),$
then $e=f.$
A finite directed  labeled graph naturally yields a $\lambda$-graph system,
and any $\lambda$-graph system presents a subshift.
Conversely any subshift may be presented by a left-resolving $\lambda$-graph system.
Let $(\Lambda,\sigma_\Lambda)$ 
denote the presented two-sided subshift
 and $(X_\Lambda,\sigma_\Lambda)$  the presented one-sided subshift
by a $\lambda$-graph system ${\frak L}$.
A $\lambda$-graph system 
${\frak L}=(V, E,\lambda,\iota)$ over an alphabet $\Sigma$
gives rise to a topological dynamical system
$(X_{\frak L},\sigma_{\frak L})$ 
and a continuous surjection
$\pi_{\frak L}:X_{\frak L} \longrightarrow X_\Lambda$
such that 
$\sigma_\Lambda\circ\pi_{\frak L} = \pi_{\frak L}\circ \sigma_{\frak L}.$
If ${\frak L}$ is defined by a finite labeled graph,
the presented subshift becomes a sofic shift and the surjection $\pi_{\frak L}$ exactly corresponds to 
the Markov cover for the sofic shift.
The topological dynamical system
$(X_{\frak L},\sigma_{\frak L})$ 
naturally yields an \'etale groupoid 
$G_{\frak L}$  and a continuous graph $E_{\frak L}$
in the sense of V. Deaconu (\cite{MaDocMath2002}, cf. \cite{De}, \cite{De2}).

In \cite{MaDocMath2002}, the author introduced a $C^*$-algebra $\OFL$
from $\lambda$-graph system ${\frak L}$
as a generalization of Cuntz--Krieger algebras.
The $C^*$-algebra was defined to be the  $C^*$-algebra $C^*(G_{\frak L})$ 
of the groupoid $G_{\frak L}$. 
It has a gauge action $\rho^{\frak L}$ of $\T$ and more generally
a generalized gauge action $\rho^{{\frak L},f}$ on it for a continuous homomorphism
$f:G_{\frak L}\longrightarrow \Z$ of groupoids.
As in \cite{MaDocMath2002}, 
the fixed point algebra $(\OFL)^{\rho^{\frak L}}$
of
$\OFL$ under the gauge action
$\rho^{\frak L}$ is an AF-algebra 
whose diagonal subalgebra denoted by $\DFL$ is canonically isomorphic to the commutative 
$C^*$-algebra $C(X_{\frak L})$ 
of continuous functions on $X_{\frak L}$.
A $\lambda$-graph system ${\frak L}$ is said to be essentially free 
if the topological dynamical system
$(X_{\frak L},\sigma_{\frak L})$ 
is essentially free.
When this is the case, 
the subalgebra $\DFL$  is maximal abelian in $\OFL.$ 
If a $\lambda$-graph system satisfies condition (I) in the sense of 
\cite{MaDocMath2002}, it is essentially free.
There is a natural surjection 
$\pi_{\frak L}: X_{\frak L} \longrightarrow X_{\Lambda}$ 
which induces an embedding 
$C(X_{\Lambda})\hookrightarrow C(X_{\frak L})$.
The algebra $C(X_\Lambda)$ denoted by
$\mathcal{D}_\Lambda$ 
 is regarded as a subalgebra of $\DFL$.

Continuous orbit equivalence between general one-sided subshifts has been first studied in \cite{MaYMJ2010}(cf. \cite{MaCM2009})
in terms of factor maps $\pi_{\frak L}: X_{\frak L} \longrightarrow X_\Lambda.$
The continuous orbit equivalence between two factor maps 
$\pi_{{\frak L}_1}: X_{{\frak L}_1} \longrightarrow X_{\Lambda_1}$ 
and 
$\pi_{{\frak L}_2}: X_{{\frak L}_2} \longrightarrow X_{\Lambda_2}$ 
will be called  $({\frak L}_1,{\frak L}_2)$-continuously orbit equivalent in Section 4. 
In this paper, we will study continuous orbit equivalence between subshifts, 
conjugacy of the one-sided and the  two-sided topological dynamical systems 
$({X}_{\frak L},{\sigma}_{\frak L})$ and
$(\bar{X}_{\frak L},\bar{\sigma}_{\frak L})$
and the associated $C^*$-algebras $\OFL$ from the view points of groupoids. 
The discussions in this paper are inspired by Arklint--Eilers--Ruiz's paper
\cite{AER} and Carlsen--Rout's paper \cite{CR}.
We will first show the following theorem. 
\begin{theorem}\label{thm:onesidedcoe}
Let ${\frak L}_1$ and 
 ${\frak L}_2$ be left-resolving $\lambda$-graph systems satisfying condition (I).
Let $(X_{\Lambda_1}, \sigma_{\Lambda_1})$ 
and 
$(X_{\Lambda_2}, \sigma_{\Lambda_2})$
be the one-sided subshifts presented by  ${\frak L}_1$ and 
 ${\frak L}_2$, respectively.
Then the  following three assertions are equivalent:
\begin{enumerate}
\renewcommand{\theenumi}{\roman{enumi}}
\renewcommand{\labelenumi}{\textup{(\theenumi)}}
\item One-sided subshifts 
$(X_{\Lambda_1},\sigma_{\Lambda_1})$
and
$(X_{\Lambda_2},\sigma_{\Lambda_2})$
are 
$({\frak L}_1,{\frak L}_2)$-continuously orbit equivalent.
\item
There exist an isomorphism
$\varphi:G_{{\frak L}_1}\longrightarrow G_{{\frak L}_2}$
of \'etale groupoids and a homeomorphism
$h:X_{\Lambda_1}\longrightarrow X_{\Lambda_2}$
of the shift spaces such that
$\pi_{{\frak L}_2} \circ\varphi|_{X_{{\frak L}_1}} = h\circ\pi_{{\frak L}_1}$,
where $X_{{\frak L}_i}$ is identified with
the unit space $G_{{\frak L}_i}^0$ of the \'etale groupoid $G_{{\frak L}_i}, i=1,2.$
\item
There exists an isomorphism
$\Phi:{\mathcal{O}}_{{\frak L}_1}\longrightarrow {\mathcal{O}}_{{\frak L}_2}
$ of $C^*$-algebras
such that  
$\Phi({\mathcal{D}}_{\Lambda_1})={\mathcal{D}}_{\Lambda_2}.
$ 
\end{enumerate}
\end{theorem}
There exists a natural cocycle function
$c_{\frak L}:G_{\frak L}\longrightarrow \Z$
of the \'etale groupoid $G_{\frak L}$.
Eventual conjugacy of one-sided topological Markov shifts 
will be generalized to one-sided subshifts with $\lambda$-graph systems as 
$({\frak L}_1,{\frak L}_2)$-eventual conjugacy, 
so that we will prove the following theorem. 
\begin{theorem}\label{thm:eventconj}
Let ${\frak L}_1$ and 
 ${\frak L}_2$ be left-resolving $\lambda$-graph systems satisfying condition (I).
Let 
$(X_{\Lambda_1},\sigma_{\Lambda_1})$ and 
$(X_{\Lambda_2},\sigma_{\Lambda_2})$
be the one-sided subshifts presented by  ${\frak L}_1$ and 
 ${\frak L}_2$, respectively.
Then the following  three assertions are equivalent:
\begin{enumerate}
\renewcommand{\theenumi}{\roman{enumi}}
\renewcommand{\labelenumi}{\textup{(\theenumi)}}
\item One-sided subshifts 
$(X_{\Lambda_1},\sigma_{\Lambda_1})$
and
$(X_{\Lambda_2},\sigma_{\Lambda_2})$
are 
$({\frak L}_1,{\frak L}_2)$-eventually conjugate.
\item
There exist an isomorphism
$\varphi:G_{{\frak L}_1}\longrightarrow G_{{\frak L}_2}$
of \'etale groupoids and a homeomorphism
$h:X_{\Lambda_1}\longrightarrow X_{\Lambda_2}$
of the shift spaces such that
$\pi_{{\frak L}_2} \circ\varphi|_{X_{{\frak L}_1}} = h\circ\pi_{{\frak L}_1}$
and
$c_{{\frak L}_2} \circ \varphi = c_{{\frak L}_1}$. 
\item
There exists an isomorphism
$\Phi:{\mathcal{O}}_{{\frak L}_1}\longrightarrow {\mathcal{O}}_{{\frak L}_2}
$ 
of $C^*$-algebras such that 
\begin{equation*}
\Phi({\mathcal{D}}_{{\Lambda}_1})={\mathcal{D}}_{{\Lambda}_2}
\quad
\text{ and }
\quad
\Phi \circ \rho_t^{{\frak L}_1} =\rho_t^{{\frak L}_2}\circ \Phi,
\quad
t \in \T.
\end{equation*} 
\end{enumerate}
\end{theorem}
For a left-resolving $\lambda$-graph system
${\frak L} =(V,E,\lambda,\iota)$ over $\Sigma$,
we will construct
its stabilization
$\widetilde{\frak L} =(\widetilde{V},\widetilde{E}, \tilde{\lambda},\tilde{\iota})$
and its groupoid $G_{\widetilde{\frak L}}$
such that 
its groupoid $C^*$-algebra
$C^*(G_{\widetilde{\frak L}})$ which is written
${\mathcal{O}}_{\widetilde{\frak L}}$, 
its canonical maximal abelian $C^*$-subalgebra
${\mathcal{D}}_{\widetilde{\frak L}}$,
and its gauge action
$\rho^{\widetilde{\frak L}}$
are isomorphic to
$\OFL\otimes\K$,
$\DFL\otimes\C$,
and
$\rho^{\frak L}\otimes\id$,
respectively.
The idea of the construction of 
$\widetilde{\frak L}$ is essentially due to the Tomforde's construction 
for directed graphs in 
\cite{Tomforde2}.
We will introduce a notion of  $({\frak L}_1,{\frak L}_2)$-conjugacy 
between two-sided subshifts $\Lambda_1$ and $\Lambda_2$,
that is regarded as a topological conjugacy 
between $\Lambda_1$ and $\Lambda_2$
relative to the factor maps 
$\bar{\pi}_{{\frak L}_1}: \bar{X}_{{\frak L}_1} \longrightarrow \Lambda_1$ and
$\bar{\pi}_{{\frak L}_2}: \bar{X}_{{\frak L}_2} \longrightarrow \Lambda_2$,
where  $\bar{\pi}_{{\frak L}_i}, i=1,2,$ is the two-sided extensions of
${\pi}_{{\frak L}_i}, i=1,2$.
We will show the following theorem  concerning two-sided subshifts.
\begin{theorem}[Proposition \ref{prop:6.2} and 
Theorem \ref{thm:twosidedconjugacy1}] \label{thm:1.4}
Let ${\frak L}_1$ and  ${\frak L}_2$ be left-resolving $\lambda$-graph systems satisfying condition (I).
Let
$(\Lambda_1,\sigma_1)$ and $(\Lambda_2,\sigma_2)$
be their two-sided subshifts presented by 
${\frak L}_1$ and ${\frak L}_2$, respectively.
Then the following three assertions are equivalent:
 \begin{enumerate}
\renewcommand{\theenumi}{\roman{enumi}}
\renewcommand{\labelenumi}{\textup{(\theenumi)}}
\item
The two-sided subshifts 
$(\Lambda_1, \sigma_1)$ and 
$(\Lambda_2, \sigma_2)$
are $({\frak L}_1,{\frak L}_2)$-conjugate.
\item
There exist an isomorphism
$\tilde{\varphi}: G_{\widetilde{\frak L}_1}\longrightarrow 
G_{\widetilde{\frak L}_2}
$ of \'etale groupoids 
and a homeomorphism
$\tilde{h} :\widetilde{X}_{\Lambda_1} \longrightarrow \widetilde{X}_{\Lambda_2}
$
such that 
$\tilde{\pi}_{{\frak L}_2} 
\circ \tilde{\varphi}|_{G_{\widetilde{\frak L}_1}^0} = 
\tilde{h}\circ \tilde{\pi}_{{\frak L}_1}
$ 
and 
$c_{\widetilde{\frak L}_2}\circ\tilde{\varphi} =
c_{\widetilde{\frak L}_1},$
where $\widetilde{X}_{\Lambda_i}$ is the stabilization of 
$X_{\Lambda_i}, i=1,2$.  
\item
There exists an isomorphism
$\widetilde{\Phi}:{\mathcal{O}}_{{\frak L}_1}\otimes\K
\longrightarrow 
{\mathcal{O}}_{{\frak L}_2}\otimes\K
$ of $C^*$-algebras
such that 
\begin{equation*}
\widetilde{\Phi}({\mathcal{D}}_{{\Lambda}_1}\otimes\C)
={\mathcal{D}}_{{\Lambda}_2}\otimes\C, \qquad
\widetilde{\Phi}\circ (\rho^{{\frak L}_1}_t\otimes\id)
 =(\rho^{{\frak L}_2}_t\otimes\id)\circ \widetilde{\Phi},\quad t \in \T.
\end{equation*} 
\end{enumerate}
\end{theorem}

\medskip

Throughout the paper, 
we denote by
$\Zp$ the set of nonnegative integers
and by 
$\N$ the set of positive integers,
respectively.

\section{Preliminaries}
Let $\Sigma$ be a finite set, which we call an alphabet. 
Let ${\frak L} =(V,E,\lambda,\iota)$ 
be 
a $\lambda$-graph system over $\Sigma$ with vertex set
$
V = \bigcup_{l=0}^\infty V_{l}
$
and  edge set
$
E = \bigcup_{l=0}^\infty E_{l,l+1}
$
that is labeled with symbols in $\Sigma$ by $\lambda: E \rightarrow \Sigma$, 
and that is supplied with  surjective maps
$
\iota( = \iota_{l,l+1}):V_{l+1} \rightarrow V_l
$
for
$
l \in  \Zp.
$
Here the vertex sets $V_{l},l \in \Zp$
are finite disjoint sets,
as well as   
$E_{l,l+1},l \in \Zp$
are finite disjoint sets.
An edge $e$ in $E_{l,l+1}$ has its source vertex $s(e)$ in $V_{l}$ 
and its terminal  vertex $t(e)$ 
in
$V_{l+1}$
respectively.
Every vertex in $V$ has a successor and  every 
vertex in $V_l$ for $l\in \N$ has a predecessor. 
It is then required for definition of $\lambda$-graph system that there exists an edge in $E_{l,l+1}$
with label $\alpha$ and its terminal is  $v \in V_{l+1}$
 if and only if 
 there exists an edge in $E_{l-1,l}$
with label $\alpha$ and its terminal is $\iota(v) \in V_{l}.$
For 
$u \in V_{l-1}$ and
$v \in V_{l+1},$
we put
\begin{align*}
E^{\iota}(u, v)
& = \{e \in E_{l,l+1} \ | \ \iota(s(e)) = u, \, t(e) = v \},\\
E_{\iota}(u, v)
& = \{e \in E_{l-1,l} \ | \ s(e) = u, \, t(e) = \iota(v) \}.
\end{align*}
As a key hypothesis for ${\frak L}$ to be a $\lambda$-graph system, 
we require the condition 
that there exists a bijective correspondence between 
$
E^{\iota}(u, v)
$
and
$
E_{\iota}(u, v)
$
that preserves labels
for each pair $(u,v) \in V_{l-1}\times V_{l+1}$ 
of vertices.
We call this property  the local property of $\lambda$-graph system. 
A $\lambda$-graph system ${\frak L}$ 
is said to be left-resolving if $e,f \in E$ with
$t(e) = t(f)$ and $\lambda(e) = \lambda(f)$ implies $e=f$.
In what follows all $\lambda$-graph systems are assumed to be left-resolving.

Let us recall the construction of the topological dynamical system
$(X_{\frak L},\sigma_{\frak L})$,
the \'etale groupoid $G_{\frak L}$
and the $C^*$-algebra $\OFL$
by following \cite{MaDocMath2002}.
Let $\Omega_{\frak L}$
be  the projective limit of the system
$\iota_{l,l+1}: V_{l+1}\rightarrow V_l, l \in \Zp,$ that is defined by
\begin{equation*}
\Omega_{\frak L} = \{ (u^l)_{l \in \Zp}\in 
\prod_{l \in \Zp} V_l
 \ | \ \iota_{l,l+1}(u^{l+1}) = u^l, l\in \Zp \}.
\end{equation*}
We endow $\Omega_{\frak L}$  with the projective limit topology 
so that it is a compact Hausdorff space.
An element $v$ in $\Omega_{\frak L}$
is called  an $\iota$-orbit
or also a vertex.
Let $E_{\frak L}$ 
be the set of all triplets 
$(u, \alpha,w) \in \Omega_{\frak L} \times \Sigma \times \Omega_{\frak L}$
such that
for each $l \in \Zp$, 
there exists $e_{l,l+1} \in E_{l,l+1}$ satisfying
\begin{equation*}
u^l = s(e_{l,l+1}),\quad
  w^{l+1} = t(e_{l,l+1}) \quad \text{ and } \quad
  \alpha = \lambda(e_{l,l+1})
\end{equation*}
where $u = (u^l)_{l \in \Zp}, w = (w^l)_{l \in \Zp}\in \Omega_{\frak L}.$
The set
$
E_{\frak L}\subset \Omega_{\frak L} \times \Sigma \times \Omega_{\frak L}
$
becomes a  zero-dimensional continuous graph
in the sense of V. Deaconu
(\cite[Proposition 2.1]{MaDocMath2002}, cf. \cite{De}, \cite{De2}).

Let us denote by
$\{{v}_1^l,\dots,{v}_{m(l)}^l \}$
the vertex set
$V_l.$ 
Define a clopen set $U_i^1(\alpha)$ in $E_{\frak L}$ 
for 
$\alpha \in \Sigma,i=1,\dots,m(1)$
by 
\begin{equation*}
U_i^1(\alpha) = 
\{(u,\alpha,w) \in E_{\frak L} \mid 
 w^1 = {v}_i^1
 \text{ where }
 w = (w^l)_{l\in \Zp} \in \Omega_{\frak L} \}
\end{equation*}
so that
\begin{equation*}
\bigcup_{\alpha \in \Sigma}\bigcup_{i=1}^{m(1)}U_i^1(\alpha) = E_{\frak L},
\qquad
U_i^1(\alpha) \bigcap U_j^1(\beta) = \emptyset
\quad
\text{ if } \
(\alpha,i) \ne (\beta,j).
\end{equation*}
Put
$t_{\frak L}(u,\alpha,w) = w$ for 
$(u,\alpha,w) \in E_{\frak L}.$
Since
${\frak L}$ is left-resolving,
the restriction of $t_{\frak L}$ to $U_i^1(\alpha)$
is a homeomorphism onto 
$
U_{ {v}_i^1} 
= \{ (w^l)_{l \in \Zp} \in  \Omega_{\frak L} \mid w^1 = {v}_i^1 \}
$
if $U_i^1(\alpha) \ne \emptyset$.
Hence
$t_{\frak L}:E_{\frak L} \rightarrow \Omega_{\frak L}$
is a local homeomorphism.
Let $X_{\frak L}$ 
 be the set of all one-sided paths of $E_{\frak L}$:
\begin{align*}
X_{\frak L}= \{ (\alpha_i,u_i)_{i\in\N} \in 
\prod_{i\in\N}
(\Sigma \times \Omega_{\frak L}) \ | \
& (u_{0},\alpha_1,u_{1}) \in E_{\frak L}
 \text{ for some } u_0 \in \Omega_{\frak L}
 \\ 
 \text{ and } 
& (u_{i},\alpha_{i+1},u_{i+1}) \in E_{\frak L} 
 \text{ for all } i\in \N \}.
\end{align*}
The set
$X_{\frak L}$ has the relative topology from the infinite product topology
of $\Pi_{i\in \N}(\Sigma \times \Omega_{\frak L})$. 
It is a zero-dimensional compact Hausdorff space.   
For $\mu=(\mu_1,\dots,\mu_k) \in \Sigma^k, v_i^l \in V_l$
with $k\le l$,
define a subset $U(\mu,v_i^l)$ of $X_{\frak L}$ by 
\begin{equation}
U(\mu,v_i^l) =\{
(\alpha_i,u_i)_{i\in\N} \in X_{\frak L}\mid
\alpha_1=\mu_1,\dots,\alpha_k=\mu_k, \, u_k^l = v_i^l\} \label{eq:muv}
\end{equation}
where 
$u_i =(u_i^l)_{l\in \N} \in \Omega_{\frak L}$.
The set
$U(\mu,v_i^l)$ is clopen and such family generate the topology of $X_{\frak L}$.

The shift map 
$
\sigma_{\frak L} :(\alpha_i,u_i)_{i\in\N}\in X_{\frak L} 
\rightarrow 
(\alpha_{i+1},u_{i+1})_{i\in\N}\in
X_{\frak L}
$
is continuous.
For $v = (v^l)_{l \in \Zp} \in \Omega_{\frak L}$
and $ \alpha \in \Sigma $,
the local  property of $\lambda$-graph system $\frak L$ ensures  that if 
there exists $e_{0,1} \in E_{0,1}$ satisfying
$
v^1 = t(e_{0,1}), \alpha = \lambda(e_{0,1}),
$
there exist $e_{l,l+1} \in E_{l,l+1}$ and 
$u =(u^l)_{l\in \Zp} \in \Omega_{\frak L}$
 satisfying
$u^l = s(e_{l,l+1}),
  v^{l+1} = t(e_{l,l+1}),
    \alpha = \lambda(e_{l,l+1})
$
for each $l \in \Zp.$
Hence 
for any $x = (\alpha_i,v_i)_{i\in\N} \in X_{\frak L}$,
there uniquely exists $v_0 \in \Omega_{\frak L}$ such that
$(v_0,\alpha_1,v_1) \in E_{\frak L}$.
Denote by 
$v(x)_0$ 
the  unique vertex $v_0$ for $x \in X_{\frak L}.$  
We are always assuming that 
the $\lambda$-graph system ${\frak L}$ is left-resolving,
so that 
 $\sigma_{\frak L}$ is a local homeomorphism on $X_{\frak L}$
(\cite[Lemma 2.2]{MaDocMath2002}).
 Define 
$\pi_{\frak L}:(\alpha_i,u_i)_{i\in\N}\in X_{\frak L}
\longrightarrow 
(\alpha_i)_{i\in \N} \in \Sigma^\N.
$
The image
$\pi_{\frak L}(X_{\frak L})$ in $\Sigma^\N$
is denoted by 
$X_\Lambda$,
which is the shift space of the one-sided subshift
denoted by $(X_\Lambda,\sigma_\Lambda)$
with shift transformation
$\sigma_\Lambda((\alpha_i)_{i\in \N}) =(\alpha_{i+1})_{i\in \N}.
$ 
Hence we have 
$\pi_{\frak L}\circ\sigma_{\frak L}
= \sigma_\Lambda\circ\pi_{\frak L}.$

Let us construct the $C^*$-algebra
 $\OFL$ following \cite{MaDocMath2002}.
Following V. Deaconu \cite{De}, \cite{De2}
and J. Renault \cite{Renault}, \cite{Renault2}, \cite{Renault3}, 
one may construct a locally compact \'etale groupoid 
$G_{\frak L}$, called a Renault--Deaconu groupoid,  from a local  homeomorphism 
$\sigma_{\frak L}$ on $X_{\frak L}$
 as in the following way.
We put
\begin{equation*}
G_{\frak L} = 
\{ (x,n,z) \in X_{\frak L} \times {\Bbb Z} \times X_{\frak L} \ | \
\text{ there exist } k,l\in \Zp ; \ \sigma^k_{\frak L}(x) = \sigma^l_{\frak L}(z), 
n=k-l \}.
\end{equation*}
The range map and the domain  map are defined by
$$
r(x,n,z) = x,\qquad d(x,n,z) =z
\quad
\text{ for }
\quad
(x,n,z) \in G_{\frak L}.
$$
The multiplication and the inverse operation are defined by
$$
(x,n,z)(z,m,w) = (x,n+m,w),\qquad
(x,n,z)^{-1} = (z,-n,x).
$$
The unit space
$
G_{\frak L}^0 
= \{ (x,0,x) \in G_{\frak L} \ | \ x \in X_{\frak L} \}
$
is identified with  the space
$
 X_{\frak L}
$
through the map
$x \in X_{\frak L}\longrightarrow 
(x,0,x) \in G_{\frak L}^0.
$
A basis of open sets of $G_{\frak L}$ is given by
$$
Z(U,k,l,V) 
= \{ (x,k-l,z) \in G_{\frak L} \ | \  x \in U, z \in V, 
\sigma^k_{\frak L}(x) = \sigma^l_{\frak L}(z) \}
$$
where
$U,V$ are open sets of $X_{\frak L}$,
 and $k,l$ 
are nonnegative integers such that 
$\sigma^k|_{U}$ and $\sigma^l|_{V}$ 
are homeomorphisms with the same open range.
The groupoid $C^*$-algebra $C^*(G_{\frak L})$ 
for the groupoid $G_{\frak L}$ is defined as in the following way 
(\cite{Renault}, \cite{Renault2}, \cite{Renault3}, cf. \cite{De}, \cite{De2}).
Let $C_c(G_{\frak L}) $ be the set of all continuous functions on
$G_{\frak L}$ with compact support that has a natural product structure and $*$-involution of 
$*$-algebra given by
\begin{align*}
(f*g)(s) &  = 
 \sum_{t \in G_{\frak L}, \ r(t) = r(s)} f(t) g(t^{-1}s) 
           = 
 \sum_{t_1,t_2  \in G_{\frak L},\ s = t_1 t_2} f(t_1) g(t_2), \\
  f^*(s) & = \overline{f(s^{-1})}, 
  \qquad f,g \in C_c(G_{\frak L}), \quad s \in G_{\frak L}.     
\end{align*}
Let $C_0(G_{\frak L}^0) $ be the $C^*$-algebra  of all continuous functions on
$G_{\frak L}^0$ that vanish at infinity. 
The algebra
$C_c(G_{\frak L}) $
is a 
$C_0(G_{\frak L}^0) $-right module, endowed with 
a $C_0(G_{\frak L}^0) $-valued inner product defined by
\begin{align}
 (\xi f )(x,n,z) 
 = & \xi(x,n,z)f(z), 
 \qquad
   \xi \in C_c(G_{\frak L}), \quad f \in C_0(G_{\frak L}^0),
   \quad (x,n,z) \in G_{\frak L},
 \\
  < \xi, \eta >(z) 
 = & \sum_{ 
 { (x,n,z) \in G_{\frak L} }}
   \overline{ \xi (x,n,z)} \eta (x,n,z),
   \qquad
   \xi,\eta \in C_c(G_{\frak L}), \quad z \in X_{\frak L}.
 \end{align}
 Let us denote by $l^2(G_{\frak L})$ the completion of the inner product 
$C_0(G_{\frak L}^0) $-right module
$C_c(G_{\frak L})$.
It is a Hilbert $C^*$-right module over the commutative $C^*$-algebra 
$C_0(G_{\frak L}^0) $.
We denote by
$B(l^2(G_{\frak L}))$
the $C^*$-algebra of all bounded adjointable
$C_0(G_{\frak L}^0) $-module maps on
$l^2(G_{\frak L}).$
Let $\pi $ be the $*$-homomorphism of 
$C_c(G_{\frak L})$ into
$B(l^2(G_{\frak L}))$
defined by
$\pi (f)\xi = f * \xi$
for
$f, \xi \in 
C_c(G_{\frak L}).$
Then the closure of $\pi (C_c(G_{\frak L}))$ in
$B(l^2(G_{\frak L}))$
is called the (reduced) $C^*$-algebra of the groupoid $G_{\frak L}$,
that we denote by
$C^*_r(G_{\frak L}).$
General theory of $C^*$-algebras of groupoids says that 
for a Renault--Deaconu groupoid $G$, 
the reduced $C^*$-algebra $C^*_r(G)$ and
the universal $C^*$-algebra $C^*(G)$ 
may be identified, as they are isomorphic,
see for instance (\cite[Proposition 2.4]{Renault2000}).
\begin{definition}[{\cite{MaDocMath2002}}] 
The $C^*$-algebra $\OFL$ associated with  $\lambda$-graph system 
$\frak L$ is defined to be the  $C^*$-algebra 
$C^*_r(G_{\frak L})$
of the groupoid $G_{\frak L}.$
\end{definition}
We will describe the algebraic structure of the $C^*$-algebra $\OFL$. 
Let us denote by $X_\Lambda$ 
the one-sided subshift presented by 
${\frak L}$.
Recall that $B_k(X_\Lambda)$ 
denotes the set of all words of $\Sigma^k$ that appear in $\Lambda$.
For 
$x = (\alpha_n,u_n)_{n\in \N} \in X_{\frak L},$
we put
$\lambda(x)_n = \alpha_n \in \Sigma,$
$v(x)_n = u_n \in \Omega_{\frak L}$
for $n \in \N,$ 
respectively.
The $\iota$-orbit
$v(x)_n$ is written as
$v(x)_n = {(v(x)^l_n)}_{l\in \Zp} 
\in \Omega_{\frak L}.$
Now $\frak L$ is left-resolving
so that there uniquely exists 
$v(x)_0 \in \Omega_{\frak L}$ satisfying
$(v(x)_0,\alpha_1,u_1) \in E_{\frak L}.$
Define 
$U(\mu)\subset G_{\frak L}$ 
for 
$\mu = (\mu_1,\dots,\mu_k) \in B_k(X_\Lambda),$
and
$U(v_i^l)\subset G_{\frak L}$
for $v_i^l \in V_l$ by
\begin{gather*}
 U(\mu) 
 = \{ (x,k,z) \in G_{\frak L} \mid
 \sigma_{\frak L}^k(x) = z,   
\lambda(x)_1 =\mu_1,\dots,\lambda(x)_k =\mu_k \}, \quad \text{ and}\\
 U({v}_i^l) 
 = \{ (x,0,x) \in G_{\frak L} \ | \ v(x)_0^l = {v}_i^l\}
\end{gather*}
where
$v(x)_0 = (v(x)^l_0)_{l\in \Zp}\in \Omega_{\frak L}.$
They are clopen sets of $G_{\frak L}$.
We set
$$
S_{\mu} =\pi( \chi_{U(\mu)} ),\qquad E_i^l = \pi(\chi_{U({v}_i^l) })
\qquad \text{ in }\quad \pi(C_c(G_{\frak L}))
$$
where 
$
\chi_{F}
\in C_c(G_{\frak L})
$ denotes 
the characteristic function of a clopen set
$F$
on the space
$
G_{\frak L}.
$
We in particular write $S_\mu$ as $S_\alpha$
for the symbol $\mu = \alpha \in \Sigma$.
For $\mu =(\mu_1,\dots,\mu_k), \nu=(\nu_1,\dots,\nu_m)\in B_*(X_\Lambda)$
and
$v_i^l\in V_l$
with $k,m\le l$,
put the clopen set of $G_{\frak L}$ by 
\begin{align*}
U(\mu,v_i^l,\nu)
=\{ & (x,n,z) \in G_{\frak L}\mid  
n = k-m, \lambda(x)_{[1,k]} =\mu, \lambda(z)_{[1,m]} = \nu,\, \\
& \sigma_{\frak L}^k(x) = \sigma_{\frak L}^m(z), \, v(x)_k^l = v(z)_m^l =v_i^l\}
\end{align*}
where 
$x=(\lambda(x)_i,v(x)_i)_{i\in \N}, z=(\lambda(z)_i,v(z)_i)_{i\in \N}
\in X_{\frak L}$
with
$
v(x)_i =(v(x)_i^l)_{l\in \Zp},
v(z)_i =(v(z)_i^l)_{l\in \Zp}\in \Omega_{\frak L}
$
and
$\lambda(x)_{[1,k]} =(\lambda(x)_1,\dots,\lambda(x)_k) \in B_k(X_\Lambda).$
Then we have 
\begin{equation*}
S_\mu E_i^l S_\nu^* = \pi(\chi_{U(\mu,v_i^l,\nu)}).
\end{equation*}
In particular, for the clopen set $U(\mu,v_i^l)$ with
$\mu \in (\mu_1,\cdots,\mu_k)\in B_k(X_\Lambda), v_i^l \in V_l$
defined in 
\eqref{eq:muv},
we know that 
\begin{equation}
S_\mu E_i^l S_\mu^* = \pi(\chi_{U(\mu,v_i^l)}). \label{eq:smueilsmu}
\end{equation}
The transition matrices $A_{l,l+1}, I_{l,l+1}$
for  ${\frak{L}}$
are defined by setting
\begin{align*}
A_{l,l+1}(i,\alpha,j)
 & =
{\begin{cases}
1 &  
    \text{ if there exists } e \in E_{l,l+1}; 
 \ s(e) = v_i^l, \lambda(e) = \alpha,
                       t(e) = v_j^{l+1}, \\
0           & \text{ otherwise,}
\end{cases}} \\
I_{l,l+1}(i,j)
 & =
{\begin{cases}
1 &  
    \text{ if } \ \iota_{l,l+1}(v_j^{l+1}) = v_i^l, \\
0           & \text{ otherwise}
\end{cases}} 
\end{align*} 
for
$
i=1,2,\dots,m(l),\ j=1,2,\dots,m(l+1), \ \alpha \in \Sigma.
$ 
We say that ${\frak{L}}$ satisfies condition (I) if for each
$v\in V_l,$
the subset $\Gamma_\infty^{+}(v)$ of  $X_\Lambda$
defined by
\begin{align*}
\Gamma_\infty^{+}(v) 
=\{ (\alpha_1,\alpha_2,\dots, ) \in \Sigma^{\N} &  | 
 \text{ there exists } e_{n,n+1} \in E_{n,n+1} 
 \text{ for } n \ge l ; \\
 v =  s(e_{l,l+1}),\ &
 t(e_{n,n+1})  = s(e_{n+1,n+2}), \ 
 \lambda(e_{n,n+1}) = \alpha_{n-l+1} \}
\end{align*}
contains at least two distinct sequences (\cite{MaDocMath2002}).
\begin{proposition}[{\cite[Theorem 3.6, Theorem 4.3]{MaDocMath2002}}]
Let ${\frak{L}}$ be a left-resolving $\lambda$-graph system.
The $C^*$-algebra $\OFL$
is a universal unital $C^*$-algebra
generated by
partial isometries
$S_{\alpha}$ for $\alpha \in \Sigma$
and projections
$E_i^l$ for $v_i^l \in V_l 
$
 subject to the  following relations called $({\frak{L}})$:
\begin{gather*}
\sum_{\beta \in \Sigma} S_{\beta}S_{\beta}^* 
= \sum_{i=1}^{m(l)} E_i^l   =1,  
\qquad S_\alpha S_\alpha^* E_i^l  =   E_i^{l} S_\alpha S_\alpha^* \\
S_{\alpha}^*E_i^l S_{\alpha}  =  
\sum_{j=1}^{m(l+1)} A_{l,l+1}(i,\alpha,j)E_j^{l+1},\qquad
 E_i^l  =   \sum_{j=1}^{m(l+1)}I_{l,l+1}(i,j)E_j^{l+1} 
\end{gather*}
for $\alpha \in \Sigma,$
$i=1,2,\dots,m(l),\l\in \Zp. $
\end{proposition}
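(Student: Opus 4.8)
The plan is to realize $\OFL$ as a universal $C^*$-algebra in the spirit of Cuntz--Krieger, by producing a system of generators inside $C^*(G_{\frak L})$ satisfying the relations $({\frak L})$, then invoking a uniqueness argument for groupoid $C^*$-algebras of essentially principal étale groupoids satisfying condition (I). First I would verify that the elements $S_\alpha = \pi(\chi_{U(\alpha)})$ and $E_i^l = \pi(\chi_{U(v_i^l)})$ actually satisfy the relations $({\frak L})$. This is a computation with characteristic functions of clopen bisections in the groupoid: using the convolution formula for $C_c(G_{\frak L})$, one checks that $\chi_{U(\mu)} * \chi_{U(\nu)} = \chi_{U(\mu\nu)}$ when $\mu\nu \in B_*(X_\Lambda)$ and $0$ otherwise, that $\chi_{U(\mu)}^* = \chi_{U(\mu)^{-1}}$, and that $\chi_{U(v_i^l)}$ is a projection. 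The relation $\sum_\beta S_\beta S_\beta^* = 1$ comes from the fact that the sets $U(\beta)U(\beta)^{-1} = \{(x,0,x) : \lambda(x)_1 = \beta\}$ partition the unit space as $\beta$ ranges over $\Sigma$ (every $x \in X_{\frak L}$ has a first label); $\sum_{i=1}^{m(l)} E_i^l = 1$ comes from the fact that $V_l = \{v_1^l,\dots,v_{m(l)}^l\}$ and every $x$ has a well-defined vertex $v(x)_0^l$; the commutation relation $S_\alpha S_\alpha^* E_i^l = E_i^l S_\alpha S_\alpha^*$ holds because both are characteristic functions of clopen subsets of the unit space, which is abelian. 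The two remaining relations, $S_\alpha^* E_i^l S_\alpha = \sum_j A_{l,l+1}(i,\alpha,j) E_j^{l+1}$ and $E_i^l = \sum_j I_{l,l+1}(i,j) E_j^{l+1}$, translate directly into the combinatorics of the edge set $E_{l,l+1}$ and the map $\iota_{l,l+1}$ via the local property of ${\frak L}$; I would spell these out at the level of the clopen sets $U(\mu,v_i^l,\nu)$ using the identity $S_\mu E_i^l S_\nu^* = \chi_{U(\mu,v_i^l,\nu)}$ recorded above.

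Next, I would invoke the universal $C^*$-algebra $\mathcal{O}_{\frak L}^{\mathrm{univ}}$ defined abstractly by generators $s_\alpha, e_i^l$ and the relations $({\frak L})$ --- this exists because the relations are bounded (each $s_\alpha$ is forced to be a partial isometry, hence of norm $\le 1$). By the universal property there is a surjective $*$-homomorphism $\Phi : \mathcal{O}_{\frak L}^{\mathrm{univ}} \twoheadrightarrow C^*(G_{\frak L}) = \OFL$ sending $s_\alpha \mapsto S_\alpha$, $e_i^l \mapsto E_i^l$; surjectivity follows because the $S_\mu E_i^l S_\nu^*$ span a dense subalgebra of $C^*(G_{\frak L})$ (their linear span is a $*$-subalgebra of $C_c(G_{\frak L})$ containing enough characteristic functions of a basis of compact open bisections to separate points, hence dense by Stone--Weierstrass-type arguments for groupoid algebras). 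It remains to prove injectivity of $\Phi$. For uniqueness I would use the gauge action: $\mathcal{O}_{\frak L}^{\mathrm{univ}}$ carries a $\T$-action $\gamma_t$ with $\gamma_t(s_\alpha) = t s_\alpha$, $\gamma_t(e_i^l) = e_i^l$, intertwined by $\Phi$ with the gauge action $\rho^{\frak L}$ on $C^*(G_{\frak L})$ coming from the cocycle $(x,n,z)\mapsto n$. A gauge-invariant uniqueness theorem then reduces injectivity to injectivity on the fixed-point algebra $(\mathcal{O}_{\frak L}^{\mathrm{univ}})^\gamma$, which is an AF-algebra that is an inductive limit of finite-dimensional algebras whose structure is governed by the matrices $A_{l,l+1}$ and $I_{l,l+1}$; one checks directly that $\Phi$ is injective on each finite stage because the projections $S_\mu E_i^l S_\mu^*$ are nonzero in $C^*(G_{\frak L})$ exactly when the corresponding clopen set $U(\mu,v_i^l)$ is nonempty, which is precisely the condition controlling nonvanishing in the AF-core. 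Condition (I) enters to guarantee that the groupoid $G_{\frak L}$ is topologically principal (the sets $\Gamma_\infty^+(v)$ having two distinct elements forces aperiodicity), so that the reduced and full groupoid $C^*$-algebras agree and the gauge-invariant uniqueness theorem applies; this is the role it plays in \cite[Theorem 3.6]{MaDocMath2002}.

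The main obstacle, I expect, is not the verification of the relations (routine bisection bookkeeping) but establishing the faithfulness on the AF-core precisely. Concretely, one must identify $(\mathcal{O}_{\frak L}^{\mathrm{univ}})^\gamma$ as the inductive limit $\varinjlim_k \mathcal{A}_k$ where $\mathcal{A}_k$ is generated by $\{S_\mu E_i^l S_\nu^* : |\mu| = |\nu| = k,\ l \ge k\}$, show that each $\mathcal{A}_k$ is finite-dimensional with matrix units indexed by pairs of admissible paths through a common vertex, and then show that $\Phi$ maps a full set of matrix units to linearly independent (indeed orthogonal, appropriately) elements of $C^*(G_{\frak L})$ --- for which one uses that distinct clopen sets $U(\mu,v_i^l,\nu)$ give rise to distinct, genuinely nonzero elements of $C_c(G_{\frak L})$, together with the faithfulness of the canonical conditional expectation $C^*(G_{\frak L}) \to C_0(G_{\frak L}^0)$. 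Once faithfulness on the core is in hand, a standard averaging argument over the gauge action (the conditional expectation $E^\gamma(a) = \int_\T \gamma_t(a)\,dt$ on the universal side matched with the one on $C^*(G_{\frak L})$) upgrades it to faithfulness of $\Phi$ on all of $\mathcal{O}_{\frak L}^{\mathrm{univ}}$, completing the proof that $\OFL$ is the universal $C^*$-algebra on $({\frak L})$. I would also remark that the unitality of $\OFL$ follows from the relation $\sum_i E_i^0 = 1$ with $m(0) = 1$, so $E_1^0$ is the unit.
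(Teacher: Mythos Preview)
The paper does not prove this proposition; it is quoted verbatim as a citation of \cite[Theorem~3.6]{MaDocMath2002}, so there is no in-paper argument to compare against. Your outline is a reasonable reconstruction of how such results are proved and would, with care, establish that $C^*(G_{\frak L})$ coincides with the universal $C^*$-algebra on the relations~$({\frak L})$.

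That said, your account of the role of condition~(I) is tangled. You write that condition~(I) makes $G_{\frak L}$ topologically principal ``so that the reduced and full groupoid $C^*$-algebras agree and the gauge-invariant uniqueness theorem applies.'' Neither clause is right. Coincidence of the full and reduced $C^*$-algebras comes from \emph{amenability} of $G_{\frak L}$, which for a Deaconu--Renault groupoid of a local homeomorphism on a compact space holds automatically (it is an extension of an AF groupoid by $\Z$), independently of condition~(I). And the gauge-invariant uniqueness argument you sketch --- matching conditional expectations onto the AF core and checking injectivity there --- also requires no hypothesis like~(I); it would identify $\mathcal{O}_{\frak L}^{\mathrm{univ}}$ with $C^*(G_{\frak L})$ in general.

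What condition~(I) actually buys is the word \emph{unique} in the statement, understood in the Cuntz--Krieger sense: any $C^*$-algebra generated by nonzero partial isometries and projections satisfying $({\frak L})$ is canonically isomorphic to $\OFL$, without assuming a compatible gauge action. This follows because condition~(I) forces $G_{\frak L}$ to be topologically principal (essentially free), so the canonical expectation $C^*(G_{\frak L})\to C(X_{\frak L})$ is faithful and any $*$-homomorphism that is injective on $\DFL$ is injective on $\OFL$. Your sketch proves universality but stops short of this stronger uniqueness; to match the proposition you should add the standard Cuntz--Krieger step: given an arbitrary family $\{s_\alpha, e_i^l\}$ satisfying $({\frak L})$ with all $e_i^l\ne 0$, use condition~(I) to build an approximate diagonalizing unitary (or invoke Renault's result that representations of topologically principal groupoid algebras faithful on $C_0(G^{0})$ are faithful) and conclude that the induced map from $\OFL$ is injective. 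The remark about $m(0)=1$ at the end is not generally true for $\lambda$-graph systems; unitality follows instead from $\sum_{i=1}^{m(l)} E_i^l = 1$ for each $l$, with $X_{\frak L}$ compact.
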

 If in particular ${\frak{L}}$ satisfies condition (I),
 then any non-zero generators satisfying the above relations $({\frak{L}})$
generate an isomorphic copy of $\OFL$.
Hence $\OFL$ is a unique $C^*$-algebra subject to the relations $({\frak{L}})$
if ${\frak{L}}$ satisfies condition (I).

If $\frak{L}$ satisfies condition (I) and some irreducible condition called 
$\lambda$-irreducibility,   the $C^*$-algebra $\OFL$ is simple and purely infinite (\cite{MaMS2005}).
It is nuclear and belongs to the UCT class (\cite[Proposition 5.6]{MaDocMath2002}).
By the above relation $({\frak{L}})$, we know that 
the algebra of all finite linear combinations of the elements of the form
$$
S_{\mu}E_i^lS_{\nu}^* \quad \text{ for }
\quad \mu,\nu \in B_*(X_\Lambda), \quad i=1,\dots,m(l),\quad l \in \Zp
$$
forms a dense $*$-subalgebra of  $\OFL$.
Let us  denote by
$\DFL$ 
the $C^*$-subalgebra of $\OFL$
generated by the projections of the form
$S_\mu E_i^l S_\mu^*, i=1,\dots, m(l),\/ l \in \Zp, \, \mu\in B_*(X_\Lambda)$.
By \eqref{eq:smueilsmu}, we know that the algebra
$\DFL$ is canonically isomorphic to the commutative $C^*$-algebra 
$C(X_{\frak L})$ of continuous functions on $X_{\frak L}$.

\begin{definition}
A left-resolving $\lambda$-graph system 
${\frak L}$ is said to be {\it essentially free}\/
if the topological dynamical system
$(X_{\frak L}, \sigma_{\frak L})$ is essentially free,
which says that   
for $m\ne n$, the set 
$X_{m,n} =\{ x \in X_{\frak L} \mid 
\sigma_{\frak L}^m(x) = \sigma_{\frak L}^n(x) \}
$
does not have non empty interior
(cf. \cite[p. 19]{Renault2}).
\end{definition}
The condition is equivalent to the one that 
the \'etale groupoid
$G_{\frak L}$ is essentially principal,
so that the $C^*$-subalgebra
$\DFL$ is maximal abelian in $\OFL$.

\begin{lemma}
If a left-resolving $\lambda$-graph system 
satisfies condition (I), then it is essentially free.
\end{lemma}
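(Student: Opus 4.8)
The plan is to show that $X_{m,n}$ has empty interior for every $m\neq n$; by symmetry take $m>n$ and put $p:=m-n\geq 1$, so that $x\in X_{m,n}$ precisely when $\sigma_{\frak L}^{n}(x)$ is fixed by $\sigma_{\frak L}^{p}$, and hence the label sequence $(\lambda(x)_j)_j$ of $x$ satisfies $\lambda(x)_{j+p}=\lambda(x)_j$ for all $j>n$. Since $X_{m,n}$ is closed and the clopen sets $U(\mu,v_i^l)$ form a basis of $X_{\frak L}$, it suffices to prove that no nonempty $U:=U(\mu,v_i^l)$, with $k:=|\mu|\leq l$, is contained in $X_{m,n}$. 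So I would fix a point $x^0=(\alpha_j^0,u_j^0)_{j\in\N}\in U$ and argue by contradiction, assuming $U\subseteq X_{m,n}$.

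The first step is to translate $U\subseteq X_{m,n}$ into a statement about infinite upward paths in the Bratteli diagram issuing from the vertex $v_i^l$. Because $\frak L$ is left-resolving (so that $v(x)_0$, and inductively $v(x)_1,\dots,v(x)_{k-1}$, are determined by later data) and $\sigma_{\frak L}$ is a local homeomorphism, the points of $X_{\frak L}$ lying in $U$ are exactly those whose first $k$ labels are $\mu$ and whose vertex at coordinate $k$ is an $\iota$-orbit $u\in\Omega_{\frak L}$ with $u^l=v_i^l$ reachable by reading $\mu$; for such a point, its ``future'' label sequence from coordinate $k$ on is an element of $F(u):=\{\,\lambda(x')\mid x' \text{ an infinite } E_{\frak L}\text{-path from } u\,\}$. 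Thus $U\subseteq X_{m,n}$ forces every sequence in $\bigcup_u F(u)$ to be eventually $p$-periodic with preperiod $\leq n$. The geometric content I need — and this is the step I expect to be the main obstacle — is the converse realization statement: $\Gamma_\infty^{+}(v_i^l)\subseteq\bigcup_u F(u)$. Given a witnessing infinite upward Bratteli path from $v_i^l$ with label sequence $\gamma$, one must assemble a genuine point of $X_{\frak L}$ in $U$ with that future, i.e.\ choose vertices coherently through all the levels of the projective limit $\Omega_{\frak L}$; this is precisely where the local property of $\frak L$ (the label-preserving bijections $E^{\iota}(u,v)\cong E_{\iota}(u,v)$), together with the finiteness of each $V_l$ via a K\"onig / inverse-limit argument, has to be exploited. (The delicacy is that condition (I) only provides two upward label sequences from each individual vertex $v\in V_l$, and one must upgrade such a combinatorial branching into two honest points of $X_{\frak L}$ sitting in the given cylinder.)

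Granting $\Gamma_\infty^{+}(v_i^l)\subseteq\bigcup_u F(u)$, the contradiction with condition (I) is elementary. Every $\gamma\in\Gamma_\infty^{+}(v_i^l)$ would then be eventually $p$-periodic with preperiod $\leq n$. Pick such a $\gamma$ with a witnessing path $g_1,g_2,\dots$ in the Bratteli diagram, let $v'$ be the vertex reached after the first $n+p$ of the edges $g_r$, and note that the shifted sequence $(\gamma_{n+p+s})_{s\geq 1}$ lies in $\Gamma_\infty^{+}(v')$ and is purely $p$-periodic. By condition (I) there is $\gamma''\in\Gamma_\infty^{+}(v')$ with $\gamma''\neq(\gamma_{n+p+s})_{s\geq 1}$; concatenating $\gamma_1\cdots\gamma_{n+p}$ with $\gamma''$ (along $g_1\cdots g_{n+p}$ followed by a path realizing $\gamma''$ from $v'$) gives an element of $\Gamma_\infty^{+}(v_i^l)$. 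Being eventually $p$-periodic with preperiod $\leq n$, it forces $\gamma''_s=\gamma_{n+s}=\gamma_{n+p+s}$ for $s=1,\dots,p$ and forces $\gamma''$ itself to be purely $p$-periodic, whence $\gamma''=(\gamma_{n+p+s})_{s\geq 1}$ — a contradiction. Therefore $U\not\subseteq X_{m,n}$, so $X_{m,n}$ has empty interior and $\frak L$ is essentially free.

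Finally, I note that this lemma could alternatively be obtained from the universality of $\OFL$ recorded in the preceding Proposition together with the equivalence ``essentially free $\iff$ $\DFL$ maximal abelian in $\OFL$'', but the argument above has the advantage of staying entirely within the dynamics of $(X_{\frak L},\sigma_{\frak L})$; in either approach the one genuinely technical point is the realization, flagged above, of upward Bratteli paths as honest points of $X_{\frak L}$ lying in a prescribed cylinder.
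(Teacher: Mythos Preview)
Your proposal is correct and follows essentially the same approach as the paper: assume a basic cylinder $U(\mu,v_i^l)$ lies in $X_{m,n}$ and contradict condition~(I) at a suitable vertex. The paper's short proof passes \emph{back} to a vertex $v_j^m\in V_m$ (the source of the unique left-resolving path labeled $(\mu_{m+1},\dots,\mu_l)$ ending at $v_i^l$) and asserts directly that $\Gamma_\infty^{+}(v_j^m)$ is the single periodic sequence $(\mu_{n+1},\dots,\mu_m)^\infty$, whereas you work \emph{forward} from $v_i^l$ to a deeper vertex $v'$ and, more carefully than the paper, explicitly isolate the realization step (lifting Bratteli paths from a vertex to genuine points of $X_{\frak L}$ lying in the chosen cylinder) that underlies both arguments.
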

\begin{proof}
Suppose that ${\frak L}$ satisfies condition (I) 
and is not essentially free.
There exist $m,n \in \Zp$ such that $m> n$ and
$X_{m,n} =\{ x \in X_{\frak L} \mid 
\sigma_{\frak L}^m(x) = \sigma_{\frak L}^n(x) \}
$
has a nonempty interior.
 By taking $l\in \N$ large enough
such as 
$l >m $, we may assume that 
$U(\mu,v_i^l) \subset X_{m,n}$
for some $\mu =(\mu_1,\dots,\mu_l) \in B_l(X_\Lambda)$ 
and
$v_i^l \in V_l.$
Let $\gamma$ be a path in ${\frak L}$ starting at a vertex in $V_m$ 
with labeled $(\mu_{m+1}, \dots,\mu_l)$ and
 ending at $v_i^l$.
Such path is unique because ${\frak L}$ is left-resolving.
Let us denote by $v_j^m\in V_m$ the source vertex of $\gamma$.
Since each sequence in $\Gamma_\infty^+(v_j^m)$ 
must be a unique periodic sequence
repeated the word $(\mu_{n+1},\cdots,\mu_m)$.
It is a contradiction to the condition (I).
\end{proof}

Let us denote by
$\mathcal{D}_\Lambda$ the $C^*$-subalgebra of $\mathcal{D}_{\frak L}$ 
generated by the projections
$S_\mu S_\mu^*, \mu \in B_*(X_\Lambda)$.
It is canonically 
isomorphic to the commutative $C^*$-algebra
$C(X_\Lambda)$ through the correspondence
$S_\mu S_\mu^* \in {\mathcal{D}}_\Lambda
\longrightarrow 
\chi_{U_\mu}\in C(X_\Lambda)$.
The inclusion
$\mathcal{D}_\Lambda \subset \mathcal{D}_{\frak L}$
is induced from the map 
$\pi_{\frak L}^*: f \in C(X_\Lambda) \longrightarrow f\circ\pi_{\frak L} \in
C(X_{\frak L}).$
We note the following lemma that is useful in our further discussions.
\begin{lemma}[{\cite[Lemma 6.5]{MaYMJ2010}}]\label{lem:masa}
Suppose that ${\frak L}$ satisfies condition (I).
Then the subalgebra $\mathcal{D}_\Lambda^\prime \cap \OFL$
of elements of $\OFL$ commuting with all elements of $\mathcal{D}_\Lambda$
coincides with $\mathcal{D}_{\frak L}$, that is,
\begin{equation*}
\mathcal{D}_\Lambda^\prime \cap \OFL = \mathcal{D}_{\frak L}. 
\end{equation*}  
Hence the position of $\mathcal{D}_\Lambda$ in $\OFL$ determines 
that of $\mathcal{D}_{\frak L}$ in $\OFL$. 
\end{lemma}

\section{Groupoid isomorphism}

In what follows, 
a left-resolving $\lambda$-graph system ${\frak L}$ is assumed to satisfy condition (I)
and hence to be essentially free.
Let
$N(\OFL,\DFL)$ be the normalizer of $\DFL$ in $\OFL$ consisting of partial isometries defined by
\begin{equation*}
N(\OFL,\DFL) : = \{
w \in \OFL
\mid
w d w^*, w^*d w \in \DFL \text{ for all } d\in \DFL
\}. 
\end{equation*}
For $w \in N(\OFL,\DFL),$
both elements $w^*w$ and $ww^*$ belong to $\DFL$.
For $\mu,\nu \in B_*(X_\Lambda), v_i^l \in V$
satisfying
$S_\mu^* S_\mu , S_\nu^* S_\nu\ge E_i^l,$
we know that
$S_\mu E_i^l S_\nu^* $ belongs to
$N(\OFL,\DFL)$ 
and satisfies
\begin{equation*}
\dom(S_\mu E_i^l S_\nu^*) = U(\nu,v_i^l),\qquad
\ran(S_\mu E_i^l S_\nu^*) = U(\mu,v_i^l)
\end{equation*}
under the natural identification between 
the algebras $\DFL$ and $C(X_{\frak L}).$ 
We will consider the Weyl groupoid
$\GNOD$
of germs of pairs
$(w, x) $ with $w \in N(\OFL,\DFL), x \in \dom(w)$
in the following way.
Two elements
$(w_1,x_1), (w_2, x_2)$ 
with
$w_i \in N(\OFL,\DFL), x_i \in \dom(w_i)$
for $i=1,2$
are said to be equivalent if
$x_1 = x_2$ and
there exists an open neighborhood $W \subset \dom(w_1)\cap \dom(w_2)$
of $x_1 = x_2$ such that 
$w_1(y) = w_2(y)$ for all $y \in W$.
The set of equivalence classes $[(w,x)]$ of the pairs $(w,x)$ is denoted by 
$\GNOD$ with partially defined product:
\begin{equation*}
[(w_1,x_1)]\cdot [(w_2, x_2)] = [(w_1w_2, x_2)]
\quad
\text{ if }
\quad w_2(x_2) = x_1
\end{equation*}
and the inverse operation
\begin{equation*}
[(w,x)]^{-1} =  [(w^*, w(x))]. 
\end{equation*}
The topology 
on $\GNOD
$
is generated by
$\{ \{[(w,x)]\mid x \in \dom(w)\}, w \in 
N(\OFL,\DFL)\}. $
It becomes an \'etale groupoid by a general theory studied by J. Renault
 (\cite[Proposition 4.10]{Renault2}, cf. \cite{Renault3}, \cite{BCW}).
 Since the groupoid $\GNOD$ is ample,
 for $[(w,x)] \in \GNOD,$
 there exists a partial isometry $v \in N(\OFL,\DFL)$
 such that $[(w,x)] = [(v,x)]$. 
 This fact was kindly informed by the referee.
For a word $\mu =(\mu_1,\dots,\mu_k) \in B_k(X_\Lambda)$,
let us denote by $|\mu|$ the length $k$.
By  \cite[Proposition 4.13]{Renault2} proved by Renault, we have 
\begin{lemma}[cf. {\cite[Proposition 4.13]{Renault2}}] \label{lem:3.1}
The correspondence
\begin{equation*}
\phi_{\frak L}:
(x, |\mu|-|\nu|, z) \in Z(U(\mu,v_i^l), |\mu|,|\nu|, U(\nu,v_i^l))
\subset
G_{\frak L}
\longrightarrow 
[(S_\mu E_i^l S_\nu^*, z)] \in \GNOD
\end{equation*}
gives rise to an isomorphism of \'etale groupoids
between $G_{\frak L}$ and $\GNOD$.
\end{lemma}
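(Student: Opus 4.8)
The plan is to show that $\phi_{\frak L}$ is a well-defined bijective continuous open groupoid homomorphism, by reducing essentially everything to the computations already recorded above. First I would check that $\phi_{\frak L}$ is well defined on the basic open set $Z(U(\mu,v_i^l),|\mu|,|\nu|,U(\nu,v_i^l))$: given $(x,|\mu|-|\nu|,z)$ in this set, the partial isometry $S_\mu E_i^l S_\nu^*$ lies in $N(\OFL,\DFL)$ and, by the identification $\DFL\cong C(X_{\frak L})$ together with the formula $S_\mu E_i^l S_\nu^* = \chi_{U(\mu,v_i^l,\nu)}$, its associated partial homeomorphism has domain $U(\nu,v_i^l)\ni z$ and sends $z$ to $x$ (this is precisely how $\dom$ and $\ran$ were defined). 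So the germ $[(S_\mu E_i^l S_\nu^*,z)]$ makes sense. The genuine well-definedness issue is independence of the presentation: if the same element $(x,n,z)$ of $G_{\frak L}$ is written using two different triples $(\mu,v_i^l,\nu)$ and $(\mu',v_{i'}^{l'},\nu')$, I must show the two germs coincide. Here one uses that $\sigma_{\frak L}^{|\mu|}(x)=\sigma_{\frak L}^{|\nu|}(z)$ pins down, on a small clopen neighbourhood of $z$, the common value of both partial homeomorphisms (shift by $|\mu|-|\nu|$ following the prescribed initial words), and essential freeness of ${\frak L}$ guarantees that agreement of the underlying partial homeomorphisms near $z$ forces the germs in $\GNOD$ to agree — this is exactly the content of Renault's Proposition 4.10 applied to the essentially principal groupoid $G_{\frak L}$.

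Next I would verify the algebraic compatibility. For the product: if $(x,n,z)\cdot(z,m,w)=(x,n+m,w)$ with $(x,n,z)$ coming from $(S_\mu E_i^l S_\nu^*,z)$ and $(z,m,w)$ from $(S_{\mu'}E_{i'}^{l'}S_{\nu'}^*,w)$, then since $z\in U(\nu,v_i^l)\cap U(\mu',v_{i'}^{l'})$ one can, after passing to a common refinement level $l$ (using the relation $E_i^l=\sum_j I_{l,l+1}(i,j)E_j^{l+1}$), arrange $\nu=\mu'$ and $v_i^l=v_{i'}^{l}$, so that $S_\mu E_i^l S_\nu^*\cdot S_\nu E_i^l S_{\nu'}^* = S_\mu E_i^l S_{\nu'}^*$ represents the product germ $[(S_\mu E_i^l S_{\nu'}^*,w)]$, matching the definition of multiplication in $\GNOD$. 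For the inverse: $(x,n,z)^{-1}=(z,-n,x)$ corresponds to $[(S_\nu E_i^l S_\mu^*,x)] = [((S_\mu E_i^l S_\nu^*)^*, \phi_{\frak L}(x,n,z)_{\text{base}})]$, which is the inverse operation in $\GNOD$. That $\phi_{\frak L}$ restricts to the identity on unit spaces (both identified with $X_{\frak L}$) is immediate from taking $\mu=\nu$ empty, $E_i^l$ ranging over a partition of unity.

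It then remains to show $\phi_{\frak L}$ is a homeomorphism. Continuity and openness follow because $\phi_{\frak L}$ carries the basic open set $Z(U(\mu,v_i^l),|\mu|,|\nu|,U(\nu,v_i^l))$ onto the basic open set $\{[(S_\mu E_i^l S_\nu^*,z)]\mid z\in U(\nu,v_i^l)\}$ of $\GNOD$, and these families generate the respective topologies. For injectivity, suppose $\phi_{\frak L}(x,n,z)=\phi_{\frak L}(x',n',z')$; comparing source and range in $\GNOD$ gives $z=z'$ and $x=x'$, and comparing the $\Z$-grading — here one invokes that the cocycle $(x,n,z)\mapsto n$ on $G_{\frak L}$ corresponds under the Weyl-groupoid picture to the degree $|\mu|-|\nu|$, which is intrinsic because the germs determine the continuous homomorphism to $\Z$ — yields $n=n'$. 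Surjectivity holds because every normalizer $w\in N(\OFL,\DFL)$ is, locally around any point of its domain, a finite sum of the elementary normalizers $S_\mu E_i^l S_\nu^*$ (using the density of the $*$-algebra spanned by the $S_\mu E_i^l S_\nu^*$ and the fact that, near a single point, a partial isometry in the normalizer is locally implemented by one of these terms), so every germ in $\GNOD$ is hit. The main obstacle is the well-definedness/injectivity step: making rigorous that two different $(\mu,v_i^l,\nu)$-presentations of the same groupoid element give equal germs, and that distinct groupoid elements give distinct germs. Both hinge on essential freeness of ${\frak L}$, which is exactly why that hypothesis is imposed; with it in hand, the argument is Renault's Proposition 4.10 transported to the present setting, and the rest is bookkeeping with the operator relations $({\frak L})$.
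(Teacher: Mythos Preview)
The paper offers no proof of this lemma beyond the sentence ``By \cite[Proposition 4.10]{Renault2} proved by Renault, we have'' and then the statement of the correspondence $\phi_{\frak L}$; your proposal is a correct and more explicit unpacking of that citation in the present setting, deferring to Renault at the same crucial juncture (essential principality of $G_{\frak L}$ forcing the germ map to be well defined and injective).

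One small point worth tightening: your injectivity step, where you invoke an ``intrinsic $\Z$-grading'' on $\GNOD$ to conclude $n=n'$, is circular as phrased, since the Weyl groupoid of germs carries no such grading a priori --- that is precisely what injectivity of $\phi_{\frak L}$ would give you. The clean argument is the one you already gesture toward at the end: if $(x,n,z)$ and $(x,n',z)$ with $n\ne n'$ had equal germs, then the associated partial homeomorphisms would agree on an open neighbourhood $W$ of $z$, and for every $w\in W$ one would obtain a nontrivial isotropy element $(w,n-n',w)\in G_{\frak L}$, so $W\subset X_{m,m'}$ for some $m\ne m'$, contradicting essential freeness. This is exactly how Renault's Proposition 4.10 handles the point, so your deferral to it is appropriate; just drop the grading language.
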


A continuous function $f:G_{\frak L} \longrightarrow \Z$ 
is called a continuous homomorphism
if it satisfies 
$
f(\gamma_1 \gamma_2) =
f(\gamma_1) + f(\gamma_2) 
$ 
for
 $\gamma_1, \gamma_2 \in G_{\frak L}$
with $\gamma_1, \gamma_2 \in G_{\frak L}.
$
It defines a one-parameter unitary
group $U_t(f), t \in \mathbb{R}/\Z = \T$
on $l^2(G_{\frak L})$
by setting
\begin{equation*}
[U_t(f)\xi](x,n,z) = \exp{(2\pi\sqrt{-1} f(x,n,z) t )}\xi(x,n,z) 
\end{equation*}
for 
$\xi \in l^2(G_{\frak L}), \, (x,n,z) \in G_{\frak L}.$
The automorphisms $\Ad(U_t(f)), t \in \T$ 
on $B(l^2(G_{\frak L}))$
leave $C^*_r(G_{\frak L})$ 
globally invariant,
and yield an action of $\T$ on $C^*_r(G_{\frak L})$.
Let us denote by $\rho^{{\frak L},f}_t$ 
the action
$\Ad(U_t(f)), t \in \T$ on $C^*_r(G_{\frak L})$.
It is called the generalized gauge action 
on $C^*_r(G_{\frak L})$.
The generalized gauge action 
$\rho^{{\frak L},c_{\frak L}}$ for the continuous function defined by
$c_{\frak L}(x,n,z) = n$ is called the gauge action on $\OFL$ and written
simply $\rho^{\frak L}$.

\begin{lemma}\label{lem:1.5}
Let
$S_\mu E_i^l S_\nu^* \in N(\OFL,\DFL)$
and
a  continuous homomorphism $f:G_{\frak L} \longrightarrow \Z$ 
be given.
For $\eta \in G_{\frak L}$, assume that $f(\eta)$ 
does not depend on $z\in U(\nu,v_i^l)$
as long as 
$\phi_{\frak L}(\eta) =[(S_\mu E_i^l S_\nu^*, z)]$.
For $m \in \Z,$
we have
\begin{equation}
\rho_t^{{\frak L},f}(S_\mu E_i^l S_\nu^*)
=\exp(2\pi{\sqrt{-1}m t})S_\mu E_i^l S_\nu^*
\quad
\text{ for all }
t \in \mathbb{T}
\label{eq:rholf} 
\end{equation}
if and only if $m = f(\eta)$.
\end{lemma}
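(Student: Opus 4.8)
The plan is to unwind the definitions so that the generalized gauge action on the partial isometry $S_\mu E_i^l S_\nu^*$ is computed directly from the action of $U_t(f)$ on $l^2(G_{\frak L})$, using the explicit identification $S_\mu E_i^l S_\nu^* = \pi(\chi_{U(\mu,v_i^l,\nu)})$ together with Lemma \ref{lem:3.1}. First I would recall that $\phi_{\frak L}$ sends the basic open bisection $Z(U(\mu,v_i^l),|\mu|,|\nu|,U(\nu,v_i^l))$ onto the germ set of $S_\mu E_i^l S_\nu^*$, so every $\eta \in G_{\frak L}$ with $\phi_{\frak L}(\eta) = [(S_\mu E_i^l S_\nu^*, z)]$ has the form $\eta = (x, |\mu|-|\nu|, z)$ lying in that same bisection. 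Since $f$ is a continuous homomorphism into the discrete group $\Z$ and the bisection $U(\mu,v_i^l,\nu)$ is clopen and connected in the relevant sense (it is the support of $\chi_{U(\mu,v_i^l,\nu)}$, a single $G_{\frak L}$-set on which $r$ and $d$ are homeomorphisms), $f$ is constant on it; call this constant value $m_0$. So the claim reduces to showing that $\rho_t^{{\frak L},f}(S_\mu E_i^l S_\nu^*) = \exp(\sqrt{-1}\,m_0 t)\, S_\mu E_i^l S_\nu^*$ for all $t$, and that this scalar multiple property pins down $m_0$ uniquely.

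For the forward computation, I would apply the definition $\rho_t^{{\frak L},f} = \Ad(U_t(f))$ to $\pi(\chi_{U(\mu,v_i^l,\nu)})$ acting on a test vector $\xi \in C_c(G_{\frak L})$, and use the convolution formula for $\pi(f)\xi = f * \xi$ together with the pointwise formula $[U_t(f)\xi](x,n,z) = \exp(2\pi\sqrt{-1}\,t\,f(x,n,z))\xi(x,n,z)$. Writing out $U_t(f)\,\pi(\chi_{U(\mu,v_i^l,\nu)})\,U_t(f)^*\xi$ and evaluating at a point $(x,n,z)$, the sum in the convolution is over factorizations through the single germ in the bisection; the three exponential factors combine via the homomorphism property $f(\gamma_1\gamma_2) = f(\gamma_1) + f(\gamma_2)$ so that the contributions from $U_t(f)^*$ and the intermediate step cancel, leaving exactly $\exp(2\pi\sqrt{-1}\,t\,m_0)$ times $[\pi(\chi_{U(\mu,v_i^l,\nu)})\xi](x,n,z)$. (One has to be slightly careful about whether the intended period is $2\pi$ or $1$; matching the statement's normalization $\exp(\sqrt{-1}mt)$ against the definition's $\exp(2\pi\sqrt{-1}tf(\cdot))$ means $m = 2\pi m_0$, or the statement is using $t$ normalized so the two agree — I would simply track the constant through and identify $m$ with $f(\eta)$ in whatever normalization the definition fixes.) This gives equation \eqref{eq:rholf} with $m = f(\eta)$.

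For the converse, suppose $\rho_t^{{\frak L},f}(S_\mu E_i^l S_\nu^*) = \exp(\sqrt{-1}\,m t)\, S_\mu E_i^l S_\nu^*$ for all $t$. From the forward direction we already know the same identity holds with $m$ replaced by $f(\eta)$, and $S_\mu E_i^l S_\nu^* = \pi(\chi_{U(\mu,v_i^l,\nu)}) \neq 0$ since $U(\mu,v_i^l,\nu)$ is a nonempty clopen set (it is nonempty precisely because $\eta$ exists in it). Subtracting, $(\exp(\sqrt{-1}\,m t) - \exp(\sqrt{-1}\,f(\eta) t)) S_\mu E_i^l S_\nu^* = 0$ for all $t$, and since the partial isometry is nonzero this forces $\exp(\sqrt{-1}\,m t) = \exp(\sqrt{-1}\,f(\eta) t)$ for all $t \in \T$, hence $m = f(\eta)$ as integers.

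The main obstacle I anticipate is purely bookkeeping: making the convolution computation in $l^2(G_{\frak L})$ clean, i.e. correctly tracking which germ the sum $\sum_{s = t_1 t_2} f(t_1)g(t_2)$ runs over when $f = \chi_{U(\mu,v_i^l,\nu)}$ is supported on a bisection, and verifying that $f$ is genuinely constant on $U(\mu,v_i^l,\nu)$ — the latter follows because $U(\mu,v_i^l,\nu)$ is the image under $\phi_{\frak L}^{-1}$ of a single germ-class neighborhood, on which any groupoid homomorphism to the discrete group $\Z$ is locally constant, and this bisection is connected enough (it is $Z(U(\mu,v_i^l),|\mu|,|\nu|,U(\nu,v_i^l))$, which is the germ set of one normalizer and hence carries a single value of $f$). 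Once constancy of $f$ on the bisection is established, the rest is a routine manipulation of exponentials using the cocycle identity.
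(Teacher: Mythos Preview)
Your convolution computation and use of the cocycle identity match the paper's approach, but there is a genuine gap: the claim that $f$ is constant on $U(\mu,v_i^l,\nu)$ is unjustified. The bisection sits inside a zero-dimensional groupoid and is therefore totally disconnected; continuity of $f$ into the discrete group $\Z$ gives only local constancy, not constancy on the whole bisection. Being ``a single $G_{\frak L}$-set'' or ``the germ set of one normalizer'' does not force a continuous cocycle to take a single value there. Concretely, any coboundary $f(\gamma)=g(r(\gamma))-g(d(\gamma))$ with $g\in C(X_{\frak L},\Z)$ nonconstant on the cylinder $U(\mu,v_i^l)$ yields an $f$ that varies across $U(\mu,v_i^l,\nu)$.

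The paper sidesteps this entirely: it does \emph{not} assume constancy. It expands both $[\rho_t^{{\frak L},f}(w)\zeta](\gamma)$ and $[\exp(\sqrt{-1}mt)\,w\,\zeta](\gamma)$ as sums over $\gamma'\in U(\mu,v_i^l,\nu)$ and compares them term by term, concluding that \eqref{eq:rholf} holds if and only if $f(\gamma')=m$ for \emph{every} $\gamma'\in U(\mu,v_i^l,\nu)$. The last line of the proof then identifies this bisection with the set of all $\eta$ satisfying $\phi_{\frak L}(\eta)=[(S_\mu E_i^l S_\nu^*,z)]$ for some $z$; so the assertion ``$m=f(\eta)$'' is to be read with $\eta$ ranging over that set, not for a single fixed $\eta$. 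Your argument via nonzeroness of the partial isometry would recover this once you drop the constancy assumption and instead conclude, from the two expansions, that the equation forces $f\equiv m$ on the support --- which is exactly what the paper does.
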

\begin{proof}
Let us represent the $C^*$-algebra
$\OFL$ on $l^2(G_{\frak L})$
as $\OFL = C^*_r(G_{\frak L})$,
so that 
the generator
$S_\mu E_i^l S_\nu^*$
is identified with
$\chi_{U(\mu,v_i^l,\nu)} \in C_c(G_{\frak L})$.
Put
$w =S_\mu E_i^l S_\nu^*.$
For $\zeta \in l^2(G_{\frak L}), \gamma \in G_{\frak L}$,
we have
\begin{align*}
 & [\rho^{{\frak L},f}_t(w)\zeta](\gamma) \\
= &[U_t(f) w U_t(f) ^*\zeta](\gamma) \\
= &\exp({ 2\pi\sqrt{-1} f(\gamma) t}) [ w U_t(f) ^*\zeta](\gamma) \\
= &\exp({ 2\pi\sqrt{-1} f(\gamma) t}) 
\left( \sum_{\gamma' \in G_{\frak L}} w(\gamma')
  \exp({ -2\pi\sqrt{-1} f(\gamma^{\prime-1}\gamma)t })\zeta(\gamma^{\prime-1}\gamma)\right) \\
= &\exp({ 2\pi\sqrt{-1} f(\gamma) t}) 
\left( \sum_{\gamma' \in G_{\frak L}} w(\gamma')
  \exp({ -2\pi\sqrt{-1} (f(\gamma^{\prime-1})+f(\gamma))}t)\zeta(\gamma^{\prime-1}\gamma)\right) \\
= & \sum_{\gamma' \in G_{\frak L}} w(\gamma')
  \exp({ 2\pi\sqrt{-1} f(\gamma') t})\zeta(\gamma^{\prime-1}\gamma) 
=  \sum_{\gamma' \in U(\mu,v_i^l,\nu)} 
  \exp({ 2\pi\sqrt{-1} f(\gamma') t})\zeta(\gamma^{\prime-1}\gamma) \\
\intertext{and}
 & [(\exp(2\pi\sqrt{-1}m t)w \zeta](\gamma) \\
= &\exp(2\pi\sqrt{-1}m t)\sum_{\gamma'\in G_{\frak L}}
   w(\gamma')\zeta(\gamma^{\prime-1}\gamma) 
=  \sum_{\gamma' \in U(\mu,v_i^l,\nu)} 
  \exp(2\pi\sqrt{-1} m t)
   \zeta(\gamma^{\prime-1}\gamma).
\end{align*}
Hence
\eqref{eq:rholf}
holds if and only if $f(\gamma') = m$ for all $\gamma' \in U(\mu,v_i^l,\nu)$.
As the condition $\eta \in U(\mu,v_i^l,\nu)$
is equivalent to the condition that
$\phi_{\frak L}(\eta) =[(S_\mu E_i^l S_\nu^*, z)]$
for some $z \in X_{\frak L}$,
we obtain the desired assertion. 
\end{proof}
We will show the next proposition that will be used in the following sections. 
\begin{proposition}\label{prop:onesidedgroupoid}
Let ${\frak L}_1$ and 
 ${\frak L}_2$ be left-resolving $\lambda$-graph systems satisfying condition (I).
Let
$f_1:G_{{\frak L}_1}\longrightarrow \Z$
and
$f_2:G_{{\frak L}_2}\longrightarrow \Z$
 be continuous homomorphisms of groupoids. 
 Then the following are equivalent:
 \begin{enumerate}
\renewcommand{\theenumi}{\roman{enumi}}
\renewcommand{\labelenumi}{\textup{(\theenumi)}}
\item
There exists an isomorphism
$\varphi :G_{{\frak L}_1}\longrightarrow G_{{\frak L}_2}
$ of \'etale groupoids such that 
${f_1} = {f_2}\circ \varphi$.
\item
There exists an isomorphism
$\Phi:{\mathcal{O}}_{{\frak L}_1}\longrightarrow {\mathcal{O}}_{{\frak L}_2}
$ of $C^*$-algebras
such that 
$\Phi({\mathcal{D}}_{{\frak L}_1})={\mathcal{D}}_{{\frak L}_2}
$ and
$\Phi\circ \rho_t^{{\frak L}_1,f_1} =\rho_t^{{\frak L}_2,f_2}\circ \Phi, \,t \in \mathbb{T}.$ 
\end{enumerate}
\end{proposition}
\begin{proof}
(i) $\Longrightarrow$ (ii):
Suppose that 
there exists an isomorphism
$\varphi: G_{{\frak L}_1}\longrightarrow G_{{\frak L}_2}
$ of \'etale groupoids such that 
$f_1 = f_2 \circ \varphi$.
The unit space of $G_{{\frak L}_i}$ 
is the subgroupoid
$\{(x,0,x) \mid x \in X_{{\frak L}_i} \}
$
of $G_{{\frak L}_i}$
which is denoted by
$ G_{{\frak L}_i}^0$ for $i=1,2$.
The restriction of $\varphi$ to 
$ G_{{\frak L}_1}^0$
is a homeomorphism onto 
$ G_{{\frak L}_2}^0$.
Since the unit spaces 
$ G_{{\frak L}_i}^0$
are identified with 
$X_{{\frak L}_i}$ through the map
$x \in X_{{\frak L}_i} \longrightarrow (x,0,x) \in G_{{\frak L}_i}^0$
we have a homeomorphism
$h:X_{{\frak L}_1}\longrightarrow 
X_{{\frak L}_2}$
such that 
\begin{equation*}
\varphi(x,0,x) = (h(x),0,h(x)), \qquad x \in X_{{\frak L}_1}.
\end{equation*}
We define unitaries
$
V_{\varphi}:l^2(G_{{\frak L}_2})\longrightarrow l^2(G_{{\frak L}_1})
$ 
and
$
V_{\varphi^{-1}}:l^2(G_{{\frak L}_1})\longrightarrow l^2(G_{{\frak L}_2})
$ 
by setting
\begin{align*}
[V_{\varphi}\zeta](x,n,z) &
:= \zeta(\varphi(x,n,z)) \quad \text{ for } \zeta\in  l^2(G_{{\frak L}_2}),\\
[V_{\varphi^{-1}}\eta](y,m,w) &
:= \eta(\varphi^{-1}(y,m,w)) \quad \text{ for } \eta\in  l^2(G_{{\frak L}_1}).
\end{align*} 
We have $V_{\varphi}^*=V_{\varphi^{-1}}$.
Put
$\Phi = \Ad(V_{\varphi}^*)$.
It satisfies
$\Phi(C_c(G_{{\frak L}_1})) = 
C_c(G_{{\frak L}_2})$
so that 
$\Phi(C^*_r(G_{{\frak L}_1})) = 
C^*_r(G_{{\frak L}_2}).$
As
$\varphi(G_{{\frak L}_1}^0) =G_{{\frak L}_2}^0$,
we have
$\Phi(C(X_{{\frak L}_1})) = 
C(X_{{\frak L}_2}).$
We will next show that 
$\Phi\circ \rho^{{\frak L}_1,f_1}_t =\rho^{{\frak L}_2,f_2}_t\circ \Phi$ 
in the following way.
For $\zeta \in l^2(G_{{\frak L}_2}), (y,m,w) \in G_{{\frak L}_2}$ and
$a \in C_c(G_{{\frak L}_1})$, we have
\begin{align*}
& [\Phi(\Ad(U_t(f_1))(a))\zeta](y,m,w) \\
=& [U_t(f_1)aU_t(f_1)^*V \zeta](\varphi^{-1}(y,m,w)) \\
=& \exp({ 2\pi\sqrt{-1} f_1(\varphi^{-1}(y,m,w)) t})[aU_t(f_1)^*V_\varphi \zeta](\varphi^{-1}(y,m,w)) \\
=& \exp({ 2\pi\sqrt{-1} f_1(\varphi^{-1}(y,m,w)) t})
\left(\sum_{r(\gamma) = r(\varphi^{-1}(y,m,w))} a(\gamma) (U_t(f_1)^*V_\varphi \zeta)(\gamma^{-1}\varphi^{-1}(y,m,w))\right) \\
=& \exp({ 2\pi\sqrt{-1} f_1(\varphi^{-1}(y,m,w)) t})\\
&\left(\sum_{r(\gamma) = h^{-1}(y)} a(\gamma) 
\exp({-2\pi\sqrt{-1} f_1(\gamma^{-1}\varphi^{-1}(y,m,w)) t})
(V_\varphi \zeta)(\gamma^{-1}\varphi^{-1}(y,m,w))\right) \\
=& \exp({ 2\pi\sqrt{-1} f_1(\varphi^{-1}(y,m,w)) t})\\
 &\left(\sum_{r(\gamma) = h^{-1}(y)} a(\gamma) 
\exp({-2\pi\sqrt{-1} (f_1(\gamma^{-1}) + f_1(\varphi^{-1}(y,m,w)) t})
(V_\varphi \zeta)(\gamma^{-1}\varphi^{-1}(y,m,w))\right) \\
=& 
\sum_{r(\gamma) = h^{-1}(y)} a(\gamma) 
\exp({-2\pi\sqrt{-1} (f_1(\gamma^{-1})) t})
\zeta (\varphi(\gamma^{-1})(y,m,w)))
\end{align*}
and
\begin{align*}
& [\Ad(U_t(f_2))(\Phi(a))\zeta](y,m,w) \\
=& [U_t(f_2)V_\varphi^*a V_\varphi U_t(f_2)^* \zeta](y,m,w) \\
=& \exp({ 2\pi\sqrt{-1} f_2(y,m,w) t})[a V_\varphi U_t(f_2)^* \zeta](\varphi^{-1}(y,m,w))\\
=& \exp({ 2\pi\sqrt{-1} f_2(y,m,w) t})
\left(
\sum_{r(\gamma) = r(\varphi^{-1}(y,m,w))} 
a(\gamma)  V _\varphi U_t(f_2)^* \zeta(\gamma^{-1}\varphi^{-1}(y,m,w))\right) \\
=& \exp({ 2\pi\sqrt{-1} f_2(y,m,w) t})
\left(
\sum_{r(\gamma) = h^{-1}(y)} 
a(\gamma) U_t(f_2)^* \zeta (\varphi(\gamma^{-1})(y,m,w))\right) \\
=& \exp({ 2\pi\sqrt{-1} f_2(y,m,w) t})
\left(
\sum_{r(\gamma) = h^{-1}(y)} 
a(\gamma) \exp(-{ 2\pi\sqrt{-1} f_2(\varphi(\gamma^{-1})(y,m,w)) t})
\zeta (\varphi(\gamma^{-1})(y,m,w))\right) \\
=& \exp({ 2\pi\sqrt{-1} f_2(y,m,w) t}) \\
 & \left(
\sum_{r(\gamma) = h^{-1}(y)} 
a(\gamma) \exp(-{ 2\pi\sqrt{-1} (f_2(\varphi(\gamma^{-1})) + f_2(y,m,w)) t})
\zeta (\varphi(\gamma^{-1})(y,m,w))\right) \\
=& 
\sum_{r(\gamma) = h^{-1}(y)} 
a(\gamma) \exp(-{ 2\pi\sqrt{-1} f_2(\varphi(\gamma^{-1})) t})
\zeta(\varphi(\gamma^{-1})(y,m,w)). 
\end{align*}
By the assumption $f_1 = f_2\circ\varphi$,
we have
$\Phi\circ \Ad(U_t(f_1)) =\Ad(U_t(f_2))\circ \Phi$
and hence 
$\Phi\circ \rho_t^{{\frak L}_1,f_1} =\rho_t^{{\frak L}_2,f_2}\circ \Phi, \, t \in \mathbb{T}.$

(ii) $\Longrightarrow$ (i):
Suppose that 
there exists an isomorphism
$\Phi:{\mathcal{O}}_{{\frak L}_1}\longrightarrow {\mathcal{O}}_{{\frak L}_2}
$ 
of $C^*$-algebras such that 
$
\Phi({\mathcal{D}}_{{\frak L}_1})={\mathcal{D}}_{{\frak L}_2}
$ 
and
$\Phi\circ \rho_t^{{\frak L}_1,f_1} =\rho_t^{{\frak L}_2,f_2}\circ \Phi, \, t \in {\mathbb{T}}.$
We identify ${\mathcal{D}}_{{\frak L}_i}$ with
$C(X_{{\frak L}_i}).$
Since 
$
{\mathcal{O}}_{{\frak L}_i} = C^*(G_{{\frak L}_i}),
{\mathcal{D}}_{{\frak L}_i} = C^*(G_{{\frak L}_i}^0),
i=1,2,
$
Renault's result \cite[Proposition 4.11]{Renault2} ensures that
there exists an isomorphism
$\psi:G_{N({\mathcal{O}}_{{\frak L}_1},{\mathcal{D}}_{{\frak L}_1})}
\longrightarrow
G_{N({\mathcal{O}}_{{\frak L}_2},{\mathcal{D}}_{{\frak L}_2})}
$
of \'etale groupoids
and a homeomorphism
$h:X_{{\frak L}_1}\longrightarrow X_{{\frak L}_2}$
such that 
\begin{equation}
\psi([(w,x)]) = [(\Phi(w),h(x))],
\qquad
[(w,x)] \in G_{N({\mathcal{O}}_{{\frak L}_1},{\mathcal{D}}_{{\frak L}_1})}.
\label{eq:1.6.1}
\end{equation}
Let
$\phi_{{\frak L}_i}: G_{{\frak L}_i}\longrightarrow 
G_{N({\mathcal{O}}_{{\frak L}_i},{\mathcal{D}}_{{\frak L}_i})}, i=1,2
$
be the isomorphism of \'etale groupoids defined in Lemma \ref{lem:3.1}.
Define an isomorphism 
$\varphi: G_{{\frak L}_1}\longrightarrow
G_{{\frak L}_2}$
of \'etale groupoids
by
$\varphi := \phi_{{\frak L}_2}^{-1}\circ \psi\circ\phi_{{\frak L}_1}.
$
We will show that the equality
$f_1 = f_2 \circ\varphi$ holds.
For $\gamma \in G_{{\frak L}_1}$,
take
$w \in N({\mathcal{O}}_{{\frak L}_1},{\mathcal{D}}_{{\frak L}_1})$
and
$x = d(\gamma) \in X_{{\frak L}_1}$ such that 
\begin{equation}
\phi_{{\frak L}_1}(\gamma) = [(w,x)]. \label{eq:1.6.2}
\end{equation}
Since $f_1: G_{{\frak L}_1} \longrightarrow \Z$ is continuous, 
one may assume that $f_1(\gamma)$ does not depend on $x$ 
as long as $\gamma$ satisfies \eqref{eq:1.6.2}.  
By Lemma \ref{lem:1.5},
we have
\begin{equation}
\rho^{{\frak L}_1,f_1}_t(w) 
=\exp{(2\pi\sqrt{-1} f_1(\gamma) t)} w, \qquad t \in \T. \label{eq:3.4}
\end{equation} 
By \eqref{eq:1.6.1} and \eqref{eq:1.6.2},
we have
\begin{equation*}
[(\Phi(w),h(x))]
=\psi(\phi_{{\frak L}_1}(\gamma)) 
=  \phi_{{\frak L}_2}(\varphi(\gamma)).
\end{equation*}
Since both $f_2: G_{{\frak L}_2} \longrightarrow \Z$ and
$\varphi:G_{{\frak L}_1} \longrightarrow G_{{\frak L}_2}$ are continuous, 
one may assume that $f_2(\varphi(\gamma))$ does not depend on $h(x)$ 
as long as $\gamma$ satisfies 
$[(\Phi(w),h(x))]=  \phi_{{\frak L}_2}(\varphi(\gamma)).$  
By Lemma \ref{lem:1.5}, we have
\begin{equation*}
\rho_t^{{\frak L}_2,f_2}(\Phi(w)) 
=\exp{(2\pi\sqrt{-1} f_2(\varphi(\gamma)) t)}\Phi(w), \qquad t \in \T.
\end{equation*} 
Since
$\Phi\circ \rho_t^{{\frak L}_1,f_1} =\rho_t^{{\frak L}_2,f_2}\circ \Phi,$ 
we have
\begin{equation}
\rho_t^{{\frak L}_1,f_1}(w) 
=\Phi^{-1}(\exp{(2\pi\sqrt{-1} f_2(\varphi(\gamma)) t)}\Phi(w))
=\exp{(2\pi\sqrt{-1} f_2(\varphi(\gamma)) t)}w, \quad t \in \T 
\label{eq:1.6.3}
\end{equation} 
so that by \eqref{eq:3.4} and \eqref{eq:1.6.3} we obtain
$
f_1(\gamma) = f_2(\varphi(\gamma))
$ for 
$\gamma \in G_{{\frak L}_1}$ 
and hence 
$f_1 = f_2 \circ \varphi$.
\end{proof}

\section{Continuous orbit equivalence on one-sided subshifts}
Let $\frak L$ be a left-resolving $\lambda$-graph system 
over $\Sigma$ satisfying condition (I).
Recall that  $X_\Lambda$ 
denotes the shift space 
\begin{equation*}
X_\Lambda =\{(\alpha_i)_{i \in \N}\in \Sigma^{\N}
 \mid (\alpha_i,u_i)_{i\in\N}\in X_{\frak L} \text{ for some } u_i\in \Omega_{\frak L}, i\in \N\}  
\end{equation*}
of the right one-sided subshift
$(X_\Lambda,\sigma_\Lambda)$
with the shift transformation
$\sigma_{\Lambda}((\alpha_i)_{i \in \N}) =(\alpha_{i+1})_{i \in \N} 
$
for
$(\alpha_i)_{i \in \N} \in X_\Lambda$.
Then the correspondence
$\pi_{\frak L}: (\alpha_i,u_i)_{i\in \N} \in X_{\frak L}
\longrightarrow
(\alpha_i)_{i\in \N}\in X_\Lambda
$
gives rise to a continuous surjection
$\pi_{\frak L}: X_{\frak L} \longrightarrow X_\Lambda$
such that 
$\pi_{\frak L}\circ \sigma_{\frak L} = \sigma_{\Lambda}\circ\pi_{\frak L}$.

For two left-resolving $\lambda$-graph systems ${\frak L}_1$ and 
 ${\frak L}_2$,
denote by 
$(X_{\Lambda_1},\sigma_{\Lambda_1})$ and
$(X_{\Lambda_2},\sigma_{\Lambda_2})$ 
the right one-sided subshifts presented by
 ${\frak L}_1$ and 
 ${\frak L}_2$, respectively.

\begin{definition}[{\cite[Section 6]{MaYMJ2010}}] \label{def:lambdacoe}
 One-sided subshifts 
$(X_{\Lambda_1},\sigma_{\Lambda_1})$
and
$(X_{\Lambda_2},\sigma_{\Lambda_2})$
are said to be
$({\frak L}_1,{\frak L}_2)$-{\it continuously orbit equivalent}\/ if
there exist homeomorphisms
$h_{\frak L}: X_{{\frak L}_1}\longrightarrow X_{{\frak L}_2}$
and
$h_{\Lambda}: X_{\Lambda_1}\longrightarrow X_{\Lambda_2}$
satisfying 
$\pi_{{\frak L}_2} \circ h_{\frak L} =h_{\Lambda}\circ\pi_{{\frak L}_1}$
and  continuous maps
$k_i, l_i : X_{{\frak L}_i} \longrightarrow \Zp, i=1,2$ satisfying 
\begin{align}
\sigma_{{\frak L}_2}^{k_1(x)}(h_{\frak L}(\sigma_{{\frak L}_1}(x)))
& = 
\sigma_{{\frak L}_2}^{l_1(x)}(h_{\frak L}(x)),\qquad x \in X_{{\frak L}_1}, \label{eq:coelk1} \\ 
\sigma_{{\frak L}_1}^{k_2(y)}(h_{\frak L}^{-1}(\sigma_{{\frak L}_2}(y)))
& = 
\sigma_{{\frak L}_1}^{l_2(y)}(h_{\frak L}^{-1}(y)),\qquad y \in X_{{\frak L}_2}. \label{eq:coelk2}
\end{align}
\end{definition}
The above situation was called
that the factor maps
$\pi_{{\frak L}_1}:X_{{\frak L}_1}\longrightarrow X_{\Lambda_1}$ and
$\pi_{{\frak L}_2}:X_{{\frak L}_2}\longrightarrow X_{\Lambda_2}$ 
are continuously orbit equivalent in \cite{MaYMJ2010}.

\begin{proposition} \label{prop:coe}
The following are equivalent.
\begin{enumerate}
\renewcommand{\theenumi}{\roman{enumi}}
\renewcommand{\labelenumi}{\textup{(\theenumi)}}
\item One-sided subshifts 
$(X_{\Lambda_1},\sigma_{\Lambda_1})$
and
$(X_{\Lambda_2},\sigma_{\Lambda_2})$
are 
$({\frak L}_1,{\frak L}_2)$-continuously orbit equivalent.
\item
There exist an isomorphism
$\varphi:G_{{\frak L}_1}\longrightarrow G_{{\frak L}_2}$
of \'etale groupoids and a homeomorphism
$h:X_{\Lambda_1}\longrightarrow X_{\Lambda_2}$
such that
$\pi_{{\frak L}_2} \circ\varphi|_{X_{{\frak L}_1}} = h\circ\pi_{{\frak L}_1}$,
where $X_{{\frak L}_i}$ is identified with
the unit space $G_{{\frak L}_i}^0$ of the \'etale groupoid $G_{{\frak L}_i}, i=1,2.$
\end{enumerate}
\end{proposition}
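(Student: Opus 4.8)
The plan is to prove this as an equivalence between a dynamical statement (continuous orbit equivalence of the presentations) and a groupoid statement, following the classical pattern used for topological Markov shifts but now at the level of the Deaconu--Renault groupoids $G_{{\frak L}_i}$ built from the local homeomorphisms $\sigma_{{\frak L}_i}$ on $X_{{\frak L}_i}$.

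\medskip
For the implication (i) $\Longrightarrow$ (ii), suppose we are given the homeomorphisms $h_{\frak L}\colon X_{{\frak L}_1}\to X_{{\frak L}_2}$, $h_\Lambda\colon X_{\Lambda_1}\to X_{\Lambda_2}$ with $\pi_{{\frak L}_2}\circ h_{\frak L} = h_\Lambda\circ\pi_{{\frak L}_1}$, together with the continuous cocycle data $k_i,l_i\colon X_{{\frak L}_i}\to\Zp$ satisfying \eqref{eq:coelk1} and \eqref{eq:coelk2}. First I would iterate \eqref{eq:coelk1}: for each $p\ge 0$ there are continuous maps $k_1^{(p)},l_1^{(p)}\colon X_{{\frak L}_1}\to\Zp$ with $\sigma_{{\frak L}_2}^{k_1^{(p)}(x)}\bigl(h_{\frak L}(\sigma_{{\frak L}_1}^p(x))\bigr)=\sigma_{{\frak L}_2}^{l_1^{(p)}(x)}(h_{\frak L}(x))$; this is a routine induction using that $\sigma_{{\frak L}_2}$ is a local homeomorphism on the zero-dimensional space $X_{{\frak L}_2}$ (so one may locally cancel common powers). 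Then define $\varphi\colon G_{{\frak L}_1}\to G_{{\frak L}_2}$ on a typical element $(x,p-q,z)$ with $\sigma_{{\frak L}_1}^p(x)=\sigma_{{\frak L}_1}^q(z)$ by
\begin{equation*}
\varphi(x,p-q,z) = \bigl(h_{\frak L}(x),\ (l_1^{(p)}(x)-k_1^{(p)}(x)) - (l_1^{(q)}(z)-k_1^{(q)}(z)),\ h_{\frak L}(z)\bigr).
\end{equation*}
One checks this is well defined (independent of the representation $(p,q)$, using the cocycle identities again), continuous on each basic bisection $Z(U,p,q,V)$, a groupoid homomorphism, and that the analogous construction from $h_{\frak L}^{-1}$ and \eqref{eq:coelk2} furnishes the inverse; hence $\varphi$ is an isomorphism of \'etale groupoids. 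On the unit space $\varphi$ restricts to $h_{\frak L}$, and the intertwining $\pi_{{\frak L}_2}\circ\varphi|_{X_{{\frak L}_1}} = h_\Lambda\circ\pi_{{\frak L}_1}$ is exactly the compatibility hypothesis, with $h:=h_\Lambda$.

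\medskip
For (ii) $\Longrightarrow$ (i), given $\varphi$ and $h$ with $\pi_{{\frak L}_2}\circ\varphi|_{X_{{\frak L}_1}}=h\circ\pi_{{\frak L}_1}$, set $h_{\frak L}:=\varphi|_{X_{{\frak L}_1}}\colon X_{{\frak L}_1}\to X_{{\frak L}_2}$ (a homeomorphism since $\varphi$ is a groupoid isomorphism, carrying $G_{{\frak L}_1}^0$ onto $G_{{\frak L}_2}^0$) and $h_\Lambda:=h$; the required relation $\pi_{{\frak L}_2}\circ h_{\frak L}=h_\Lambda\circ\pi_{{\frak L}_1}$ is immediate. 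To recover the cocycle maps, observe that for $x\in X_{{\frak L}_1}$ the element $(\sigma_{{\frak L}_1}(x),1,x)\in G_{{\frak L}_1}$ maps under $\varphi$ to some $(h_{\frak L}(\sigma_{{\frak L}_1}(x)),\,m(x),\,h_{\frak L}(x))\in G_{{\frak L}_2}$; by definition of $G_{{\frak L}_2}$ there are $k,l\ge 0$ with $k-l=m(x)$ and $\sigma_{{\frak L}_2}^{k}(h_{\frak L}(\sigma_{{\frak L}_1}(x)))=\sigma_{{\frak L}_2}^{l}(h_{\frak L}(x))$. The point is that these $k,l$ can be chosen to depend continuously and locally constantly on $x$: the basic open bisections of $G_{{\frak L}_2}$ are the $Z(U,k,l,V)$, so $\varphi$ of a small enough neighbourhood of $(\sigma_{{\frak L}_1}(x),1,x)$ lands inside one such set, giving locally constant $k_1=k$, $l_1=l$ on $X_{{\frak L}_1}$; a compactness/clopen-refinement argument upgrades this to globally continuous $k_1,l_1\colon X_{{\frak L}_1}\to\Zp$ satisfying \eqref{eq:coelk1}. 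Applying the same reasoning to $\varphi^{-1}$ produces $k_2,l_2\colon X_{{\frak L}_2}\to\Zp$ with \eqref{eq:coelk2}, completing the construction.

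\medskip
The main obstacle I anticipate is the bookkeeping in the well-definedness and continuity of $\varphi$ in the direction (i) $\Longrightarrow$ (ii): one must show that the integer $(l_1^{(p)}(x)-k_1^{(p)}(x))-(l_1^{(q)}(z)-k_1^{(q)}(z))$ is genuinely independent of the choice of $p,q$ with $\sigma_{{\frak L}_1}^p(x)=\sigma_{{\frak L}_1}^q(z)$, and that the resulting map is continuous across the overlaps of the basic bisections. This rests on a careful iterated use of \eqref{eq:coelk1} together with the fact that $\sigma_{{\frak L}_i}$ are local homeomorphisms on zero-dimensional compact spaces, which is precisely the setting of \cite[Lemma 2.2]{MaDocMath2002}; I would isolate the cocycle computation as a short lemma before assembling $\varphi$. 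The remaining verifications (functoriality, that $\varphi^{-1}$ is built from $h_{\frak L}^{-1}$, and the unit-space restriction) are then formal.
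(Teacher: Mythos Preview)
Your argument is essentially the paper's own: in the direction (i) $\Rightarrow$ (ii) the paper also forms the iterated cocycles $k_1^p,l_1^p$ by summing along the orbit, sets $c_1=l_1-k_1$, and defines $\varphi(x,p-q,z)=(h_{\frak L}(x),\,c_1^p(x)-c_1^q(z),\,h_{\frak L}(z))$, while for (ii) $\Rightarrow$ (i) it pushes the generator $(x,1,\sigma_{{\frak L}_1}(x))$ through $\varphi$ and reads off $k_1,l_1$ from a finite cover by basic bisections $Z(U_j,l_j,k_j,V_j)$, exactly as you outline. One small slip: the element you use in (ii) $\Rightarrow$ (i) should be $(x,1,\sigma_{{\frak L}_1}(x))\in G_{{\frak L}_1}$ (equivalently $(\sigma_{{\frak L}_1}(x),-1,x)$), not $(\sigma_{{\frak L}_1}(x),1,x)$, which is generally not in the groupoid; the rest of your computation is unaffected.
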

\begin{proof}
(ii) $\Longrightarrow$ (i):
Suppose that  
there exist an isomorphism
$\varphi:G_{{\frak L}_1}\longrightarrow G_{{\frak L}_2}$
of \'etale groupoids and a homeomorphism
$h:X_{\Lambda_1}\longrightarrow X_{\Lambda_2}$
such that
$\pi_{{\frak L}_2} \circ\varphi|_{X_{{\frak L}_1}} = h\circ\pi_{{\frak L}_1}.$
As the shift spaces
 $X_{{\frak L}_i}$ are identified with
the unit spaces $G_{{\frak L}_i}^0$ of the groupoids $G_{{\frak L}_i}$
for $i=1,2$,
the  isomorphism
$\varphi:G_{{\frak L}_1}\longrightarrow G_{{\frak L}_2}$
of \'etale groupoids
yields a homeomorphism from
$X_{{\frak L}_1}$ onto $X_{{\frak L}_2}$,
which we denote by $h_{\frak L}$.
It satisfies
$\varphi(x,0,x) = (h_{\frak L}(x),0,h_{\frak L}(x))$ for $x \in X_{\frak L}$.
Since $(x,1,\sigma_{{\frak L}_1}(x))\in G_{{\frak L}_1}$
for $x \in X_{{\frak L}_1}$, we have
$\varphi(x,1,\sigma_{{\frak L}_1}(x)) \in G_{{\frak L}_2}$
and 
\begin{align*}
\varphi(x,1,\sigma_{{\frak L}_1}(x))
& = \varphi(x,0,x)
      \varphi(x,1,\sigma_{{\frak L}_1}(x)) 
      \varphi(\sigma_{{\frak L}_1}(x), 0, \sigma_{{\frak L}_1}(x)) \\
& =  (h_{\frak L}(x),0,h_{\frak L}(x)) \varphi(x,1,\sigma_{{\frak L}_1}(x)) 
      (h_{\frak L}(\sigma_{{\frak L}_1}(x)),0,h_{\frak L}(\sigma_{{\frak L}_1}(x))),
\end{align*}
one may find an integer $n_2(x) \in\Z$ such that 
\begin{equation}
\varphi(x,1,\sigma_{{\frak L}_1}(x))
=  (h_{\frak L}(x), n_2(x),h_{\frak L}(\sigma_{{\frak L}_1}(x))). \label{eq:n2}
\end{equation}
For $\alpha \in \Sigma, v_i^1 \in V_1$ 
we put a clopen set in $X_{{\frak L}_1}$
\begin{equation*}
U(\alpha,v_i^1) =
\{ (\alpha_n,u_n)_{n\in\N}\in X_{{\frak L}_1}\mid
\alpha_1 = \alpha, u^1_1 = v_i^1\},
\end{equation*}
whose characteristic function
$\chi_{U(\alpha,v_i^l)}$ is regarded as the projection
$S_\alpha E_i^1 S_\alpha^*$ in $\mathcal{O}_{{\frak L}_1}.$
We then have
\begin{equation*}
X_{{\frak L}_1} = \bigcup_{\alpha \in \Sigma} \bigcup_{i=1}^{m(1)}U(\alpha,v_i^1).
\end{equation*}
We put
\begin{equation*}
V(\alpha,v_i^1) =\sigma_{{\frak L}_1}(U(\alpha,v_i^1))
( = \{ (\alpha_n,u_n)_{n\in\N}\in X_{\frak L}\mid
A(v_i^1, \alpha_1, u^2_1) =1 \}),
\end{equation*}
where $u_1 =(u_1^l)_{l\in \N} \in \Omega_{\frak L}.$
Since 
\begin{equation*}
V(\alpha,v_i^1) =\bigcup_{A(v_i^1, \alpha_1, u_1^2) =1}U(\alpha_1,u^2_1),
\end{equation*}
the set 
$V(\alpha,v_i^1)$ is clopen in $X_{{\frak L}_1}.$
Hence  the set
\begin{equation*}
Z_1(U(\alpha, v_i^1), 1,0, V(\alpha,v_i^1))
 =\{
(x,1,z) \in G_{{\frak L}_1} \mid x \in U(\alpha,v_i^1), z \in V(\alpha, v_i^1), 
\sigma_{\frak L}(x) =z\}
\end{equation*}
is an open and compact subset of $G_{{\frak L}_1}$,
and so is 
$
\varphi(Z(U(\alpha, v_i^1), 1,0, V(\alpha,v_i^1)))
$
 in
$G_{{\frak L}_2}$.
Hence
\begin{equation}
\varphi(Z_1(U(\alpha, v_i^1), 1,0, V(\alpha,v_i^1)))
 =\bigcup_{j=1}^n
Z_2(U_j,l_j, k_j,V_j) \label{eq:varphiUV}
\end{equation}
for some clopen sets
$U_j, V_j \subset X_{{\frak L}_2}$
and $l_j,k_j \in \Zp$. 
Define
$k_{(\alpha,v_i^1)}(x) = k_j $
and
$l_{(\alpha,v_i^1)}(x) = l_j $
 for $x \in h_{\frak L}^{-1}(U_j)$.
The both functions
$k_{(\alpha,v_i^1)} $
and
$l_{(\alpha,v_i^1)}$
are continuous on
$U(\alpha,v_i^1)$.
For $x \in U(\alpha,v_i^1)$,
one has
$h_{\frak L}(x) \in U_j$ for some $j$ 
and
\begin{equation}
\varphi(x,1,\sigma_{{\frak L}_1}(x)) =
(h_{\frak L}(x), l_j-k_j, h_{\frak L}(\sigma_{{\frak L}_1}(x)))
\in Z_2(U_j,l_j, k_j,V_j) \label{eq:4.7}
\end{equation}
so that 
\begin{equation}
\sigma_{{\frak L}_2}^{k_j}(h_{\frak L}( \sigma_{{\frak L}_1}(x)))
= \sigma_{{\frak L}_2}^{l_j}(h_{\frak L}(x)),
\qquad 
x \in U(\alpha,v_i^1). \label{eq:4.8}
\end{equation}
Since 
$
\bigcup_{\alpha, v_i^l}\bigcup_{j=1}^n h_{\frak L}^{-1}(U_j) 
= X_{{\frak L}_1},
$
we have continuous maps
$k_1, l_1: X_{{\frak L}_1}\longrightarrow \Z$
such that 
\begin{equation*}
\sigma_{{\frak L}_2}^{k_1(x)}(h_{\frak L}( \sigma_{{\frak L}_1}(x)))
= \sigma_{{\frak L}_2}^{l_1(x)}(h_{\frak L}(x)),
\qquad 
x \in X_{{\frak L}_1}.
\end{equation*}
We know that there exist
continuous maps
$k_2, l_2:X_{{\frak L}_2}\longrightarrow \Z$ 
such that 
\begin{equation*}
\sigma_{{\frak L}_1}^{k_2(y)}(h_{\frak L}^{-1}(\sigma_{{\frak L}_2}(y)))
= 
\sigma_{{\frak L}_1}^{l_2(y)}(h_{\frak L}^{-1}(y)), \qquad y \in X_{{\frak L}_2}
\end{equation*}
in a similar way.
Therefore 
the one-sided subshifts 
$(X_{\Lambda_1},\sigma_{\Lambda_1})$
and
$(X_{\Lambda_2},\sigma_{\Lambda_2})$
are 
$({\frak L}_1,{\frak L}_2)$-continuously orbit equivalent.

(i) $\Longrightarrow$ (ii):
For a function $f:X_{\frak L}\longrightarrow \Z$ and $n \in \N$,
we set
$f^n(x) = \sum_{i=0}^{n-1}f(\sigma_{\frak L}^i(x))$ for $x \in X_{\frak L}.$
Suppose that the one-sided subshifts 
$(X_{\Lambda_1},\sigma_{\Lambda_1})$
and
$(X_{\Lambda_2},\sigma_{\Lambda_2})$
are 
$({\frak L}_1,{\frak L}_2)$-continuously orbit equivalent.
There exist
 homeomorphisms
$h_{\frak L}: X_{{\frak L}_1}\longrightarrow X_{{\frak L}_2}$
and
$h_{\Lambda}: X_{\Lambda_1}\longrightarrow X_{\Lambda_2}$
satisfying the equalities
$\pi_{{\frak L}_2}\circ h_{\frak L} = h_\Lambda\circ\pi_{{\frak L}_1}$
and
\eqref{eq:coelk1}, \eqref{eq:coelk2}. 
We will show that 
for $(x,n,z) \in G_{{\frak L}_1}$, 
there exists a continuous function
$n_1: G_{{\frak L}_1}\longrightarrow \Z$
such that 
$(h_{\frak L}(x), n_1(x,n,z), h_{\frak L}(z)) \in G_{{\frak L}_2}$.
For $(x,n,z) \in G_{{\frak L}_1}$,
there exist $p,q \in \Zp$ such that
$p-q =n$ and
$\sigma_{{\frak L}_1}^p(x) =\sigma_{{\frak L}_1}^q(z). $
By the equality \eqref{eq:coelk1}, we have
\begin{equation*}
\sigma_{{\frak L}_2}^{k_1^p(x)}(h_{\frak L}(\sigma_{{\frak L}_1}^p(x)))
 =  
\sigma_{{\frak L}_2}^{l_1^p(x)}(h_{\frak L}(x)) \quad
\text{ and }
\quad
\sigma_{{\frak L}_2}^{k_1^q(z)}(h_{\frak L}(\sigma_{{\frak L}_1}^q(z)))
 =  
\sigma_{{\frak L}_2}^{l_1^q(z)}(h_{\frak L}(z)).
\end{equation*}
Hence the equality
\begin{equation*}
\sigma_{{\frak L}_2}^{l_1^p(x)+k_1^q(z)}(h_{\frak L}(x)) 
 = 
\sigma_{{\frak L}_2}^{l_1^q(z)+k_1^p(x)}(h_{\frak L}(z))
\end{equation*}
holds.
This implies that
$(h_{\frak L}(x), 
(l_1^p(x)+k_1^q(z))- (l_1^q(z)+k_1^p(x)), 
h_{\frak L}(z)) $
gives rise to an element of  $G_{{\frak L}_2}$.
Put
$c_1(x) = l_1(x) - k_1(x)$ and hence
$c_1^p(x) =l_1^p(x) -k_1^p(x)$
and
$c_1^q(z) =l_1^q(z) -k_1^q(z).$
Define
the function $n_1: G_{{\frak L}_1}\longrightarrow \Z$ 
 by
$n_1(x,p-q, z) =(l_1^p(x)+k_1^q(z))- (l_1^q(z)+k_1^p(x)) =c_1^p(x) -c_1^q(z).$
It is routine to check that 
$n_1(x,p-q, z) $ is well-defined,
so that 
\begin{equation*}
(h_{\frak L}(x), c_1^p(x) -c_1^q(z), 
h_{\frak L}(z)) \in G_{{\frak L}_2}.
\end{equation*} 
It is not difficult to see that
the correspondence
\begin{equation}
(x,p-q,z) \in G_{{\frak L}_1} \longrightarrow
(h_{\frak L}(x), c_1^p(x) -c_1^q(z), 
h_{\frak L}(z)) \in G_{{\frak L}_2}. \label{eq:4.9}
\end{equation} 
defines a homomorphism 
$\varphi: G_{{\frak L}_1}\longrightarrow  G_{{\frak L}_2}$
of groupoids.

We set $c_2:X_{{\frak L}_2}\longrightarrow \Z$ 
by $c_2(y) = l_2(y) - k_2(y)$ for $y \in X_{{\frak L}_2}$.
By the equality \eqref{eq:coelk2}, 
we obtain a homomorphism
\begin{equation*}
(y,p-q,w) \in G_{{\frak L}_1} \longrightarrow
(h_{\frak L}^{-1}(y), c_2^p(y) -c_2^q(w), 
h_{\frak L}^{-1}(w)) \in G_{{\frak L}_1}
\end{equation*} 
in a similar way to \eqref{eq:4.9},
which is the inverse of $\varphi$.
%
Since
both functions
$c_1:X_{{\frak L}_1}\longrightarrow \Z$
$c_2:X_{{\frak L}_2}\longrightarrow \Z$
are continuous,
we have the isomorphism
$\varphi:G_{{\frak L}_1}\longrightarrow G_{{\frak L}_2}$
of \'etale groupoids,
which satisfies 
$\pi_{{\frak L}_2} \circ\varphi|_{X_{{\frak L}_1}} = h_\Lambda\circ\pi_{{\frak L}_1}$.
Therefore we get the assertion (ii).
\end{proof}


By Proposition \ref{prop:onesidedgroupoid} and Proposition \ref{prop:coe}, 
we may prove Theorem \ref{thm:onesidedcoe} in the following way.

\medskip

\noindent
{\it Proof of Theorem \ref{thm:onesidedcoe}.}
(i) $\Longleftrightarrow$ (ii) follows from Proposition \ref{prop:coe}.

(ii) $\Longrightarrow$ (iii):
Let
$\varphi:G_{{\frak L}_1}\longrightarrow G_{{\frak L}_2}$
be the isomorphism of \'etale groupoids and 
$h:X_{\Lambda_1}\longrightarrow X_{\Lambda_2}$
the  homeomorphism
in the statement (ii).
Take a continuous function $c_{{\frak L}_2}:G_{{\frak L}_2} \longrightarrow \Z$
as $f_2$ in Proposition \ref{prop:onesidedgroupoid},
and put $f_1 = c_{{\frak L}_2}\circ \varphi$.
By Proposition \ref{prop:onesidedgroupoid} (i) $\Longrightarrow$ (ii),
there exists an isomorphism
$\Phi:{\mathcal{O}}_{{\frak L}_1}\longrightarrow {\mathcal{O}}_{{\frak L}_2}
$ of $C^*$-algebras
such that 
$\Phi({\mathcal{D}}_{{\frak L}_1})={\mathcal{D}}_{{\frak L}_2}.
$ 
By the construction of $\Phi$ in the proof of Proposition \ref{prop:onesidedgroupoid} (i) $\Longrightarrow$ (ii),
one knows that 
$\Phi(f) = f\circ\varphi^{-1}$ for $f \in \mathcal{D}_{{\frak L}_1}$.
The  homeomorphism
$h:X_{\Lambda_1}\longrightarrow X_{\Lambda_2}$
satisfying 
$\pi_{{\frak L}_2} \circ\varphi|_{X_{{\frak L}_1}} = h\circ\pi_{{\frak L}_1}$
guarantees 
the condition 
$\Phi(\mathcal{D}_{\Lambda_1}) = \mathcal{D}_{\Lambda_2}.
$

(iii) $\Longrightarrow$ (ii):
Assume (iii).
By Lemma \ref{lem:masa}, the isomorphism
$\Phi:{\mathcal{O}}_{{\frak L}_1}\longrightarrow {\mathcal{O}}_{{\frak L}_2}$
such that 
$
\Phi({\mathcal{D}}_{\Lambda_1})={\mathcal{D}}_{\Lambda_2}
$ 
satisfies
$\Phi({\mathcal{D}}_{{\frak L}_1})={\mathcal{D}}_{{\frak L}_2}.
$ 
As in the proof of (ii) $\Longrightarrow$ (i) of Proposition \ref{prop:onesidedgroupoid},
put
 $\varphi = \phi_{{\frak L}_2}^{-1}\circ\Phi\circ\phi_{{\frak L}_1}:
G_{{\frak L}_1}\longrightarrow G_{{\frak L}_2}$
and 
$f_1 = c_{{\frak L}_2}\circ\varphi$.
Hence
$\varphi:G_{{\frak L}_1}\longrightarrow G_{{\frak L}_2}$
yields an isomorphism of \'etale groupoids by 
(ii) $\Longrightarrow$ (i) of Proposition \ref{prop:onesidedgroupoid}.
It is easy to see that the condition
$\Phi(\mathcal{D}_{\Lambda_1}) =
\mathcal{D}_{\Lambda_2}$
gives rise to a homeomorphism
$h:X_{\Lambda_1}\longrightarrow X_{\Lambda_2}$
such that 
$\pi_{{\frak L}_2}\circ\varphi|_{G_{{\frak L}_1}^0} = h\circ\pi_{{\frak L}_1}.$
\qed

We remark that the equivalence between (i) and (iii) in Theorem \ref{thm:onesidedcoe}
was already seen in \cite[Theorem 6.6]{MaYMJ2010}.
We gave its proof as above in this paper  for the sake of completeness.
This fact was informed the author by the referee.

\section{Eventual conjugacy of one-sided subshifts}
In this section, we will study eventual conjugacy of one-sided subshifts.
\begin{definition} \label{def:lambdaevent}
One-sided subshifts 
$(X_{\Lambda_1},\sigma_{\Lambda_1})$
and
$(X_{\Lambda_2},\sigma_{\Lambda_2})$
are said to be
$({\frak L}_1,{\frak L}_2)$-{\it eventually conjugate}\/ if
there exist homeomorphisms
$h_{\frak L}: X_{{\frak L}_1}\longrightarrow X_{{\frak L}_2}$
and
$h_{\Lambda}: X_{\Lambda_1}\longrightarrow X_{\Lambda_2}$
satisfying 
$\pi_{{\frak L}_2} \circ h_{\frak L} =h_{\Lambda}\circ\pi_{{\frak L}_1}$
and  continuous maps
$k_i : X_{{\frak L}_i} \longrightarrow \Zp, i=1,2$ 
satisfying
\begin{align}
\sigma_{{\frak L}_2}^{k_1(x)}(h_{\frak L}(\sigma_{{\frak L}_1}(x)))
& = 
\sigma_{{\frak L}_2}^{k_1(x) +1}(h_{\frak L}(x)),\qquad x \in X_{{\frak L}_1}, 
\label{eq:eventconj1} \\ 
\sigma_{{\frak L}_1}^{k_2(y)}(h_{\frak L}^{-1}(\sigma_{{\frak L}_2}(y)))
& = 
\sigma_{{\frak L}_1}^{k_2(y) +1}(h_{\frak L}^{-1}(y)),\qquad y \in X_{{\frak L}_2}. 
\label{eq:eventconj2}
\end{align}
We note that if $(X_{\Lambda_1},\sigma_{\Lambda_1})$
and
$(X_{\Lambda_2},\sigma_{\Lambda_2})$
are 
$({\frak L}_1,{\frak L}_2)$-eventually conjugate, 
then we may take the above 
functions $k_1,k_2$ as constant numbers because 
both $X_{{\frak L}_1}$ and $X_{{\frak L}_2}$ are compact.
\end{definition}
Concerning eventual conjugacy, we provide the following proposition.
Recall that the continuous homomorphism
$c_{\frak L}:G_{\frak L} \longrightarrow
\Z$ is defined by 
$c_{\frak L}(x,n,z) =n$.
\begin{proposition} \label{prop:eventconj}
The following are equivalent.
\begin{enumerate}
\renewcommand{\theenumi}{\roman{enumi}}
\renewcommand{\labelenumi}{\textup{(\theenumi)}}
\item One-sided subshifts 
$(X_{\Lambda_1},\sigma_{\Lambda_1})$
and
$(X_{\Lambda_2},\sigma_{\Lambda_2})$
are 
$({\frak L}_1,{\frak L}_2)$-eventually conjugate.
\item
There exist an isomorphism
$\varphi:G_{{\frak L}_1}\longrightarrow G_{{\frak L}_2}$
of \'etale groupoids and a homeomorphism
$h:X_{\Lambda_1}\longrightarrow X_{\Lambda_2}$
such that
$\pi_{{\frak L}_2} \circ\varphi|_{X_{{\frak L}_1}} = h\circ\pi_{{\frak L}_1}$,
and
$c_{{\frak L}_2} \circ \varphi = c_{{\frak L}_1}$
where $X_{{\frak L}_i}$ 
is identified with
the unit space $G_{{\frak L}_i}^0$ 
of the groupoid $G_{{\frak L}_i}, i=1,2.$
\end{enumerate}
\end{proposition}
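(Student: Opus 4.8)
This proposition is a refinement of Proposition \ref{prop:coe}: $({\frak L}_1,{\frak L}_2)$-eventual conjugacy is precisely $({\frak L}_1,{\frak L}_2)$-continuous orbit equivalence in which the orbit cocycles are normalized so that $l_i = k_i+1$, and the extra requirement $c_{{\frak L}_2}\circ\varphi = c_{{\frak L}_1}$ is the groupoid translation of exactly that normalization. The plan is therefore to rerun the two implications in the proof of Proposition \ref{prop:coe}, tracking the additional rigidity that appears in each direction; no new analytic input is needed beyond what that proof already supplies.

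For (i) $\Longrightarrow$ (ii), suppose $(X_{\Lambda_1},\sigma_{\Lambda_1})$ and $(X_{\Lambda_2},\sigma_{\Lambda_2})$ are $({\frak L}_1,{\frak L}_2)$-eventually conjugate, with homeomorphisms $h_{\frak L}, h_\Lambda$ and continuous maps $k_1,k_2$ as in Definition \ref{def:lambdaevent}. Comparing \eqref{eq:eventconj1} and \eqref{eq:eventconj2} with \eqref{eq:coelk1} and \eqref{eq:coelk2}, these are continuous orbit equivalence data with $l_i := k_i+1$, so the cocycles $c_i := l_i - k_i$ used in the proof of Proposition \ref{prop:coe} are identically $1$, whence $c_i^p \equiv p$. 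Then the groupoid isomorphism \eqref{eq:4.9} produced there takes the simple form
\[
\varphi(x,p-q,z) = (h_{\frak L}(x),\, p-q,\, h_{\frak L}(z)), \qquad (x,p-q,z) \in G_{{\frak L}_1},
\]
whose well-definedness is now trivial, and since $c_{{\frak L}_i}(x,n,z) = n$ this gives $c_{{\frak L}_2}\circ\varphi = c_{{\frak L}_1}$. The identity $\pi_{{\frak L}_2}\circ\varphi|_{X_{{\frak L}_1}} = h_\Lambda\circ\pi_{{\frak L}_1}$ is exactly what Proposition \ref{prop:coe} delivers, so (ii) holds with $h = h_\Lambda$.

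For (ii) $\Longrightarrow$ (i), given $\varphi$ and $h$ as in (ii), Proposition \ref{prop:coe} already yields continuous orbit equivalence data; it remains to see that $c_{{\frak L}_2}\circ\varphi = c_{{\frak L}_1}$ lets us choose the transition maps with $l_i = k_i+1$. Following the proof of (ii) $\Longrightarrow$ (i) of Proposition \ref{prop:coe}, I would decompose $X_{{\frak L}_1} = \bigcup_{\alpha,i} U(\alpha,v_i^1)$ into clopen sets on which $\sigma_{{\frak L}_1}$ is injective, form the compact open bisections $Z_1(U(\alpha,v_i^1),1,0,V(\alpha,v_i^1))$, and write $\varphi(Z_1(U(\alpha,v_i^1),1,0,V(\alpha,v_i^1))) = \bigcup_{j=1}^n Z_2(U_j,l_j,k_j,V_j)$ as in \eqref{eq:varphiUV}. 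For $x \in U(\alpha,v_i^1)$ with $h_{\frak L}(x) \in U_j$, \eqref{eq:4.7} gives $\varphi(x,1,\sigma_{{\frak L}_1}(x)) = (h_{\frak L}(x), l_j-k_j, h_{\frak L}(\sigma_{{\frak L}_1}(x)))$, and $c_{{\frak L}_2}\circ\varphi = c_{{\frak L}_1}$ forces $l_j - k_j = c_{{\frak L}_1}(x,1,\sigma_{{\frak L}_1}(x)) = 1$; hence \eqref{eq:4.8} becomes $\sigma_{{\frak L}_2}^{k_j}(h_{\frak L}(\sigma_{{\frak L}_1}(x))) = \sigma_{{\frak L}_2}^{k_j+1}(h_{\frak L}(x))$. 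Setting $k_1(x) = k_j$ on $h_{\frak L}^{-1}(U_j)$ gives a locally constant $k_1$ satisfying \eqref{eq:eventconj1} (and by compactness of $X_{{\frak L}_1}$ one may even take $k_1$ constant). Applying the same argument to $\varphi^{-1}$, which satisfies $c_{{\frak L}_1}\circ\varphi^{-1} = c_{{\frak L}_2}$, produces $k_2$ satisfying \eqref{eq:eventconj2}, and $h_\Lambda := h$ satisfies $\pi_{{\frak L}_2}\circ h_{\frak L} = h_\Lambda\circ\pi_{{\frak L}_1}$ by hypothesis. The only place deserving care is this bookkeeping — that "the cocycle $\varphi$ must respect is $c_{\frak L}$" is exactly "$l_i = k_i+1$" — but there is no genuine obstacle beyond Proposition \ref{prop:coe} itself.
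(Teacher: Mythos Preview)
Your proposal is correct and follows essentially the same approach as the paper's own proof: both directions piggyback on the proof of Proposition~\ref{prop:coe}, using that eventual conjugacy is continuous orbit equivalence with $l_i=k_i+1$, so that $c_i\equiv 1$ and hence $c_i^p\equiv p$, which forces the groupoid isomorphism \eqref{eq:4.9} to preserve the middle coordinate; conversely the cocycle condition $c_{{\frak L}_2}\circ\varphi=c_{{\frak L}_1}$ applied to $(x,1,\sigma_{{\frak L}_1}(x))$ pins down $l_j-k_j=1$ in the decomposition \eqref{eq:varphiUV}. The only cosmetic difference is that the paper first replaces $k_1$ by a constant $K$ via compactness before computing $c_1^p$, whereas you observe directly that $c_1\equiv 1$ regardless---your version is marginally cleaner.
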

\begin{proof}
(ii) $\Longrightarrow$ (i):
We follow the proof of 
 (ii) $\Longrightarrow$ (i) of Proposition \ref{prop:coe}
and further assume that $c_{{\frak L}_2}\circ\varphi = c_{{\frak L}_1}$.
In the proof, we see that 
the equality \eqref{eq:n2} holds.
Since
\begin{equation*}
c_{{\frak L}_2}(\varphi(x,1,\sigma_{{\frak L}_1}(x)))
= 
c_{{\frak L}_2}(h_{\frak L}(x), l_j-k_j, h_{\frak L}(\sigma_{{\frak L}_1}(x)))
= l_j -k_j
\end{equation*}
and
$c_{{\frak L}_1}(x, 1,\sigma_{{\frak L}_1}(x)) =1$,
we have
$l_j -k_j =1$ for all $x \in h_{\frak L}^{-1}(U_j)$, so that
one may take
$l_1(x) = k_1(x) +1$ for all $x \in X_{{\frak L}_1}$. 
We know that $l_2(y) = k_2(y) +1$ in a similar way.
Thus 
$(X_{\Lambda_1},\sigma_{\Lambda_1})$
and
$(X_{\Lambda_2},\sigma_{\Lambda_2})$
are $({\frak L}_1,{\frak L}_2)$-eventually conjugate.

(i) $\Longrightarrow$ (ii):
We follow the proof of (i) $\Longrightarrow$ (ii) of Proposition \ref{prop:coe},
and  further assume that 
$(X_{\Lambda_1},\sigma_{\Lambda_1})$
and
$(X_{\Lambda_2},\sigma_{\Lambda_2})$
are 
$({\frak L}_1,{\frak L}_2)$-eventually conjugate.
We may assume that there exists a constant number 
$K \in \Zp$ such that 
\begin{align*}
\sigma_{{\frak L}_2}^{K}(h_{\frak L}(\sigma_{{\frak L}_1}(x)))
& = 
\sigma_{{\frak L}_2}^{K +1}(h_{\frak L}(x)),\qquad x \in X_{{\frak L}_1}, 
\\ 
\sigma_{{\frak L}_1}^{K}(h_{\frak L}^{-1}(\sigma_{{\frak L}_2}(y)))
& = 
\sigma_{{\frak L}_1}^{K +1}(h_{\frak L}^{-1}(y)), \qquad y \in X_{{\frak L}_2}. 
\end{align*}
In the proof of  (i) $\Longrightarrow$ (ii) of Proposition \ref{prop:coe},
we may take $k_1(x) =K, l_1(x)  =K+1$ for all $x \in X_{{\frak L}_1}$.
 It then follows that
\begin{equation*}
k_1^p(x) =\sum_{n=0}^{p-1}k_1(\sigma_{{\frak L}_1}(x)) = p\cdot K
\end{equation*}
and 
$l_1^p(x) = p\cdot (K+1)$
similarly.
Hence we have
$c_1^p(x) = p, c_1^q(z) =q$ so that 
\begin{equation*}
\varphi(x,p-q,z) = (h_{\frak L}(x), p-q, h_{\frak L}(z)).
\end{equation*}
Therefore
\begin{equation*}
(c_{{\frak L}_2}\circ\varphi)(x,p-q,z) 
= c_{{\frak L}_2}(h_{\frak L}(x), p-q, h_{\frak L}(z))
=p-q 
=c_{{\frak L}_1}(x,p-q,z),
\end{equation*}
and hence
$c_{{\frak L}_2}\circ\varphi = c_{{\frak L}_1}$.
\end{proof}

By Proposition \ref{prop:onesidedgroupoid} 
and Proposition \ref{prop:eventconj}, 
we give a proof of Theorem \ref{thm:eventconj}.

\medskip

\noindent
{\it Proof of Theorem \ref{thm:eventconj}.}
The equivalence between (i) and (ii) follows from Proposition \ref{prop:eventconj}.

Assume the condition (ii).
By Proposition \ref{prop:onesidedgroupoid} (i) $\Longrightarrow$ (ii),
there exists an isomorphism
$\Phi:{\mathcal{O}}_{{\frak L}_1}\longrightarrow {\mathcal{O}}_{{\frak L}_2}
$ 
of $C^*$-algebras
such that 
$
\Phi({\mathcal{D}}_{{\frak L}_1})={\mathcal{D}}_{{\frak L}_2}
$
and
$
\Phi \circ \rho_t^{{\frak L}_1} =\rho_t^{{\frak L}_2}\circ \Phi, \, t \in \T.
$
The  isomorphism
$\Phi:{\mathcal{O}}_{{\frak L}_1}\longrightarrow {\mathcal{O}}_{{\frak L}_2}
$ defined
in the proof of 
Proposition \ref{prop:onesidedgroupoid} (i) $\Longrightarrow$ (ii)
satisfies 
$\Phi(f) = f \circ \varphi^{-1}$
for
$f \in C_c(G_{\frak L}).$
Hence the additional condition  
$\pi_{{\frak L}_2} \circ\varphi|_{X_{{\frak L}_1}} = h\circ\pi_{{\frak L}_1}$
ensures the condition
$
\Phi({\mathcal{D}}_{{\Lambda}_1})={\mathcal{D}}_{{\Lambda}_2},
$
showing the assertion (iii).

Conversely,
assume the condition (iii).
By Lemma \ref{lem:masa}, the isomorphism
$\Phi:{\mathcal{O}}_{{\frak L}_1}\longrightarrow {\mathcal{O}}_{{\frak L}_2}$
such that 
$
\Phi({\mathcal{D}}_{\Lambda_1})={\mathcal{D}}_{\Lambda_2}
$ 
satisfies
$\Phi({\mathcal{D}}_{{\frak L}_1})={\mathcal{D}}_{{\frak L}_2}.
$ 
By Proposition \ref{prop:onesidedgroupoid} (ii) $\Longrightarrow$ (i),
there exists an isomorphism
$\varphi:G_{{\frak L}_1}\longrightarrow G_{{\frak L}_2}$
of \'etale groupoids
such that 
$c_{{\frak L}_2} \circ \varphi = c_{{\frak L}_1}$. 
Since 
$\Phi({\mathcal{D}}_{{\Lambda}_1})={\mathcal{D}}_{{\Lambda}_2}$
and the algebras ${\mathcal{D}}_{{\Lambda}_i}$
are naturally identified with
$C(X_{{\Lambda}_i}), i=1,2$ respectively,
There exists a 
homeomorphism
$h:X_{{\frak L}_1}\longrightarrow X_{{\frak L}_2}$
such that 
$\Phi(f) = f \circ h^{-1}$ 
for 
$f\in C(X_{{\frak L}_1}).$
As 
$\Phi({\mathcal{D}}_{{\frak L}_1})={\mathcal{D}}_{{\frak L}_2}$
and the inclusion
${\mathcal{D}}_{{\Lambda}_i} \hookrightarrow {\mathcal{D}}_{{\frak L}_i}$
is induced by 
the factor map
$\pi_{{\frak L}_i}: X_{{\frak L}_i} \longrightarrow X_{\Lambda_i}, i=1,2$
we know that 
$\pi_{{\frak L}_2} \circ\varphi|_{X_{{\frak L}_1}} = h\circ\pi_{{\frak L}_1}.$
Hence we get the assertion (ii).
\qed

\section{Stabilization of $\lambda$-graph systems}

For a left-resolving $\lambda$-graph system
${\frak L} =(V,E,\lambda,\iota)$ over $\Sigma$,
we will construct
its stabilization
$\widetilde{\frak L} =(\widetilde{V},\widetilde{E}, \tilde{\lambda},\tilde{\iota})$
and the associated groupoid
$G_{\widetilde{\frak L}}$
such that 
its groupoid $C^*$-algebra
$C^*(G_{\widetilde{\frak L}})$ which is written
${\mathcal{O}}_{\widetilde{\frak L}}$, 
its canonical maximal abelian $C^*$-subalgebra
${\mathcal{D}}_{\widetilde{\frak L}}$,
and its gauge action
$\rho^{\widetilde{\frak L}}$
are isomorphic to
$\OFL\otimes\K$,
$\DFL\otimes\C$,
and
$\rho^{\frak L}\otimes\id$,
respectively.
The idea of the construction of 
$\widetilde{\frak L}$ is essentially due to the Tomforde's construction 
for directed graphs in 
\cite{Tomforde2}.
For each vertex $v_i^l \in V_l$ in $\frak L$,
we set
$v_i^l(0) =v_i^l$ and
attach a finite sequence of edges 
$e_{k,v_i^l}, k=1,2,\dots,p$ and
vertices
$v_i^l(k), k=1,2,\dots,p$ such as
\begin{equation*}
\begin{CD}
v_i^l(p) \; @>e_{p,v_i^l}>>\;  
v_i^l(p-1)\; @>e_{p-1,v_i^l}>>\; \cdots 
@>e_{3,v_i^l}>>\;
v_i^l(2)\; @>e_{2,v_i^l}>>\;  
v_i^l(1)\; @>e_{1,v_i^l}>>\; v_i^l(0) =v_i^l  
\end{CD}
\end{equation*}
Hence
the new edges and new vertices satisfy
\begin{equation*}
s(e_{n,v_i^l}) = v_i^l(n),\quad
t(e_{n,v_i^l}) = v_i^l(n-1) \quad \text{ for }
\qquad n=1,2,\dots,p.
\end{equation*}
 They are called a head of the vertex $v_i^l$.
For an 
$\iota$-orbit
$u = (u^n)_{n\in \Zp} \in \Omega_{\frak L}$,
we may consider a sequence of heads
\begin{equation*}
\begin{CD}
u^n(p) \; @>e_{p,u^n}>>\;  
u^n(p-1)\; @>e_{p-1,u^n}>>\; \cdots 
@>e_{3,u^n}>>\;
u^n(2)\; @>e_{2,u^n}>>\;  
u^n(1)\; @>e_{1,u^n}>>\; u^n(0) =u^n,  
\end{CD}
\end{equation*}
and  extend the $\iota$-map
to the heads by setting
\begin{equation*}
\tilde{\iota}(u^{n+1}(k)) =u^{n}(k), \qquad k=1,\dots,p.
\end{equation*}
Then  a head of the $\iota$-orbit
$u = (u^n)_{n\in \Zp} \in \Omega_{\frak L}$
is defined and written 
\begin{equation*}
\begin{CD}
u(p) \; @>e_{p,u}>>\;  
u(p-1)\; @>e_{p-1,u}>>\; \cdots 
@>e_{3,u}>>\;
u(2)\; @>e_{2,u}>>\;  
u(1)\; @>e_{1,u}>>\; u(0) =u.  
\end{CD}
\end{equation*}
For $x = (\alpha_i, u_i)_{i\in \N} \in X_{\frak L}$
so that 
$(u_i,\alpha_{i+1},u_{i+1}) \in E_{\frak L}$
and
$(u_0,\alpha_{1},u_{1}) \in E_{\frak L}$
for some $u_0\in \Omega_{\frak L}$,
and $p \in \Zp$,
define $e_{[p]}x$ by 
\begin{equation*}
\begin{CD}
u_0(p) \; @>e_{p,u_0}>>\;  
u_0(p-1)\; @>e_{p-1,u_0}>>\; \cdots 
@>e_{3,u_0}>>\;
u_0(2)\; @>e_{2,u_0}>>\;  
u_0(1)\; @>e_{1,u_0}>>\; u_0(0) =u_0.  
\end{CD}
\end{equation*}
We define the shift ${\sigma}_{\widetilde{\frak L}}^k, k \in \Zp$
on $e_{[p]}x$ by 
\begin{equation*}
{\sigma}_{\widetilde{\frak L}}^k(e_{[p]}x)
=
\begin{cases}
e_{[p-k]}x & \text{ if }  p >k, \\
\sigma_{\frak L}^{k-p}(x) & \text{ if } p\le k.
\end{cases}
\end{equation*}
Define the groupoid
$G_{\widetilde{\frak L}}$ by setting
\begin{equation}
G_{\widetilde{\frak L}}
=\{
(e_{[p]}x, p-q, n, e_{[q]}z ) \mid
(x,n,z) \in G_{\frak L},  p,q \in \Zp \}.
\label{eq:GLtilde}
\end{equation}
The product and the inverse are defined by
\begin{equation*}
 (e_{[p]}x, p-q, n, e_{[q]}z )\cdot (e_{[p']}x', p'-q', n', e_{[q']}z' ) 
= 
(e_{[p]}x, p-q', n+n', e_{[q']}z' ) 
\end{equation*}
if $ q =p',\, z = x',$
and
\begin{equation*}
(e_{[p]}x, p-q, n, e_{[q]}z )^{-1} = (e_{[q]}z, q-p, -n, e_{[p]}x ).
\end{equation*}
The unit space 
$G_{\widetilde{\frak L}}^0$
is defined by
\begin{equation} \label{eq:Gtilde0}
G_{\widetilde{\frak L}}^0
=\{
(e_{[p]}x,0,0,e_{[p]}x)\in G_{\widetilde{\frak L}} \mid x \in X_{\frak L}, p \in \Zp\}.
\end{equation}
Define the groupoid homomorphism
$c_{{\widetilde{\frak L}}}:G_{\widetilde{\frak L}}\longrightarrow \Z$
by setting
\begin{equation} \label{eq:ctildel}
c_{{\widetilde{\frak L}}}(e_{[p]}x, p-q, n, e_{[q]}z ) = n-p+q,  \qquad 
(e_{[p]}x, p-q, n, e_{[q]}z )\in G_{\widetilde{\frak L}}.
\end{equation}
Let $G_{\Zp}$ be the \'etale groupoid $\Zp\times \Zp$ 
defined by the groupoid operations
\begin{gather*}
(m,n)\cdot(k,l) = (m,l) \text{ if } n=k, \text{ and}, \\
(m,n)^{-1} = (n,m) \text{ for } (m,n), (k,l) \in G_{\Zp}.
\end{gather*}
The set 
$G_{\Zp}$ is endowed with discrete topology. 
It is easy to see that the correspondence
\begin{equation*}
(e_{[p]}x, p-q, n, e_{[q]}z ) \in G_{\widetilde{\frak L}}
\longrightarrow
((x, n, z ), (p,q)) \in G_{\frak L} \times G_{\Zp} 
\end{equation*}
yields an isomorphism of groupoids.
Through this isomorphism, 
we may endow 
$G_{\widetilde{\frak L}}
$
with a topology of  
$G_{\widetilde{\frak L}}
$ 
induced from the product topology of the  \'etale groupoid
$G_{\frak L} \times G_{\Zp},$
so that  
$G_{\widetilde{\frak L}}$
is  isomorphic to
the product groupoid $G_{\frak L} \times G_{\Zp}$
as \'etale groupoids.
The unit space
$G_{\widetilde{\frak L}}^0$
is naturally identified with
$X_{\frak L}\times\Zp$.

Let $X_\Lambda$ be the one-sided subshift presented by ${\frak L}$.
For $\alpha = (\alpha_i)_{i\in \N} \in X_\Lambda$ and
$p \in \Zp$, 
we similarly define the head 
$e_{[p]}\alpha$
of $(\alpha_i)_{i\in \N}$
with length $p$ and define the space
\begin{equation*}
\widetilde{X}_\Lambda 
=\{e_{[p]}\alpha \mid \alpha = (\alpha_i)_{i\in \N} \in X_\Lambda, p \in \Zp\}
\end{equation*}  
that is identified with $X_\Lambda \times \Zp$
with its product topology.
Hence there exists a natural continuous surjection
\begin{equation}\label{eq:pitilde} 
\tilde{\pi}_{\frak L}: 
G_{\widetilde{\frak L}}^0 = X_{\frak L}\times\Zp
\longrightarrow 
\widetilde{X}_\Lambda = X_\Lambda\times \Zp
\end{equation}
that is defined by
$\tilde{\pi}_{\frak L}(x,m) = (\pi_{\frak L}(x),m)$ for $(x,m) \in X_{\frak L}\times\Zp.
$
We then see that there exists an inclusion relation 
\begin{equation*} 
C_0(\widetilde{X}_\Lambda) 
= C(X_\Lambda) \otimes C_0(\Zp) \hookrightarrow 
C(X_{\frak L}) \otimes C_0(\Zp) = C^*(G_{\widetilde{\frak L}}^0)
\end{equation*}
induced by 
$\tilde{\pi}_{\frak L}: 
G_{\widetilde{\frak L}}^0 \longrightarrow 
\widetilde{X}_\Lambda.
$

Let $f: G_{\frak L}\longrightarrow \Z$ 
be a continuous groupoid homomorphism.
Define a continuous groupoid homomorphism
$\tilde{f}: G_{\widetilde{\frak L}}\longrightarrow \Z$ by setting
\begin{equation*}
\tilde{f}(e_{[p]}x, p-q, n, e_{[q]}z ) = f(x, n, z )
\quad
\text{ for } 
(e_{[p]}x, p-q, n, e_{[q]}z ) \in G_{\widetilde{\frak L}}.
\end{equation*}
Recall that $\K$ denotes the
$C^*$-algebra of compact operators on the separable infinite dimensional Hilbert space
$\ell^2(\Zp)$ and
$\C (=C_0(\Zp))$ its maximal commutative $C^*$-subalgebra  
consisting of diagonal operators
on $\ell^2(\Zp)$.
Denote by 
$C^*(G_{\widetilde{\frak L}})$ 
the $C^*$-algebra of the \'etale groupoid $G_{\widetilde{\frak L}}.$

\begin{proposition}\label{prop:6.1}
There exists an isomorphism 
$\Theta:C^*(G_{\widetilde{\frak L}}) \longrightarrow
C^*(G_{\frak L} )\otimes \K$
of $C^*$-algebras such that 
$\Theta(C^*(G_{\widetilde{\frak L}}^0) )=C(X_{\frak L}) \otimes\C,$
$\Theta(C_0(\widetilde{X}_\Lambda) )=C(X_{\Lambda}) \otimes\C$
and
$\Theta\circ\rho^{\widetilde{\frak L},\tilde{f}}_t
=(\rho^{{\frak L},f}_t \otimes \id) \circ \Theta
$
for a continuous homomorphism
$f:G_{\widetilde{\frak L}}\longrightarrow \Z$ and $t \in \T$,
where
$C_0(\widetilde{X}_\Lambda) $ is regarded as a subalgebra of 
$C^*(G_{\widetilde{\frak L}}^0)$.
\end{proposition}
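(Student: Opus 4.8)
The plan is to reduce the statement to the isomorphism $G_{\widetilde{\frak L}}\cong G_{\frak L}\times G_{\Zp}$ of \'etale groupoids established just above, together with two standard facts: that the reduced $C^*$-algebra of a product of \'etale groupoids is the (minimal) tensor product of the two reduced $C^*$-algebras, and that the $C^*$-algebra of the pair groupoid of a countable set is $\K$.

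First I would fix notation. Write $\Psi:G_{\widetilde{\frak L}}\to G_{\frak L}\times G_{\Zp}$ for the isomorphism $(e_{[p]}x,p-q,n,e_{[q]}z)\mapsto((x,n,z),(p,q))$; since the topology and groupoid operations on $G_{\widetilde{\frak L}}$ are by construction transported from $G_{\frak L}\times G_{\Zp}$ through $\Psi$, the map $g\mapsto g\circ\Psi$ is at once a $*$-isomorphism $C_c(G_{\frak L}\times G_{\Zp})\to C_c(G_{\widetilde{\frak L}})$ which extends to a $C^*$-isomorphism $\Psi^*:C^*(G_{\frak L}\times G_{\Zp})\to C^*(G_{\widetilde{\frak L}})$ carrying $C_0(G_{\frak L}^0\times G_{\Zp}^0)$ onto $C^*(G_{\widetilde{\frak L}}^0)=C_0(G_{\widetilde{\frak L}}^0)$, with $G_{\widetilde{\frak L}}^0\cong X_{\frak L}\times\Zp$.

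Next I would invoke the canonical identification $\delta:C^*(G_{\frak L}\times G_{\Zp})\to C^*(G_{\frak L})\otimes C^*(G_{\Zp})$: the exterior product $(f_1,f_2)\mapsto\big((\gamma_1,\gamma_2)\mapsto f_1(\gamma_1)f_2(\gamma_2)\big)$ embeds $C_c(G_{\frak L})\odot C_c(G_{\Zp})$ densely in $C_c(G_{\frak L}\times G_{\Zp})$ as a $*$-subalgebra, and $\ell^2(G_{\frak L}\times G_{\Zp})=\ell^2(G_{\frak L})\otimes\ell^2(G_{\Zp})$ over $C_0(G_{\frak L}^0)\otimes C_0(G_{\Zp}^0)$, so the left regular representation factors as the exterior tensor product of the two regular representations; under $\delta$ the subalgebra $C_0(G_{\frak L}^0\times G_{\Zp}^0)$ becomes $C_0(G_{\frak L}^0)\otimes C_0(G_{\Zp}^0)$, and nuclearity of $\K$ makes the tensor norm unambiguous. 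Since $G_{\Zp}$ is the pair groupoid of $\Zp$, the assignment $\chi_{\{(m,n)\}}\mapsto e_{m,n}$ of matrix units on $\ell^2(\Zp)$ yields an isomorphism $\kappa:C^*(G_{\Zp})\to\K$ taking $C_0(G_{\Zp}^0)=c_0(\Zp)$ onto the diagonal subalgebra $\C$. Putting $\Theta:=(\id\otimes\kappa)\circ\delta\circ(\Psi^*)^{-1}$ and using $C_0(G_{\frak L}^0)=C(X_{\frak L})$ (as $X_{\frak L}$ is compact), I obtain an isomorphism $\Theta:C^*(G_{\widetilde{\frak L}})\to C^*(G_{\frak L})\otimes\K$ with $\Theta(C^*(G_{\widetilde{\frak L}}^0))=C(X_{\frak L})\otimes\C$.

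Finally I would check the gauge actions. From the definition of $\tilde{f}$ one has $\tilde{f}=f\circ\mathrm{pr}_1\circ\Psi$ with $\mathrm{pr}_1:G_{\frak L}\times G_{\Zp}\to G_{\frak L}$ the projection, so the phase $\exp(2\pi\sqrt{-1}\,t\,\tilde{f}(\gamma))$ depends only on the $G_{\frak L}$-coordinate of $\Psi(\gamma)$; hence under the unitary identification $\ell^2(G_{\widetilde{\frak L}})\cong\ell^2(G_{\frak L})\otimes\ell^2(G_{\Zp})$ the one-parameter group $U_t(\tilde{f})$ corresponds to $U_t(f)\otimes 1$, and therefore $\rho^{\widetilde{\frak L},\tilde{f}}_t=\Ad(U_t(\tilde{f}))$ is carried by $\Theta$ to $\Ad(U_t(f)\otimes 1)=\Ad(U_t(f))\otimes\id=\rho^{{\frak L},f}_t\otimes\id$. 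The one place I would write out carefully --- and the only mildly delicate point --- is that all three identifications are simultaneously compatible with the reduced-norm completions (so that $\Psi^*$, $\delta$ and $\kappa$ really are isometric); this is routine from the explicit Hilbert-module pictures above, and amenability of $G_{\Zp}$ (equivalently, nuclearity of $\K$) removes any reduced-versus-maximal ambiguity.
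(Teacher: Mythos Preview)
Your proof is correct and follows essentially the same route as the paper: both rest on the groupoid isomorphism $G_{\widetilde{\frak L}}\cong G_{\frak L}\times G_{\Zp}$ established just before the proposition. The paper simply writes down the explicit image of the generating characteristic functions, $\chi_{U(e_{[p]}\mu,v_i^l,e_{[q]}\nu)}\mapsto S_\mu E_i^l S_\nu^*\otimes\theta_{p,q}$, and declares the verification straightforward; your argument supplies the conceptual scaffolding (product of \'etale groupoids $\Rightarrow$ tensor product of reduced $C^*$-algebras, pair groupoid of $\Zp$ $\Rightarrow$ $\K$) that makes this correspondence an isomorphism, and your handling of the gauge action via $U_t(\tilde f)\cong U_t(f)\otimes 1$ is exactly what the paper's formula encodes.
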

\begin{proof}
Let
$\{\theta_{p,q}\}_{p,q\in \Zp}$ be the matrix units 
of $\K$.
For $\mu =(\mu_1,\dots,\mu_k), \nu=(\nu_1,\dots,\nu_m)\in B_*(X_\Lambda)$
and
$v_i^l\in V_l$
with $k,m\le l$,
let 
$U(e_{[p]}\mu,v_i^l,e_{[q]}\nu)$
be the clopen set of $G_{\widetilde{\frak L}}$ defined by  
\begin{equation*}
U(e_{[p]}\mu,v_i^l, e_{[q]}\nu)
=\{  (e_{[p]}x, p-q, n, e_{[q]}z) \in G_{\widetilde{\frak L}}
\mid
(x,n,z) \in U(\mu, v_i^l, \nu) 
\}.
\end{equation*}
It is straightforward to see that
the correspondence
\begin{equation*}
\chi_{U(e_{[p]}\mu, v_i^l,e_{[q]}\nu)} \in C^*(G_{\widetilde{\frak L}})
\longrightarrow
S_\mu E_i^l S_\nu^* \otimes \theta_{p,q}
\in C^*(G_{\frak L} )\otimes \K
\end{equation*}
gives rise to an isomorphism 
$\Theta:C^*(G_{\widetilde{\frak L}}) \longrightarrow
C^*(G_{\frak L} )\otimes \K$
of $C^*$-algebras
satisfying the desired properties.
\end{proof}

Let us note that for a left-resolving $\lambda$-graph system
$\frak{L}$  satisfying condition (I), 
Lemma \ref{lem:masa} tells us that 
the subalgebra 
$(\mathcal{D}_\Lambda\otimes \C)^\prime \cap (\OFL\otimes\K)$
of elements of $\OFL\otimes\K$ 
commuting with all elements of 
$\mathcal{D}_\Lambda\otimes \C$
coincides with $\mathcal{D}_{\frak L}\otimes \C$, that is,
\begin{equation*}
(\mathcal{D}_\Lambda\otimes\C)^\prime 
\cap (\OFL\otimes\K) = \mathcal{D}_{\frak L}\otimes\C. 
\end{equation*}  
Hence if there 
exists an isomorphism
$\widetilde{\Phi}:{\mathcal{O}}_{{\frak L}_1}\otimes\K
\longrightarrow 
{\mathcal{O}}_{{\frak L}_2}\otimes\K
$ of $C^*$-algebras
such that 
$\widetilde{\Phi}({\mathcal{D}}_{{\Lambda}_1}\otimes\C)
={\mathcal{D}}_{{\Lambda}_2}\otimes\C,
$
then  
$\widetilde{\Phi}({\mathcal{D}}_{{\frak L}_1}\otimes\C)
={\mathcal{D}}_{{\frak L}_2}\otimes\C
$
holds.
By using Proposition \ref{prop:6.1},
the following proposition
can be proved in a similar way
as equivalence 
between
(ii) $\Longleftrightarrow $ (iii)
in Theorem \ref{thm:eventconj}.
Hence we omit its proof.
\begin{proposition}\label{prop:6.2}
Let ${\frak L}_1$ and 
 ${\frak L}_2$ be left-resolving $\lambda$-graph systems satisfying condition (I).
 Then the following are equivalent:
 \begin{enumerate}
\renewcommand{\theenumi}{\roman{enumi}}
\renewcommand{\labelenumi}{\textup{(\theenumi)}}
\item
There exist an isomorphism
$\tilde{\varphi}: G_{\widetilde{\frak L}_1}\longrightarrow 
G_{\widetilde{\frak L}_2}
$ of \'etale groupoids 
and a homeomorphism
$\tilde{h} :\widetilde{X}_{\Lambda_1} \longrightarrow \widetilde{X}_{\Lambda_2}
$
such that 
$\tilde{\pi}_{{\frak L}_2} 
\circ \tilde{\varphi}|_{G_{\widetilde{\frak L}_1}^0} = 
\tilde{h}\circ \tilde{\pi}_{{\frak L}_1}
$ 
and 
$c_{\widetilde{\frak L}_2}\circ\tilde{\varphi} =
c_{\widetilde{\frak L}_1}.$
\item
There exists an isomorphism
$\widetilde{\Phi}:{\mathcal{O}}_{{\frak L}_1}\otimes\K
\longrightarrow 
{\mathcal{O}}_{{\frak L}_2}\otimes\K
$ of $C^*$-algebras
such that 
\begin{equation*}
\widetilde{\Phi}({\mathcal{D}}_{{\Lambda}_1}\otimes\C)
={\mathcal{D}}_{{\Lambda}_2}\otimes\C, \qquad
\widetilde{\Phi}\circ (\rho^{{\frak L}_1}_t\otimes\id)
 =(\rho^{{\frak L}_2}_t\otimes\id)\circ \widetilde{\Phi}, \quad t \in \T.
\end{equation*} 
\end{enumerate}
\end{proposition}

\section{Two-sided conjugacy}
Let ${\frak L}$ 
be a left-resolving $\lambda$-graph system over $\Sigma$ satisfying condition (I).
Denote by $(X_{\frak L},\sigma_{\frak L})$
the associated topological dynamical system.
Define the shift space
$\bar{X}_{\frak L}$
of the topological dynamical system
 $(\bar{X}_{\frak L},\bar{\sigma}_{\frak L})$
by setting
\begin{equation}
\bar{X}_{\frak L} =\{
(\alpha_i,u_i) _{i \in \Z} \in \prod_{i\in \Z}(\Sigma\times\Omega_{\frak L})
\mid
(\alpha_{i+k},u_{i+k}) _{i \in \Z}
\in X_{\frak L}
 \text{ for all } k \in \Z\} \label{eq:barXfrakL}
\end{equation}
and
\begin{equation}
\bar{\sigma}_{\frak L}((\alpha_i,u_i) _{i \in \Z})=
 (\alpha_{i+1},u_{i+1})_{i \in \Z}. \label{eq:barsigmafrakL}
\end{equation}
We endow $\bar{X}_{\frak L}$ 
with the relative topology from the product topology
of $\prod_{i\in \Z}(\Sigma\times\Omega_{\frak L})$,
so that $\bar{X}_{\frak L}$ is a compact Hausdorff space.
Hence we have 
a topological dynamical system
$(\bar{X}_{\frak L},\bar{\sigma}_{\frak L})$
with a homeomorphism 
$\bar{\sigma}_{\frak L}$ on the compact Hausdorff space
$\bar{X}_{\frak L}$.
It is a two-sided extension of $(X_{\frak L},\sigma_{\frak L})$.
Let
$(X_\Lambda, \sigma_\Lambda)$ be the right one-sided subshift
presented by $\frak L$.
We then have the shift space
$\bar{X}_\Lambda$
and the homeomorphism
$\bar{\sigma}_\Lambda$ 
of the two-sided subshift
$(\bar{X}_\Lambda, \bar{\sigma}_\Lambda)$ 
in a similar way to
\eqref{eq:barXfrakL} and \eqref{eq:barsigmafrakL}.
The two-sided subshift $(\bar{X}_\Lambda, \bar{\sigma}_\Lambda)$ 
is written as $(\Lambda,\sigma)$ or $\Lambda$ for short.
For
$x =(\alpha_i,u_i)_{i\in \Z}\in \bar{X}_{\frak L}$,
$\alpha =(\alpha_i)_{i\in \Z}\in \Lambda$
and
$k, l \in \Z$ with $k<l$,
we set 
\begin{equation*}
x_{[k,l]} =(\alpha_i,u_i)_{i=k}^l,\quad
\alpha_{[k,l]} =(\alpha_i)_{i=k}^l,\quad
x_{[k,\infty)} =(\alpha_i,u_i)_{i=k}^\infty,\quad
\alpha_{[k,\infty)} =(\alpha_i)_{i=k}^\infty.
\end{equation*}
\begin{definition}\label{def:7.1}
Two topological dynamical systems
$(\bar{X}_{{\frak L}_1}, \bar{\sigma}_{{\frak L}_1})$
and
$(\bar{X}_{{\frak L}_2}, \bar{\sigma}_{{\frak L}_2})$
are said to be {\it right asymptotically topologically conjugate}\/ 
if there exists a topological conjugacy
$\psi:\bar{X}_{{\frak L}_1}\longrightarrow \bar{X}_{{\frak L}_2}$,
that is 
$\psi\circ\bar{\sigma}_{{\frak L}_1}=\bar{\sigma}_{{\frak L}_2}\circ\psi$,
such that 
\begin{enumerate}
\renewcommand{\theenumi}{\roman{enumi}}
\renewcommand{\labelenumi}{\textup{(\theenumi)}}
\item
for $m \in \Z$, there exists $M \in \Z$ such that 
$x_{[M,\infty)} = z_{[M,\infty)} $ implies 
$\psi(x)_{[m,\infty)} = \psi(z)_{[m,\infty)} $
for $x, z \in \bar{X}_{{\frak L}_1}$,
\item
for $n \in \Z$, there exists $N \in \Z$ such that 
$y_{[N,\infty)} = w_{[N,\infty)} $ implies 
$\psi^{-1}(y)_{[n,\infty)} = \psi^{-1}(w)_{[n,\infty)} $
for $y,w \in \bar{X}_{{\frak L}_2}$.
\end{enumerate}
In this case, we call
$\psi:\bar{X}_{{\frak L}_1}\longrightarrow \bar{X}_{{\frak L}_2}$
{\it a right asymptotic conjugacy.}\/
\end{definition}
For two $\lambda$-graph systems
${\frak L}_i, i=1,2$,
let us denote by $(\Lambda_i,\sigma_i)$
the two-sided subshift
$(\bar{X}_{\Lambda_i}, \bar{\sigma}_{\Lambda_i}).$ 
Let us denote by
$B_k(\Lambda_i)$ the set of admissible words $B_k(X_{\Lambda_i})$
of $\Lambda_i$ with length $k$. 

Let
$\bar{\pi}_i:\bar{X}_{{\frak L}_i}\longrightarrow \Lambda_i$
be the canonical factor map defined by
$
\bar{\pi}_i ((\alpha_i,u_i) _{i \in \Z}) = (\alpha_i)_{i\in\Z} \in\Lambda_i
$
for $i=1,2.$
The one-sided factor maps
$\pi_{{\frak L}_i}: X_{{\frak L}_i} \longrightarrow X_{\Lambda_i}, i=1,2$
are simply denoted by 
$\pi_i, i=1,2.$
\begin{definition}
The two subshifts 
$(\Lambda_1, \sigma_1)$ and 
$(\Lambda_2, \sigma_2)$
are said to be $({\frak L}_1,{\frak L}_2)$-{\it conjugate}\/ if
there exists a right asymptotic conjugacy
$\psi_{\frak L}:\bar{X}_{{\frak L}_1}\longrightarrow \bar{X}_{{\frak L}_2}$
and a topological conjugacy
$\psi_\Lambda:\Lambda_1\longrightarrow \Lambda_2$
such that 
the diagram
\begin{equation*}
\begin{CD}
\bar{X}_{{\frak L}_1} @>\psi_{\frak L} 
>> \bar{X}_{{\frak L}_2} \\
@V{\bar{\pi}_1 }VV  @VV{ \bar{\pi}_2}V \\
\Lambda_1 @> 
\psi_{\Lambda}>> 
\Lambda_2.
\end{CD}
\end{equation*}
commutes, that is $\bar{\pi}_2\circ\psi_{\frak L} = \psi_{\Lambda}\circ\bar{\pi}_1$.
\end{definition}
\begin{lemma}
Suppose that
$(\Lambda_1, \sigma_1)$ and 
$(\Lambda_2, \sigma_2)$
are $({\frak L}_1,{\frak L}_2)$-conjugate.
Keep the above notation. 
The topological conjugacies
$\psi_{\frak L}:\bar{X}_{{\frak L}_1}\longrightarrow \bar{X}_{{\frak L}_2}$
and 
$\psi_\Lambda:\Lambda_1\longrightarrow \Lambda_2$
satisfy the following properties:
\begin{enumerate}
\renewcommand{\theenumi}{\roman{enumi}}
\renewcommand{\labelenumi}{\textup{(\theenumi)}}
\item
There exists $M_1 \in \Z$ such that  

(a)
$x_{[m-M _1 ,\infty)} = z_{[m-M_1,\infty)} $ implies 
$\psi_{\frak L}(x)_{[m,\infty)} = \psi_{\frak L}(z)_{[m,\infty)} $
for $x, z \in \bar{X}_{{\frak L}_1}$ and $m \in \Z$,

(b)
$a_{[m-M _1 ,\infty)} = b_{[m-M_1,\infty)} $ implies 
$\psi_{\Lambda}(a)_{[m,\infty)} = \psi_{\Lambda}(b)_{[m,\infty)} $
for $a,b \in \Lambda_1$ and $m \in \Z$.
\item
There exists $N_1 \in \Z$ such that 

(a)
$y_{[n-N_1,\infty)} = w_{[n-N_1,\infty)} $ implies 
$\psi_{\frak L}^{-1}(y)_{[n,\infty)} = \psi_{\frak L}^{-1}(w)_{[n,\infty)} $
for $y,w \in \bar{X}_{{\frak L}_2}$ and  $n \in \Z$, 

(b)
$c_{[n-N _1 ,\infty)} = d_{[n-N_1,\infty)} $ implies 
$\psi_{\Lambda}^{-1}(c)_{[n,\infty)} = \psi_{\Lambda}^{-1}(d)_{[n,\infty)} $
for $c,d \in \Lambda_2$ and $n \in \Z$.
\end{enumerate}
\end{lemma}
\begin{proof}
The assertions concerning 
$\psi_{\frak L},\psi_{\frak L}^{-1}$ follow from the conditions (i) and (ii) of 
Definition \ref{def:7.1} with the condition 
$\psi\circ\bar{\sigma}_{{\frak L}_1}=\bar{\sigma}_{{\frak L}_2}\circ\psi$.
The assertions concerning 
$\psi_{\Lambda}, \psi_{\Lambda}^{-1}$
follow from the fact that 
$\psi_{\Lambda}, \psi_{\Lambda}^{-1}$
are sliding block codes (cf. \cite{LM}).
\end{proof}
Recall that 
$\widetilde{\frak L}$ denote the stabilization of ${\frak L}$.
The following theorem combined with Proposition \ref{prop:6.2}
implies Theorem \ref{thm:1.4}.
\begin{theorem}\label{thm:twosidedconjugacy1}
Let ${\frak L}_1$ and 
 ${\frak L}_2$ be left-resolving $\lambda$-graph systems satisfying condition (I).
Let
$(\Lambda_1,\sigma_1)$ and
$(\Lambda_2,\sigma_2)$
be their two-sided subshifts presented by 
${\frak L}_1$ and ${\frak L}_2$, respectively.
Then the following assertions are equivalent:
 \begin{enumerate}
\renewcommand{\theenumi}{\roman{enumi}}
\renewcommand{\labelenumi}{\textup{(\theenumi)}}
\item
The two subshifts 
$(\Lambda_1, \sigma_1)$ and 
$(\Lambda_2, \sigma_2)$
are $({\frak L}_1,{\frak L}_2)$-conjugate.
\item
There exist an isomorphism
$\tilde{\varphi}: G_{\widetilde{\frak L}_1}\longrightarrow 
G_{\widetilde{\frak L}_2}
$ of \'etale groupoids 
and a homeomorphism
$\tilde{h} :\widetilde{X}_{\Lambda_1} \longrightarrow \widetilde{X}_{\Lambda_2}
$
such that 
$\tilde{\pi}_{{\frak L}_2} 
\circ \tilde{\varphi}|_{G_{\widetilde{\frak L}_1}^0} = 
\tilde{h}\circ \tilde{\pi}_{{\frak L}_1}
$ 
and 
$c_{\widetilde{\frak L}_2}\circ\tilde{\varphi} =
c_{\widetilde{\frak L}_1}.$
\end{enumerate}
\end{theorem}
We will first prove (i) $\Longrightarrow$ (ii).
Suppose that the subshifts 
$(\Lambda_1, \sigma_1)$ and 
$(\Lambda_2, \sigma_2)$
are $({\frak L}_1,{\frak L}_2)$-conjugate.
There exists 
a right asymptotic conjugacy
$\psi_{\frak L}:\bar{X}_{{\frak L}_1}\longrightarrow\bar{X}_{{\frak L}_2}$
and a topological conjugacy
$\psi_\Lambda:\Lambda_1\longrightarrow \Lambda_2$
such that 
 $\pi_2\circ\psi_{\frak L} = \psi_{\Lambda}\circ\pi_1$.
One may find $M \in \Z$ such that 
if $(x_n)_{n \in \N} = (x'_n)_{n\in \N}$,  
then 
$\psi_{\frak L}(x)_{[M,\infty)} = \psi_{\frak L}(x')_{[M,\infty)}$
for
$x =(x_n)_{n\in \Z}, x' =(x'_n)_{n\in \Z} \in \bar{X}_{{\frak L}_1}.$
By taking 
$\bar{\sigma}_{{\frak L}_2}^M\circ\psi_{\frak L}$ 
instead of $\psi_{\frak L}$,
we may assume that $M=0$.
We then find $l \in \N$ such that 
if for $(y_n)_{n\in \N} =(y'_n)_{n\in \N}\in X_{{\frak L}_2}$,
then
$\psi_{\frak L}^{-1}(y)_{[l,\infty)} =\psi_{\frak L}^{-1}(y')_{[l,\infty)}$
for $y =(y_n)_{n\in \Z}, y' =(y'_n)_{n\in \Z}\in \bar{X}_{{\frak L}_2}.$
Hence there exists a continuous surjection
$\psi_0:X_{{\frak L}_1}\longrightarrow X_{{\frak L}_2}$
such that 
$
\psi_0((x_n)_{n\in\N}) = (\psi_{\frak L}(x)_n)_{n\in\N}
$
for $x =(x_n)_{n\in\Z}\in \bar{X}_{{\frak L}_1}$
such that 
$\psi_0\circ\sigma_{{\frak L}_1} = 
\sigma_{{\frak L}_2}\circ\psi_0$.
It has a property such that if
$\psi_0(x) = \psi_0(x')$ for $x,x' \in X_{{\frak L}_1}$,
then $x_{[l,\infty)}= x'_{[l,\infty)}.$
Since 
$\psi_\Lambda:\Lambda_1\longrightarrow \Lambda_2$ is a topological conjugacy,
it is a sliding block code (cf.  \cite{LM}) 
so that we can find $L\ge l$ such that 
\begin{equation*}
a_{[1,L]} =b_{[1,L]} \text{ implies } 
\psi_\Lambda(a)_{[1,l]} =\psi_\Lambda(b)_{[1,l]}
\text{ for } a, b \in \Lambda_1.
\end{equation*}
Let $v_1^L,\dots,v_{m(L)}^L$
be the set of vertices $V_L$.
Let
$B_L(\Lambda_1, v_i^L), i=1,\dots,m(L)$ 
be the set of admissible words of $\Lambda_1$
of length $L$ defined by
\begin{align*}
B_L(\Lambda_1, v_i^L)
=\{(\alpha_i)_{i=1}^L\in B_L(\Lambda_1) 
\mid
\text{ there exists }
(\alpha_n,u_n)_{n\in \N}\in X_{{\frak L}_1},\,  u_L^L =v_i^L \}
\end{align*}
where $u_L =(u_L^n)_{n\in\Zp} \in \Omega_{{\frak L}_1}.$
The set  
$B_L(\Lambda_1, v_i^L)$
consists of admissible words of the subshift $\Lambda_1$
of length $L$ terminating at the vertex $v_i^L$.

We fix a vertex $v_i^L \in V_L$ for a while.
Consider a relation $\sim$ on 
$B_L(\Lambda_1)$ in the following way.
Two words
$\mu, \mu'\in B_L(\Lambda_1, v_i^L)$ 
are written $\mu\sim\mu'$ if 
there exist 
$
x =(\alpha_n,u_n)_{n\in \N}, 
x' =(\alpha'_n,u'_n)_{n\in \N} \in X_{\frak L}   
$
such that 
$$
\mu=(\alpha_n)_{n=1}^L, \,\,
 \mu'=(\alpha'_n)_{n=1}^L,\,\,
u_L^L = {u'}_L^L =v_i^L \,
\text{ 
and } \,
\psi_0(x) = \psi_0(x'),
$$

\noindent
where 
$u_L = (u_L^n)_{n \in \Zp}, u'_L = ({u'}_L^n)_{n \in \Zp}\in \Omega_{{\frak L}_1}.$

In order to prove implication (i) $\Longrightarrow$ (ii),
we first show  Lemma \ref{lem:7.4}, Lemma \ref{lem:7.5} and Lemma \ref{lem:7.6}.  
\begin{lemma}\label{lem:7.4}
$\sim$ is an equivalence relation on $B_L(\Lambda_1,v_i^L)$.
\end{lemma}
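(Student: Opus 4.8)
Reflexivity and symmetry are immediate: for reflexivity, given $\mu\in B_L(\Lambda_1,v_i^L)$ one picks any $x\in X_{{\frak L}_1}$ realizing $\mu$ and ending at $v_i^L$ (such an $x$ exists by the definition of $B_L(\Lambda_1,v_i^L)$) and takes $x'=x$; symmetry is built into the definition of $\sim$. The content is transitivity, and the plan is to replace the infinitary defining condition of $\sim$ by a finite, state-based one.

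First I would pin down the witnesses. Because ${\frak L}_1$ is left-resolving, a point $x$ of the clopen cylinder $U(\mu,v_i^L)$ is determined by $\mu$ together with $\sigma_{{\frak L}_1}^{L}(x)$, the vertices $v(x)_L,\dots,v(x)_0$ being read off backwards from $\sigma_{{\frak L}_1}^{L}(x)$ using $\mu$; hence $\sigma_{{\frak L}_1}^{L}$ restricts to a homeomorphism of $U(\mu,v_i^L)$ onto the clopen set $W(v_i^L)=\{z\in X_{{\frak L}_1}\mid v(z)_0^L=v_i^L\}$ (surjectivity being the local property of $\lambda$-graph systems), a set independent of $\mu$; write $\tau_\mu\colon W(v_i^L)\to U(\mu,v_i^L)$ for the inverse. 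If $\mu\sim\mu'$ with witnesses $(x,x')$, then $\psi_0(x)=\psi_0(x')$ forces $x_{[l,\infty)}=x'_{[l,\infty)}$, so, since $L\ge l$, $\mu$ and $\mu'$ agree in positions $l,\dots,L$ and $\sigma_{{\frak L}_1}^{L}(x)=\sigma_{{\frak L}_1}^{L}(x')=:z\in W(v_i^L)$, whence $x=\tau_\mu(z)$ and $x'=\tau_{\mu'}(z)$; conversely such a $z$ always produces witnesses. Thus $\mu\sim\mu'$ holds if and only if $\mu,\mu'$ agree on positions $l,\dots,L$ and $\psi_0(\tau_\mu(z))=\psi_0(\tau_{\mu'}(z))$ for some $z\in W(v_i^L)$.

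The key step is to show this last condition does not depend on $z$. Assume $\mu,\mu'$ agree on positions $l,\dots,L$; then they share the suffix $\mu_l\cdots\mu_L$, and, ${\frak L}_1$ being left-resolving, $\tau_\mu(z)$ and $\tau_{\mu'}(z)$ agree on all coordinates $\ge l$. Applying $\psi_0$ and using $\psi_0\circ\sigma_{{\frak L}_1}=\sigma_{{\frak L}_2}\circ\psi_0$, the points $\psi_0(\tau_\mu(z))$ and $\psi_0(\tau_{\mu'}(z))$ agree on all coordinates $\ge l$; since ${\frak L}_2$ is also left-resolving, equality of these two points is then equivalent to equality of their label sequences on positions $1,\dots,l-1$. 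But, by the one-sided shadow $\pi_{{\frak L}_2}\circ\psi_0=\psi_\Lambda\circ\pi_{{\frak L}_1}$ of $\pi_{2}\circ\psi_{\frak L}=\psi_\Lambda\circ\pi_{1}$, the label sequence of $\psi_0(\tau_\mu(z))$ on positions $1,\dots,l$ equals $\psi_\Lambda(\pi_{{\frak L}_1}(\tau_\mu(z)))_{[1,l]}$, and as $\pi_{{\frak L}_1}(\tau_\mu(z))$ begins with the length-$L$ word $\mu$ while $L$ was chosen so that $\psi_\Lambda(a)_{[1,l]}$ depends only on $a_{[1,L]}$, this is a word $\nu(\mu)\in B_l(\Lambda_2)$ depending on $\mu$ alone. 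Hence, for $\mu,\mu'$ agreeing on positions $l,\dots,L$, one has $\psi_0(\tau_\mu(z))=\psi_0(\tau_{\mu'}(z))$ for some (equivalently every) $z\in W(v_i^L)$ if and only if $\nu(\mu)_{[1,l-1]}=\nu(\mu')_{[1,l-1]}$.

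Combining the two steps, on $B_L(\Lambda_1,v_i^L)$ the relation $\sim$ is exactly the kernel of $\mu\mapsto\big(\mu_l\cdots\mu_L,\ \nu(\mu)_{[1,l-1]}\big)$, hence an equivalence relation. I expect the real obstacle to be precisely this $z$-independence, i.e. showing the a priori infinitary condition $\psi_0(x)=\psi_0(x')$ reduces to finite combinatorial data; that is where the left-resolving hypotheses on both ${\frak L}_1,{\frak L}_2$ and the choice of $L$ (bounding the memory of $\psi_\Lambda$) are genuinely used, while reflexivity, symmetry, and the surrounding bookkeeping are formal.
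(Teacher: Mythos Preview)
Your proof is correct, and it takes a genuinely different route from the paper. The paper verifies transitivity directly: given witnesses for $\mu\sim\mu'$ and $\mu'\sim\mu''$, it builds new points $\bar{x},\bar{z}\in X_{{\frak L}_1}$ with $\pi_1(\bar{x})_{[1,L]}=\mu$, $\pi_1(\bar{z})_{[1,L]}=\mu''$ and a common tail, and then chases through the intertwining $\pi_2\circ\psi_0=\psi_\Lambda\circ\pi_1$ and left-resolvingness of ${\frak L}_2$ to conclude $\psi_0(\bar{x})=\psi_0(\bar{z})$. You instead reduce $\sim$ to the fiber relation of the explicit map $\mu\mapsto(\mu_{[l,L]},\nu(\mu)_{[1,l-1]})$, the decisive point being the $z$-independence of the condition $\psi_0(\tau_\mu(z))=\psi_0(\tau_{\mu'}(z))$. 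That independence argument is exactly the computation the paper carries out later, in the ``if'' direction of Lemma~\ref{lem:7.5}; so your proof effectively anticipates and absorbs part of Lemma~\ref{lem:7.5}, yielding in one stroke both the equivalence-relation statement and an intrinsic description of the classes. The trade-off is that the paper's hands-on argument avoids setting up the section maps $\tau_\mu$ and the associated surjectivity check (which does need the local property of the $\lambda$-graph system, as you note), while your approach is cleaner and makes the later lemmas almost immediate. Both proofs use the same essential ingredients: left-resolvingness of ${\frak L}_1$ and ${\frak L}_2$, the choice of $L$ bounding the sliding-block window of $\psi_\Lambda$, and the one-sided intertwining relation.
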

\begin{proof}
Suppose that $\mu \sim \mu'$ and $\mu'\sim\mu''$
in $B_L(\Lambda_1,v_i^L).$ 
Take 
$
x =(\alpha_n,u_n)_{n\in \N},\, 
x' =(\alpha'_n,u'_n)_{n\in \N} \in X_{{\frak L}_1}   
$
such that 
$\mu=(\alpha_n)_{n=1}^L,
 \mu'=(\alpha'_n)_{n=1}^L,
$ 
$u_L^L = {u'}_L^L =v_i^L$
and
$\psi_0(x) = \psi_0(x').$
Similarly by the condition 
$\mu'\sim\mu''$,
take
$
z =(\beta_n,w_n)_{n\in \N}, \,
z' =(\beta'_n,w'_n)_{n\in \N}\in X_{{\frak L}_1}   
$
such that 
$\mu'=(\beta_n)_{n=1}^L,\,
 \mu''=(\beta'_n)_{n=1}^L,\,
$ 
$w_L^L = {w'}_L^L =v_i^L$
and
$\psi_0(z) = \psi_0(z').$
Since ${\frak L}_1$ is left-resolving,
the condition 
$\mu' =(\alpha'_n)_{n=1}^L =(\beta_n)_{n=1}^L$
and $L\ge l$ 
implies that 
 there exist unique 
$\bar{x}, \bar{z} \in X_{{\frak L}_1}$ such that 
\begin{equation*}
\pi_1(\bar{x})_{[1,L]} =\mu,\quad
\bar{x}_{[L+1,\infty)} = x'_{[L+1,\infty)}
\quad\text{ and }\quad
\pi_1(\bar{z})_{[1,L]} =\mu'',\quad
\bar{z}_{[L+1,\infty)} = x'_{[L+1,\infty)}.
\end{equation*}
By the conditions 
$\psi_0(x) = \psi_0(x')$
and
$\psi_0(z) = \psi_0(z'),$
we have
$x_{[l,\infty)} =x'_{[l,\infty)}$
and
$z_{[l,\infty)} =z'_{[l,\infty)},$
 respectively.
Hence we have
$u_L = u'_L$ in $\Omega_{{\frak L}_1}$
and
$\mu_{[l,L]} =\mu'_{[l,L]}$,
and similarly
$w_L = w'_L$ in $\Omega_{{\frak L}_1}$
and
$\mu'_{[l,L]} =\mu''_{[l,L]}$,
so that 
\begin{equation}
\mu_{[l,L]} = \mu''_{[l,L]}. \label{eq:6.4.5}
\end{equation}
As $x_{[l,\infty)} =x'_{[l,\infty)}$
and $l\le L$, we see that
$x_{[L+1,\infty)} =x'_{[L+1,\infty)}$, 
so that we have
\begin{equation*}
\bar{z}_{[l,\infty)} 
=\bar{z}_{[l,L]}x'_{[L+1,\infty)} 
=\bar{z}_{[l,L]}x_{[L+1,\infty)}.
\end{equation*}
Since
$\pi_1(\bar{z}_{[l,L]}) =\mu''_{[l,L]},
 \pi_1(x_{[l,L]}) =\mu_{[l,L]}
$
with
\eqref{eq:6.4.5}
and
${{\frak L}_1}$ is left-resolving,
we see that
$\bar{z}_{[l,L]}x_{[L+1,\infty)} =x_{[l,L]}x_{[L+1,\infty)}
= x_{[l,\infty)}$.
Hence we have
\begin{equation*}
\bar{z}_{[l,\infty)} =x_{[l,\infty)}. 
\end{equation*}
As ${{\frak L}_1}$ is left-resolving
and
$\pi_1(\bar{x})_{[1,L]} = \mu=\pi_1(x)_{[1,L]},$
we have
\begin{equation*}
\bar{x} 
=\bar{x}_{[1,L]}x'_{[L+1,\infty)} 
=\bar{x}_{[1,L]}x_{[L+1,\infty)} 
=x_{[1,L]} x_{[L+1,\infty)}
=x. 
\end{equation*}
We also have
\begin{gather*}
\psi_0(\bar{x})_{[l,\infty)} 
=\sigma_{{\frak L}_2}^{l-1}(\psi_0(\bar{x}))
=\psi_0(\sigma_{{\frak L}_1}^{l-1}(\bar{x}))
=\psi_0(\bar{x}_{[l,\infty)}), \\
\psi_0(\bar{z})_{[l,\infty)} 
=\sigma_{{\frak L}_2}^{l-1}(\psi_0(\bar{z}))
=\psi_0(\sigma_{{\frak L}_1}^{l-1}(\bar{z}))
=\psi_0(\bar{z}_{[l,\infty)}).
\end{gather*}
Since
$\bar{x} =x$ and 
$x_{[l,\infty)}=\bar{z}_{[l,\infty)},
$ 
we have
\begin{equation} 
\psi_0(\bar{x})_{[l,\infty)}
=
\psi_0(\bar{z})_{[l,\infty)}. \label{eq:6.4.8}
\end{equation}
And also
\begin{equation*}
\pi_1(\bar{x})_{[1,L]}  =\mu = \pi_1(x)_{[1,L]}, \qquad
\pi_1(\bar{z})_{[1,L]}  =\mu'' = \pi_1(z')_{[1,L]}. 
\end{equation*}
Hence we have
\begin{equation}
\psi_\Lambda(\pi_1(\bar{x}))_{[1,l]} =\psi_\Lambda(\pi_1({x}))_{[1,l]}, \qquad
\psi_\Lambda(\pi_1(\bar{z}))_{[1,l]} =\psi_\Lambda(\pi_1(z'))_{[1,l]}. \label{eq:6.4.11}
\end{equation}
It then follows that
\begin{align*}
\pi_2(\psi_0(\bar{x}))_{[1,l]}
=& \psi_\Lambda(\pi_1(\bar{x}))_{[1,l]}
= \psi_\Lambda(\pi_1({x}))_{[1,l]} \quad (\text{by }\eqref{eq:6.4.11}) \\
=& \pi_2(\psi_0({x}))_{[1,l]} 
= \pi_2(\psi_0(x'))_{[1,l]} \quad (\text{by }\psi_0(x) = \psi_0(x'))\\
=& \psi_\Lambda(\pi_1({x'}))_{[1,l]}  
= \psi_\Lambda(\pi_1({z'}))_{[1,l]} \quad 
(\text{by } \pi_1(x')_{[1,L]} =\mu' =\pi_1(z')_{[1,L]}) \\
=& \psi_\Lambda(\pi_1(\bar{z}))_{[1,l]}  
= \pi_2(\psi_0(\bar{z}))_{[1,l]}
\end{align*}
so that
\begin{equation}
\pi_2(\psi_0(\bar{x}))_{[1,l]}
=
\pi_2(\psi_0(\bar{z}))_{[1,l]}. \label{eq:6.4.13}
\end{equation}
By \eqref{eq:6.4.8}
and \eqref{eq:6.4.13}, 
we reach the equality 
$\psi_0(\bar{x})
=\psi_0(\bar{z})
$ 
because ${\frak L}_1$ is left-resolving.
Since
$\bar{x}_{[1,L]} =\mu,
\bar{z}_{[1,L]} =\mu'',$
we conclude that 
$\mu\sim\mu''$
so that  
$\sim$ is an equivalence relation on $B_L({\Lambda_1}, v_i^l)$.
\end{proof}

\begin{lemma}\label{lem:7.5}
 For $x,x' \in X_{{\frak L}_1}$, we have 
 $\psi_0(x) = \psi_0(x')$ 
if and only if
$x_{[l,\infty)} =x'_{[l,\infty)}$
and
$\pi_1(x)_{[1,L]}\sim \pi_1(x')_{[1,L]}$ in $B_L(\Lambda_1,v_i^L)$
for some  $v_i^L\in V_L.$ 
\end{lemma}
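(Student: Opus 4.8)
The plan is to establish the two implications separately. The forward implication is immediate from the basic property of $\psi_0$ already recorded, while the reverse implication is where the work lies: given the two hypotheses one reconstructs the point $\psi_0(x) \in X_{{\frak L}_2}$ from two pieces of data — its tail $\psi_0(x)_{[l,\infty)}$, which is pinned down by $x_{[l,\infty)} = x'_{[l,\infty)}$ together with the intertwining $\psi_0\circ\sigma_{{\frak L}_1} = \sigma_{{\frak L}_2}\circ\psi_0$, and the label block $\pi_2(\psi_0(x))_{[1,l]}$, which is pinned down by the relation $\pi_1(x)_{[1,L]}\sim\pi_1(x')_{[1,L]}$ together with the fact that $\psi_\Lambda$ is a sliding block code and $L$ was chosen so that the first $l$ symbols of $\psi_\Lambda(a)$ depend only on $a_{[1,L]}$. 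Left-resolvingness of ${\frak L}_2$ then welds these two pieces into a single point, forcing $\psi_0(x)$ and $\psi_0(x')$ to coincide.

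For the ``only if'' direction, suppose $\psi_0(x) = \psi_0(x')$. The property of $\psi_0$ noted just before the lemma gives $x_{[l,\infty)} = x'_{[l,\infty)}$ at once. Since $l\le L$, the $\iota$-orbits of $x$ and $x'$ at coordinate $L$ agree; letting $v_i^L$ be the $L$-th component of this common $\iota$-orbit, both $\pi_1(x)_{[1,L]}$ and $\pi_1(x')_{[1,L]}$ lie in $B_L(\Lambda_1, v_i^L)$, and the pair $(x, x')$ is itself a witness for $\pi_1(x)_{[1,L]}\sim\pi_1(x')_{[1,L]}$ according to the definition of $\sim$.

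For the ``if'' direction, write $\mu = \pi_1(x)_{[1,L]}$ and $\mu' = \pi_1(x')_{[1,L]}$, assume $x_{[l,\infty)} = x'_{[l,\infty)}$ and $\mu\sim\mu'$ in some $B_L(\Lambda_1, v_i^L)$, and fix a witness, i.e.\ $y, y' \in X_{{\frak L}_1}$ with $\pi_1(y)_{[1,L]} = \mu$, $\pi_1(y')_{[1,L]} = \mu'$ and $\psi_0(y) = \psi_0(y')$. First, $\psi_0(x)_{[l,\infty)} = \psi_0(x_{[l,\infty)}) = \psi_0(x'_{[l,\infty)}) = \psi_0(x')_{[l,\infty)}$, using the intertwining $\psi_0\circ\sigma_{{\frak L}_1}=\sigma_{{\frak L}_2}\circ\psi_0$ exactly as in the computations in the proof of Lemma \ref{lem:7.4}. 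Next, by $\pi_2\circ\psi_0 = \psi_\Lambda\circ\pi_1$ and the choice of $L$, the word $\pi_2(\psi_0(x))_{[1,l]} = \psi_\Lambda(\pi_1(x))_{[1,l]}$ depends only on $\mu$, and likewise $\pi_2(\psi_0(y))_{[1,l]}$ depends only on $\mu$, while $\pi_2(\psi_0(y'))_{[1,l]}$ and $\pi_2(\psi_0(x'))_{[1,l]}$ depend only on $\mu'$; chaining these equalities through $\psi_0(y) = \psi_0(y')$ gives $\pi_2(\psi_0(x))_{[1,l]} = \pi_2(\psi_0(x'))_{[1,l]}$.

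It remains to conclude $\psi_0(x) = \psi_0(x')$ from the two equalities just obtained. An element of $X_{{\frak L}_2}$ is determined by its coordinates from $l$ onward together with its first $l$ labels: the $\iota$-orbit at coordinate $l$ is then known, and since ${\frak L}_2$ is left-resolving each preceding $\iota$-orbit is the unique predecessor determined by the label feeding into the next coordinate, so coordinates $1,\dots,l-1$ are forced once the labels on $[1,l]$ are fixed. Applying this to $\psi_0(x)$ and $\psi_0(x')$ yields the equality. The main point to handle carefully is the third paragraph's routing of the $[1,l]$-block agreement through the auxiliary witnesses $y, y'$ — this is precisely where the hypothesis $\mu\sim\mu'$ enters — whereas the remaining steps are routine equivariance bookkeeping and a direct appeal to the left-resolving property, in the spirit of Lemma \ref{lem:7.4}.
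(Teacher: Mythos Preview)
Your argument is correct and matches the paper's own proof essentially step for step: the only-if direction uses $(x,x')$ itself as a witness for $\sim$ after extracting $x_{[l,\infty)}=x'_{[l,\infty)}$, and the if direction chains the $[1,l]$-label equality through auxiliary witnesses (the paper calls them $z,z'$), then invokes the intertwining for the tail and left-resolvingness of ${\frak L}_2$ to conclude. There is no substantive difference in approach.
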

\begin{proof}
Let $x =(\alpha_n,u_n)_{n\in\N}, \, x' =(\alpha'_n,u'_n)_{n\in\N}\in X_{{\frak L}_1}.$
We will first show  the only if part.
Suppose that
$\psi_0(x) = \psi_0(x')$.
Hence we have
$x_{[l,\infty)} =x'_{[l,\infty)}.$
As $L\ge l$, we have
$u_L = u'_L$ and
particularly
we may put
$v_i^L=u_L^L = u_L^{\prime L} \in V_L$,
where
$u_L = (u_L^n)_{n \in \Zp},u'_L = (u_L^{\prime n})_{n \in \Zp}.$
Hence
$\pi_1(x)_{[1,L]},
 \pi_1(x')_{[1,L]}\in B_L(\Lambda_1,v_i^L).$
Since
$\psi_0(x) = \psi_0(x')$,
we see that 
$\pi_1(x)_{[1,L]}\sim \pi_1(x')_{[1,L]}$ in $B_L(\Lambda_1,v_i^l).$

We will next show the if part as follows.
Suppose that 
$\pi_1(x)_{[1,L]}\sim \pi_1(x')_{[1,L]}$ 
and 
$x_{[l,\infty)} =x'_{[l,\infty)}.$
One may find $z, z' \in X_{{\frak L}_1}$ 
such that 
$\pi_1(z)_{[1,L]} =\pi_1(x)_{[1,L]}, \, \pi_1(z')_{[1,L]} =\pi_1(x')_{[1,L]}$
and
$\psi_0(z) = \psi_0(z')$.
Hence we have
$z_{[l,\infty)} =z'_{[l,\infty)},$
and
\begin{align*}
\psi_0(x)_{[l,\infty)} 
=& \sigma_{{\frak L}_2}^{l-1}(\psi_0(x))
=\psi_0(\sigma_{{\frak L}_1}^{l-1}(x))
=\psi_0(\sigma_{{\frak L}_1}^{l-1}(x'))
=\sigma_{{\frak L}_2}^{l-1}(\psi_0(x'))
=\psi_0(x')_{[l,\infty)}, \\
\pi_2(\psi_0(x))_{[1,l]} 
=&\psi_\Lambda(\pi_1(x))_{[1,l]} 
=\psi_\Lambda(\pi_1(z))_{[1,l]} 
=\pi_2(\psi_0(z))_{[1,l]} \\
=&\pi_2(\psi_0(z'))_{[1,l]}
=\psi_\Lambda(\pi_1(z'))_{[1,l]}
=\psi_\Lambda(\pi_1(x'))_{[1,l]}
=\pi_2(\psi_0(x'))_{[1,l]}. 
\end{align*}
Since ${{\frak L}_2}$ is left-resolving, 
we have
$\psi_0(x) = \psi_0(x')$.
\end{proof}


Consider the set of equivalence classes $B_L(\Lambda_1, v_i^L)/\sim.$
For $\mu =(\mu_1,\dots,\mu_L)\in B_L(\Lambda_1,v_i^L)$,
denote by
$[\mu]_{v_i^L} $ 
the equivalence class of $\mu$
$$
[\mu]_{v_i^L} = \{ \nu \in B_L(\Lambda_1,v_i^L)\mid
\nu\sim\mu\}.
$$
Under fixing 
 the class $[\mu]_{v_i^L}$, 
 we decompose the set 
$\Z_+$ of nonnegative integers into disjoint subsets 
$\Z_+(\nu;v_i^L), \nu \in [\mu]_{v_i^L}$
such that
$\Z_+ =\bigcup_{\nu \in [\mu]_{v_i^L}}\Z_+(\nu;v_i^L)$
and
there exists a bijection
\begin{equation*}
g_{(\nu,v_i^L)}:\Z_+\longrightarrow \Z_+(\nu,v_i^L)
\quad
\text{ for each }
\quad \nu \in [\mu]_{v_i^l}.
\end{equation*}
For $x = (\alpha_n,u_n)_{n\in \N}\in X_{{\frak L}_1}$,
we write
\begin{equation*}
\lambda_n(x) = \alpha_n\in \Sigma,
\qquad
v_n(x) = u_n = (u_n^l)_{l\in \Zp} \in \Omega_{\frak L}
\quad
\text{ and }
\quad
v_n^l(x) = u_n^l \in V_l.
\end{equation*}
Hence
$v_L^L(x)=u_L^L \in V_L$, and
we know that
$\pi_1(x)_{[1,L]} \in B_L(\Lambda_1,v_L^L(x)).$
Let us denote by
$g_{(x,L)}$
the map
$
g_{(\pi_1(x)_{[1,L]}, v_L^L(x))}:\Zp \longrightarrow \Z_+(\pi_1(x)_{[1,L]},v_L^L(x)).
$
\begin{lemma}\label{lem:7.6}
The  map:
\begin{equation*}
\xi: (x,n) \in X_{{\frak L}_1} \times\Zp \longrightarrow 
(\psi_0(x), g_{(x,L)}(n)) \in X_{{\frak L}_2} \times\Zp
\end{equation*}
is a homeomorphism between 
$ X_{{\frak L}_1} \times\Zp$ and $ X_{{\frak L}_2} \times\Zp$.
\end{lemma}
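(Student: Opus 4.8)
The plan is to check the four things that make $\xi$ a homeomorphism: it is well defined, it is a bijection, it is continuous, and its inverse is continuous. Lemma \ref{lem:7.5} does the heavy lifting for bijectivity, and a properness argument will handle the inverse.

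First, well-definedness and continuity of $\xi$. The auxiliary map $g_{(x,L)}$ depends on $x$ only through the pair $(\pi_1(x)_{[1,L]},v_L^L(x))$, which takes values in the \emph{finite} set $B_L(\Lambda_1)\times V_L$ and is a locally constant function of $x\in X_{{\frak L}_1}$. Writing $W_{\mu,v_i^L}=\{x\in X_{{\frak L}_1}\mid \pi_1(x)_{[1,L]}=\mu,\ v_L^L(x)=v_i^L\}$, these are finitely many clopen cylinders covering $X_{{\frak L}_1}$, and on $W_{\mu,v_i^L}\times\{n\}$ the map $\xi$ is $x\mapsto(\psi_0(x),g_{(\mu,v_i^L)}(n))$, which is continuous since $\psi_0$ is. Hence $\xi$ is continuous.

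Next, bijectivity. For injectivity: if $\xi(x,n)=\xi(x',n')$ then $\psi_0(x)=\psi_0(x')$, so Lemma \ref{lem:7.5} gives $x_{[l,\infty)}=x'_{[l,\infty)}$ (whence $v_L^L(x)=v_L^L(x')=:v_i^L$, as $L\ge l$) and $\mu:=\pi_1(x)_{[1,L]}\sim\mu':=\pi_1(x')_{[1,L]}$ in $B_L(\Lambda_1,v_i^L)$; the equality of second coordinates reads $g_{(\mu,v_i^L)}(n)=g_{(\mu',v_i^L)}(n')$, and since the sets $\Zp(\nu;v_i^L)$ with $\nu\in[\mu]_{v_i^L}$ are pairwise disjoint this forces $\mu=\mu'$, then $n=n'$ because $g_{(\mu,v_i^L)}$ is a bijection, and finally $x=x'$ because ${\frak L}_1$ is left-resolving and $x,x'$ share the label word $\mu$ on $[1,L]$ and agree on $[L,\infty)$. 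For surjectivity: given $(y,m)$, choose $x$ with $\psi_0(x)=y$ (possible since $\psi_0$ is onto), set $v_i^L=v_L^L(x)$ and $\mu=\pi_1(x)_{[1,L]}$; since $\Zp$ is the disjoint union of the $\Zp(\nu;v_i^L)$ over $\nu\in[\mu]_{v_i^L}$, there is a unique $\nu_*\in[\mu]_{v_i^L}$ with $m\in\Zp(\nu_*;v_i^L)$. One then constructs $x'\in X_{{\frak L}_1}$ with $\psi_0(x')=y$ and $\pi_1(x')_{[1,L]}=\nu_*$; granting this, $g_{(x',L)}=g_{(\nu_*,v_i^L)}$ and $\xi(x',g_{(\nu_*,v_i^L)}^{-1}(m))=(y,m)$. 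I expect this last construction to be the main obstacle: since the defining witnesses of $\nu_*\sim\mu$ agree on $[l,\infty)$ one sees that representatives of a single $\sim$-class share the label block on $[l,L]$, and then $x'$ should be obtained by back-extending the tail $x_{[l,\infty)}$ of $x$ along the new labels $\nu_{*,1},\dots,\nu_{*,l-1}$; showing that this back-extension exists, i.e.\ that the $\pi_1$-image of the $\psi_0$-fibre over $y$ is \emph{all} of $[\mu]_{v_i^L}$, is the delicate point, to be extracted from the definition of $\sim$ together with the left-resolving and local properties of ${\frak L}_1$.

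Finally, continuity of $\xi^{-1}$. Rather than writing $\xi^{-1}$ out, I would show $\xi$ is a proper map and invoke that a continuous proper bijection between locally compact Hausdorff spaces is closed, hence a homeomorphism. A compact subset of $X_{{\frak L}_2}\times\Zp$ lies in $X_{{\frak L}_2}\times F$ for some finite $F\subset\Zp$, and $\xi^{-1}(X_{{\frak L}_2}\times F)=\{(x,n)\mid g_{(x,L)}(n)\in F\}$; since $g_{(x,L)}$ is one of the finitely many bijections $g_{(\mu,v_i^L)}$ and each $g_{(\mu,v_i^L)}^{-1}(F)$ is finite, this set is contained in $X_{{\frak L}_1}\times F'$ for a finite $F'\subset\Zp$, and it is closed by continuity of $\xi$, hence compact. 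Thus $\xi$ is proper, and the proof is complete.
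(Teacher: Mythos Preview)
Your overall strategy matches the paper's: injectivity via Lemma~\ref{lem:7.5} and the disjointness of the sets $\Zp(\nu;v_i^L)$, surjectivity by constructing a preimage with a prescribed initial label word, and then continuity in both directions. Two remarks are worth making.

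For surjectivity you correctly identify the obstacle but undersell the tool you already have. Once you build $x'$ with $\pi_1(x')_{[1,L]}=\nu_*$ and $x'_{[l,\infty)}=x_{[l,\infty)}$ (which, as you note, is possible because $\nu_*$ and $\mu$ share the label block on $[l,L]$ and ${\frak L}_1$ is left-resolving), the equality $\psi_0(x')=\psi_0(x)=y$ follows \emph{immediately} from the ``if'' direction of Lemma~\ref{lem:7.5}; you do not need to re-extract anything from the definition of $\sim$. The paper instead re-proves this implication in situ, repeating the chain of equalities involving $\psi_\Lambda$ and $\pi_2\circ\psi_0=\psi_\Lambda\circ\pi_1$; invoking Lemma~\ref{lem:7.5} is cleaner. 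Your phrase ``left-resolving and local properties of ${\frak L}_1$'' is slightly off the mark: what is actually needed (and what is hidden inside Lemma~\ref{lem:7.5}) is the sliding block code $\psi_\Lambda$ together with left-resolving of ${\frak L}_2$.

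Your properness argument for continuity of $\xi^{-1}$ is a genuine and more economical alternative to the paper's route. The paper shows that $\psi_0$ is a local homeomorphism (coming from the two-sided homeomorphism $\psi_{\frak L}$) and then checks directly that $\xi$ carries basic clopen sets $U(\mu,v_i^L)\times\{n\}$ to open sets. Your argument bypasses the local-homeomorphism verification entirely: finiteness of the index set $(\mu,v_i^L)$ bounds the second coordinate of $\xi^{-1}$ on each slice, so $\xi$ is proper, and a proper continuous bijection between locally compact Hausdorff spaces is a homeomorphism. Both work; yours is shorter.
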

\begin{proof}
1. Injectivity of $\xi$:

Suppose that 
$\xi(x,n) = \xi(x',n')$,
so that 
$\psi_0(x) =\psi_0(x')$ and
$g_{(x,L)}(n) =g_{(x',L)}(n').$
As 
$\psi_0(x) =\psi_0(x')$, we have
$v_L^L(x) = v_L^L(x')$ and put the vertex as 
$v_i^L$. 
By the preceding lemma, we know that 
$\pi_1(x)_{[1,L]} \sim \pi_1(x')_{[1,L]}$
in $B_L(\Lambda_1,v_i^L)$.
If $\pi_1(x)_{[1,L]} \ne \pi_1(x')_{[1,L]},$
we see that
$g_{(x,L)}(\Z_+) \cap g_{(x',L)}(\Z_+') =\emptyset,$
so that 
$g_{(x,L)}(n) \ne g_{(x',L)}(n').$
Hence we get
$\pi_1(x)_{[1,L]} = \pi_1(x')_{[1,L]}.$
As 
$g_{(x,L)}:\Z_+\longrightarrow \Z_+(\pi_1(x)_{[1,L]},v_i^L)$
is bijective,
we obtain that $n=n'$
because $g_{(x,L)}(n) =g_{(x',L)}(n').$
And also by the preceding lemma 
the condition
$\psi_0(x) =\psi_0(x')$
implies that
$x_{[l,\infty)} =x'_{[l,\infty)}$.
Since $l\le L$, we have
$\pi_1(x)_{[1,l]} = \pi_1(x')_{[1,l]},$
so that we have $x =x'$.
This shows that 
$\xi:X_{{\frak L}_1} \times\Z_+\longrightarrow 
X_{{\frak L}_2} \times\Z_+
$
is injective.
 
\medskip

2. Surjectivity of $\xi$:

Concerning surjectivity of $\xi$, take an arbitrary element
$(y,m) \in X_{{\frak L}_2}\times\Zp$.
Since $\psi_0:X_{{\frak L}_1}\longrightarrow X_{{\frak L}_2}$ 
is surjective,
one may find $x\in X_{{\frak L}_1}$ such that 
$\psi_0(x) =y$.
For $\pi_1(x)_{[1,L]}\in B_L(\Lambda_1,v_L^L(x))$,
the set 
$\Zp$ is decomposed into disjoint union
$$
\Zp = \bigcup_{\nu \in [{\pi_1(x)}_{[1,L]}]_{v_L^L(x)}}\Zp(\nu,v_L^L(x)). 
$$
One may find $\nu\in [{\pi_1(x)}_{[1,L]}]_{v_L^L(x)}$ 
such that 
$m \in  \Zp(\nu,v_L^L(x))$.
There exist
$z,z' \in X_{{\frak L}_1}$ such that 
\begin{gather}
\psi_0(z) = \psi_0(z'), \quad
\pi_1(z)_{[1,L]} =\nu, \quad
\pi_1(z')_{[1,L]} =\pi_1(x)_{[1,L]}, \label{eq:6.6.2}\\
v_L^L(z) = v_L^L(z') = v_L^L(x). \label{eq:6.6.3}
\end{gather}
As $\pi_1(z)_{[1,L]} \in B_L(\Lambda_1,v_L^L(x))$,
we may find $\bar{x} \in X_{{\frak L}_1}$
such that 
\begin{gather}
\pi_1(\bar{x})_{[1,L]} =\pi_1(z)_{[1,L]} (=\nu), \label{eq:6.6.4}\\
\bar{x}_{[L+1,\infty)} = x_{[L+1,\infty)}. \label{eq:6.6.5}
\end{gather}
It then follows that
\begin{equation}
\psi_0(\bar{x})_{[l,\infty)}
=\sigma_{{\frak L}_2}^{l-1}(\psi_0(\bar{x})) 
=\psi_0(\sigma_{{\frak L}_1}^{l-1}(\bar{x})) 
=\psi_0(x_{[l,\infty)}).\label{eq:6.6.6}
\end{equation}
Now
the condition $\psi_0(z) =\psi_0(z')$
implies 
$z_{[l,\infty)}= z'_{[l,\infty)}$
so that  
$\pi_1(z)_{[l,\infty)}= \pi_1(z')_{[l,\infty)}.$
Hence by \eqref{eq:6.6.4} and \eqref{eq:6.6.5}, we have 
\begin{equation}
\pi_1(\bar{x})_{[l,L]} 
=\pi_1(z)_{[l,L]}
=\pi_1(z')_{[l,L]}
=\pi_1(x)_{[l,L]}. \label{eq:6.6.8}
\end{equation}
As ${\frak L}_1$ is left-resolving,
the equality \eqref{eq:6.6.8}
implies that
$\bar{x}_{[l,L]} {x}_{[L+1,\infty)} 
={x}_{[l,L]} {x}_{[L+1,\infty)} 
$
so that 
\begin{equation}
\bar{x}_{[l,\infty)} 
=\bar{x}_{[l,L]} 
\bar{x}_{[L+1,\infty)} 
=\bar{x}_{[l,L]} 
 {x}_{[L+1,\infty)} 
={x}_{[l,\infty)}. \label{eq:6.6.9} 
\end{equation}
By \eqref{eq:6.6.6} and \eqref{eq:6.6.9},
we have
\begin{equation}
\psi_0(\bar{x})_{[l,\infty)}
=\psi_0({x}_{[l,\infty)})
=\psi_0(\sigma_{{\frak L}_1}^{l-1}({x})) 
=\sigma_{{\frak L}_2}^{l-1}(\psi_0({x})) 
=\psi_0({x})_{[l,\infty)}.\label{eq:6.6.10}
\end{equation}
By \eqref{eq:6.6.2} and \eqref{eq:6.6.4}
we have
$\psi_\Lambda(\pi_1(z'))_{[1,l]} = \psi_\Lambda(\pi_1(x))_{[1,l]}$
and
$\psi_\Lambda(\pi_1(\bar{x}))_{[1,l]} = \psi_\Lambda(\pi_1(z))_{[1,l]}$,
respectively,
so that 
\begin{align*}
\pi_2(\psi_0(\bar{x}))_{[1,l]}
=& \psi_\Lambda(\pi_1(\bar{x}))_{[1,l]}
= \psi_\Lambda(\pi_1({z}))_{[1,l]}  \\
=& \pi_2(\psi_0({z}))_{[1,l]} 
= \pi_2(\psi_0(z'))_{[1,l]} \quad (\text{by }\psi_{\frak L}(z) = \psi_{\frak L}(z'))\\
=& \psi_\Lambda(\pi_1({z'}))_{[1,l]}  
= \psi_\Lambda(\pi_1({x}))_{[1,l]}  \\
=& \pi_2(\psi_0({x}))_{[1,l]}
\end{align*}
so that
\begin{equation}
\pi_2(\psi_0(\bar{x}))_{[1,l]}
=
\pi_2(\psi_0(x))_{[1,l]}. \label{eq:6.6.11}
\end{equation}
Since ${\frak L}_2$ is left-resolving,
the equalities 
 \eqref{eq:6.6.10} and \eqref{eq:6.6.11} ensure that 
\begin{equation*}
\psi_0(\bar{x})= \psi_0(x). 
\end{equation*}
 Therefore we get
 $\bar{x} \in X_{{\frak L}_1}$ such that 
\begin{equation*}
\psi_0(\bar{x}) = y,\qquad 
\pi_1(\bar{x})_{[1,L]} =\nu, \qquad
\pi_1(\bar{x})_{[1,L]} \in B_L(\Lambda_1,v_L^L(x)),
\end{equation*}
where
$\nu$ satisfies $m \in \Zp(\nu,v_L^L(x))$.
Hence we have
$m \in \Zp(\pi_1(\bar{x})_{[1,L]},v_L^L(\bar{x})).$
Since
$g_{(\bar{x}, L)}:\Zp\longrightarrow \Zp(\pi_1(\bar{x})_{[1,L]},v_L^L(\bar{x}))$
is bijective,
one may find $n \in \Zp$ such that 
$g_{(\bar{x},L)}(n) = m$.
We thus have
\begin{equation*}
(y,m) = (\psi_0(\bar{x}), g_{(\bar{x},L)}(n) ) = \xi(\bar{x}, n)
\end{equation*}
and hence 
$\xi$ is surjective.

\medskip

3. Continuity of $\xi$:

Suppose that $(x,n)$ and $(x',n')$ are in a small open neighborhood in $X_{{\frak L}_1}\times\Zp$.
We see that 
$\pi_1(x)_{[1,L]}=\pi_1(x')_{[1,L]}$ and
$v_L^L(x) =v_L^L(x')$ and $n=n'$.
Hence we have
$g_{(x,L)} =g_{(x',L)}$ and particularly
$g_{(x,L)}(n) =g_{(x',L)}(n').$
Since $x$ is close to $x'$, 
by the continuity of $\psi_0$,
$\psi_0(x)$ is close to $\psi_0(x').$
Hence we know that 
$\xi:X_{{\frak L}_1} \times\Z_+\longrightarrow 
X_{{\frak L}_2} \times\Z_+
$
is continuous.


Since $\psi_{\frak L}:\bar{X}_{{\frak L}_1}\longrightarrow \bar{X}_{{\frak L}_2}$
is a homeomorphism,
the continuous map
$\psi_0:{X}_{{\frak L}_1}\longrightarrow {X}_{{\frak L}_2}$
is a local homeomorphism by its construction.
For any $(x,n) \in X_{{\frak L}_1}\times\Zp$,
one may take an open neighborhood
$U(\pi_1(x)_{[1,L]}, v_L^L(x))$
of $x$ so that
$g_{(\pi_1(z)_{[1,L]}, v_L^L(z))} =g_{(\pi_1(x)_{[1,L]}, v_L^L(x))}$
for $z \in U(\pi_1(x)_{[1,L]}, v_L^L(x)).$
Hence we know that 
$$
\xi(U(\pi_1(x)_{[1,L]}, v_L^L(x)), n)
=(\psi_0(U(\pi_1(x)_{[1,L]}, v_L^L(x))), n')
$$
where
$n' =g_{(\pi_1(x)_{[1,L]}, v_L^L(x))}(n).$
Since
$\psi_0$ is a local homeomorphism,
$\psi_0(U(\pi_1(x)_{[1,L]}, v_L^L(x)))$ is open in $X_{{\frak L}_2}$
so that 
$
\xi(U(\pi_1(x)_{[1,L]}, v_L^L(x)), n)
$
is open in $X_{{\frak L}_2} \times \Zp$.
This shows that 
$\xi$ maps an open set of
$X_{{\frak L}_1} \times \Zp$
to an open set of
$X_{{\frak L}_2} \times \Zp$,
so that 
it is a homeomorphism.
\end{proof}
We will pass to the proof of  implication (i) $\Longrightarrow$ (ii)
of Theorem \ref{thm:twosidedconjugacy1}.
Basic strategy of the proof below as well as the proof of implication 
(ii) $\Longrightarrow$ (i) comes from the proof of Theorem 5.1 in 
Carlsen--Rout's paper \cite{CR}.
  
Recall that $\widetilde{\frak L}_i$ denote
the stabilizations of ${\frak L}_i$.
We write the groupoids
$$
G_{\widetilde{\frak L}_i} = G_{{\frak L}_i} \times G_{\Zp}
=\{((x,p), n,(z,q)) \mid (x, n+q-p, z) \in G_{{\frak L}_i}, (p,q) \in G_{\Zp} \},
\quad i=1,2
$$
where $G_{\Zp} = \Zp\times\Zp.$
Although the set of the  right hand side above looks different from
\eqref{eq:GLtilde}, it is actually isomorphic to the groupoid
$G_{\widetilde{\frak L}_i}$ defined by \eqref{eq:GLtilde},
because the correspondence 
\begin{equation*}
(e_{[p]}x, p-q, m, e_{[q]}z ) 
\longrightarrow 
\{((x,p), m+p-q, (z,q)) \mid (x,m,z) \in G_{{\frak L}_i}, (p,q) \in G_{\Zp} \}
\end{equation*}
from $G_{\widetilde{\frak L}_i}$
to the  right hand side above
gives rise to an isomorphism of \'etale groupoids.

Consider the following map
\begin{equation*}
\tilde{\varphi}:
 ((x,p), n,(z,q))\in G_{\widetilde{\frak L}_1}
\longrightarrow 
 (\xi(x,p),
n+q +g_{(x,L)}(p) -p-g_{(z,L)}(q),
\xi(z,q)) \in G_{\widetilde{\frak L}_2},
\end{equation*}
where $\xi: X_{{\frak L}_1} \times\Zp \longrightarrow X_{{\frak L}_2} \times\Zp$
is the homeomorphism defined in Lemma \ref{lem:7.6}.
We will see that the map above is a well-defined isomorphism of \'etale groupoids.
We denote $(x,p)$
by
$e_{[p]}x. $  
Let 
$((x,p), n,(z,q))\in G_{\widetilde{\frak L}_1}$,
which means that there exist
$k,l\in \Zp$ such that 
$k-l =n,$ and
$ 
\sigma_{\widetilde{\frak L}_1}^k(e_{[p]}x) 
=\sigma_{\widetilde{\frak L}_1}^l(e_{[q]}z).
$
We may assume that $k\ge p, \, l\geq q$.
Since 
$ 
\sigma_{\widetilde{\frak L}_1}^k(e_{[p]}x) 
=\sigma_{{\frak L}_1}^{k-p}(x)
$
and
$ 
\sigma_{\widetilde{\frak L}_1}^l(e_{[q]}z) 
=\sigma_{{\frak L}_1}^{l-q}(z)$,
the condition
$((x,p), n,(z,q))\in G_{\widetilde{\frak L}_1}$
is equivalent to the condition that 
there exist
$m(=k-p),m'(=l-q) \in \Zp$ 
such that 
$m+p -m'-q =n$ 
and
$\sigma_{{\frak L}_1}^{m}(x)
=\sigma_{{\frak L}_1}^{m'}(z)$.
As 
$\xi(x,p) = (\psi_0(x), g_{(x,L)}(p))$ 
and
$\xi(z,q) =(\psi_0(z), g_{(z,L)}(q)),$
we have
\begin{align}
\sigma_{\widetilde{\frak L}_2}^{g_{(x,L)}(p)+m}(\xi(x,p))
= & \sigma_{{\frak L}_2}^{m}(\psi_0(x))
= \psi_0(\sigma_{{\frak L}_1}^{m}(x)), \label{eq:sigma2xpsi0}\\
\sigma_{\widetilde{\frak L}_2}^{g_{(z,L)}(q)+m'}(\xi(z,q))
= &\sigma_{{\frak L}_2}^{m'}(\psi_0(z))
= \psi_0(\sigma_{{\frak L}_1}^{m'}(z)). \label{eq:sigma2zpsi0}
\end{align}
Since
$\sigma_{{\frak L}_1}^{m}(x)
=\sigma_{{\frak L}_1}^{m'}(z)$,
we have
$\sigma_{\widetilde{\frak L}_2}^{g_{(x,L)}(p)+m}(\xi(x,p))
=\sigma_{\widetilde{\frak L}_2}^{g_{(z,L)}(q)+m'}(\xi(z,q))
$
so that 
the element
\begin{equation*}
(\xi(x,p), g_{(x,L)}(p) +m - g_{(z,L)}(q) -m', \xi(z,q))
\end{equation*}
belongs to $G_{\widetilde{\frak L}_2}.$
Therefore  
the map
$\tilde{\varphi}:G_{\widetilde{\frak L}_1}
\longrightarrow  G_{\widetilde{\frak L}_2}
$
is well-defined.

The inverse of 
$\tilde{\varphi}:G_{\widetilde{\frak L}_1}
\longrightarrow  G_{\widetilde{\frak L}_2}
$
is given by the following way.
For $(y,N,w) \in G_{\widetilde{\frak L}_2}$,
one may find
$(x,p), (z,q) \in X_{{\frak L}_1}\times\Zp$
such that 
$y =\xi(x,p), w = \xi(z,q)$ in a unique way
because 
$\xi:X_{{\frak L}_1}\times\Zp \longrightarrow
X_{{\frak L}_2}\times\Zp
$ 
is a homeomorphism. 
Take $K,M\in \Zp$ 
such that
$N = K-M$ and
$$
\sigma_{\widetilde{\frak L}_2}^K(\xi(x,p))
=
\sigma_{\widetilde{\frak L}_2}^M(\xi(z,q)).
$$
We may assume that 
$K \ge g_{(x,L)}(p),\, M \ge g_{(z,L)}(q)$.
Since
\begin{align*}
\sigma_{\widetilde{\frak L}_2}^{K}(\xi(x,p))
= & \sigma_{{\frak L}_2}^{K-g_{(x,L)}(p)}(\psi_0(x))
= \psi_0(\sigma_{{\frak L}_1}^{K-g_{(x,L)}(p)}(x)), \\
\sigma_{\widetilde{\frak L}_2}^{M}(\xi(z,q))
= &\sigma_{{\frak L}_2}^{M-g_{(z,L)}(q)}(\psi_0(z))
= \psi_0(\sigma_{{\frak L}_1}^{M-g_{(z,L)}(q)}(z)),
\end{align*}
we have
\begin{equation*}
\sigma_{{\frak L}_1}^l(\sigma_{{\frak L}_1}^{K-g_{(x,L)}(p)}(x))
=
\sigma_{{\frak L}_1}^l(\sigma_{{\frak L}_1}^{M-g_{(z,L)}(q)}(z))
\end{equation*}
so that 
$
\sigma_{{\frak L}_1}^{l+K-g_{(x,L)}(p)}(x)
=
\sigma_{{\frak L}_1}^{l+M-g_{(z,L)}(q)}(z).
$
We then have
\begin{equation*}
\sigma_{\widetilde{\frak L}_1}^{l+K-g_{(x,L)}(p) +p}(e_{[p]}x)
=
\sigma_{\widetilde{\frak L}_1}^{l+M-g_{(z,L)}(q)+q}(e_{[q]}z).
\end{equation*}
Put
\begin{equation*}
n =(l+K-g_{(x,L)}(p) +p) -(l+M-g_{(z,L)}(q)+q) = N-g_{(x,L)}(p) +p+g_{(z,L)}(p) -q
\end{equation*}
so that 
$((x,p), n ,(z,q)) \in G_{\widetilde{\frak L}_1}$
and 
$n+q+g_{(x,L)}(p) -p-g_{(z,L)}(p) =K-M=N.$
This shows that 
$$
\tilde{\varphi}((x,p),n,(z,q)) = (y,N,w)
$$
proving that
 the map 
$\tilde{\varphi}:G_{\widetilde{\frak L}_1}
\longrightarrow  G_{\widetilde{\frak L}_2}
$
yields an isomorphism of the  \'etale groupoids.

 Recall that the factor map
$$
\tilde{\pi}_{{\frak{L}}_i}: G_{\widetilde{\frak L}_i}^0 = X_{{\frak L}_i} \times \Zp
\longrightarrow 
\widetilde{X}_{\Lambda_i} = X_{\Lambda_i} \times \Zp, \qquad i=1,2
$$
is defined in \eqref{eq:pitilde}. 
Define 
$\tilde{h}:\widetilde{X}_{\Lambda_1} \longrightarrow \widetilde{X}_{\Lambda_2}
$ by setting
$\tilde{h}(x,n) = \xi(x,n)$ for $(x,n) \in \widetilde{X}_{\Lambda_1}.$
It is a homeomorphism by Lemma \ref{lem:7.6}
and satisfies
$\tilde{\pi}_{{\frak L}_2} 
\circ \tilde{\varphi}|_{G_{\widetilde{\frak L}_1}^0} = 
\tilde{h}\circ \tilde{\pi}_{{\frak L}_1}.
$

We will next show  the equality
$c_{\widetilde{\frak L}_1} =c_{\widetilde{\frak L}_2}\circ\tilde{\varphi}. 
$
For an element
$ ((x,p), n,(z,q))\in G_{\widetilde{\frak L}_1},
$
the definition 
\eqref{eq:ctildel}
of 
$c_{{\widetilde{\frak L}}_1}$
tells us that 
$c_{{\widetilde{\frak L}}_1}((x,p), n,(z,q)) = n+q-p.$
On the other hand,
we see that
\begin{align*}
(c_{\widetilde{\frak L}_2}\circ\tilde{\varphi})((x,p), n,(z,q)) 
= & c_{\widetilde{\frak L}_2}(\xi(x,p), n+ q+ g_{(x,L)}(p) -p- g_{(z,L)}(q), \xi(z,q)) \\
= & n+q-p
\end{align*}
proving
$
c_{\widetilde{\frak L}_1}=
c_{\widetilde{\frak L}_2}\circ\tilde{\varphi}.
$
We thus proved the implication (i) $\Longrightarrow$ (ii).

\medskip

We will second prove the implication
(ii) $\Longrightarrow$ (i):
Suppose that 
there exist an isomorphism
$\tilde{\varphi}: G_{\widetilde{\frak L}_1}\longrightarrow 
G_{\widetilde{\frak L}_2}
$ of \'etale groupoids 
and a homeomorphism
$\tilde{h} :\widetilde{X}_{\Lambda_1} \longrightarrow \widetilde{X}_{\Lambda_2}
$
such that 
$\tilde{\pi}_{{\frak L}_2} 
\circ \tilde{\varphi}|_{G_{\widetilde{\frak L}_1}^0} = 
\tilde{h}\circ \tilde{\pi}_{{\frak L}_1}
$ 
and 
$c_{\widetilde{\frak L}_2}\circ\tilde{\varphi} =
c_{\widetilde{\frak L}_1}.$
For $x \in X_{{\frak L}_1}$, the element
$e_{[0]}x =(x,0)$ belongs to 
$ X_{\widetilde{\frak L}_1}$.
Hence
$((x,0),0,(x,0))$ defines an element of the groupoid
$G_{{\frak L}_1}$.
We then have
$
\tilde{\varphi}((x,0),0,(x,0)) =((\psi(x),m(x)),0,(\psi(x),m(x)))
$ 
for some $\psi(x) \in X_{{\frak L}_2}$ and $m(x) \in \Zp$.
Since $\tilde{\varphi}$ is continuous,
the maps 
$\psi:X_{{\frak L}_1}\longrightarrow X_{{\frak L}_2}$
and
$m: X_{{\frak L}_1}\longrightarrow \Z$
are continuous.
Since  
$((x,0), 1,(\sigma_{{\frak L}_1}(x),0))$ 
yields an element of the groupoid
$G_{{\frak L}_1}$ for $x \in X_{{\frak L}_1}$,
the condition
$c_{\widetilde{\frak L}_2}\circ\tilde{\varphi} =
c_{\widetilde{\frak L}_1}$
implies that
\begin{equation*}
\tilde{\varphi}((x,0), 1,(\sigma_{{\frak L}_1}(x),0))
=((\psi(x),m(x)), 1 + m(x) -m(\sigma_{{\frak L}_1}(x)),
 (\psi(\sigma_{{\frak L}_1}(x)), m(\sigma_{{\frak L}_1}(x))).
\end{equation*}
Hence there exist
$p,p' \in \Zp$ such that 
$p\ge m(x),\, p'\ge m(\sigma_{{\frak L}_1}(x))$ 
and
\begin{equation*}
p-p' =1+m(x)-m(\sigma_{{\frak L}_1}(x)),\qquad
\sigma_{{\frak L}_2}^{p-m(x)}(\psi(x)) 
= \sigma_{{\frak L}_2}^{p'-m(\sigma_{{\frak L}_1}(x))}(\psi(\sigma_{{\frak L}_1}(x))). 
\end{equation*}
By putting $l(x) = p'-m(\sigma_{{\frak L}_1}(x))\in \Zp$, we have
\begin{equation}
\sigma_{{\frak L}_2}^{l(x)+1}(\psi(x)) 
= \sigma_{{\frak L}_2}^{l(x)}(\psi(\sigma_{{\frak L}_1}(x))). \label{eq:sigmalpsix}
\end{equation}
As the function $l:X_{{\frak L}_1} \longrightarrow \Zp$
is continuous,
there exists $L \in \Zp$ such that 
\begin{equation*}
\sigma_{{\frak L}_2}^{L+1}(\psi(x)) 
= \sigma_{{\frak L}_2}^{L}(\psi(\sigma_{{\frak L}_1}(x)))
\quad
\text{ for }
x \in X_{{\frak L}_1}. 
\end{equation*}
Define
$\varphi =\sigma_{{\frak L}_2}^L\circ\psi:
X_{{\frak L}_1}\longrightarrow X_{{\frak L}_2}
$
which is continuous satisfying
 $
\sigma_{{\frak L}_2}\circ\varphi = \varphi\circ\sigma_{{\frak L}_1}.
$
Define 
$\bar{\varphi} :
\bar{X}_{{\frak L}_1}\longrightarrow \bar{X}_{{\frak L}_2}
$
by setting
\begin{equation}
\bar{\varphi}(x)_{[k,\infty)} = \varphi(x_{[k,\infty)})
\quad
\text{ for }
x =(x_n)_{n\in \Z}\in \bar{X}_{{\frak L}_1},\,
k\in \Z. \label{eq:varphix}
\end{equation}
By the condition
$\sigma_{{\frak L}_2}\circ\varphi =\varphi \circ\sigma_{{\frak L}_1}$,
we know that 
$\bar{\varphi}(x)$ defines an element of
$\bar{X}_{{\frak L}_2}$
and satisfies the condition 
 $
\bar{\sigma}_{{\frak L}_2}\circ\bar{\varphi}
 = 
\bar{\varphi}\circ\bar{\sigma}_{{\frak L}_1}.
$
We will show that 
$\bar{\varphi} :
\bar{X}_{{\frak L}_1}\longrightarrow \bar{X}_{{\frak L}_2}
$
is a homeomorphism.

We will first prove that 
$\bar{\varphi} :
\bar{X}_{{\frak L}_1}\longrightarrow \bar{X}_{{\frak L}_2}
$
is injective.
Now suppose that 
$\bar{\varphi}(x) = \bar{\varphi}(z)$
for $x,z \in  \bar{X}_{{\frak L}_1}.$
This implies that 
$\bar{\varphi}(x)_{[k,\infty)} = \bar{\varphi}(z)_{[k,\infty)}$
and hence
$\varphi(x_{[k,\infty)}) = \varphi(z_{[k,\infty)})$
for all $k \in \Zp$.
We first consider $k=1$ and put
$x' = x_{[1,\infty)}, \, 
 z' = z_{[1,\infty)}
\in X_{{\frak L}_1}.$
Hence we know
 that 
$\varphi(x') = \varphi(z').$
There exist $m,m'\in \Zp$ such that 
\begin{align*}
\tilde{\varphi}((x',0), 0, (x',0))
= & ((\psi(x'),m), 0, (\psi(x'), m)), \\
\tilde{\varphi}((z',0), 0, (z',0))
= & ((\psi(z'),m'), 0, (\psi(z'), m')).
\end{align*}
Since
$$
\sigma_{\widetilde{\frak L}_2}^{L+m}(\psi(x'), m)
=\sigma_{{\frak L}_2}^L(\psi(x')) =\sigma_{{\frak L}_2}^L(\psi(z')) 
=\sigma_{\widetilde{\frak L}_2}^{L+m'}(\psi(z'), m'),
$$
we know 
$
((\psi(x'),m), m-m', (\psi(z'), m')) \in G_{\widetilde{\frak L}_2}
$
so that 
$
\tilde{\varphi}^{-1}((\psi(x'),m), m-m', (\psi(z'), m')) \in G_{\widetilde{\frak L}_1}.
$
Since
$
\tilde{\varphi}^{-1}((\psi(x'),m), m-m', (\psi(z'), m')) 
= ((x',0), j, (z',0)) 
$
for some $j \in \Zp$,
we have
\begin{equation*}
\tilde{\varphi}((x',0), j, (z',0))
=((\psi(x'),m), m-m', (\psi(z'), m')). 
 \end{equation*}
By the assumption
$c_{\widetilde{\frak L}_1} = c_{\widetilde{\frak L}_2}\circ \tilde{\varphi}$,
we have
$$
j = 
c_{\widetilde{\frak L}_1}((x',0), j, (z',0))
=
c_{\widetilde{\frak L}_2}((\psi(x'),m), m-m', (\psi(z'), m'))
=0.
$$
Therefore we have
\begin{equation*}
\tilde{\varphi}^{-1}((\psi(x'),m), m-m', (\psi(z'), m')) 
=((x',0), 0, (z',0)).
\end{equation*}
It follows that 
there exists $k' \in \Zp$ such that 
$
\sigma_{\widetilde{\frak L}_1}^{k'}(x',0) 
=
\sigma_{\widetilde{\frak L}_1}^{k'}(z',0) 
$
so that 
$
\sigma_{{\frak L}_1}^{k'}(x') 
=
\sigma_{{\frak L}_1}^{k'}(z'). 
$
Let $k'(x',z')$ be the smallest such nonnegative integer $k'$ satisfying
$
\sigma_{{\frak L}_1}^{k'}(x') 
=
\sigma_{{\frak L}_1}^{k'}(z'). 
$
As in the proof of Theorem 5.1 (I) $\Longrightarrow$ (III)
in Carlsen--Rout's paper \cite{CR}, 
one knows that the function 
$k': \{(x', z') \in X_{{\frak L}_1}\times X_{{\frak L}_1} \mid 
\varphi(x') = \varphi(z') \} \longrightarrow \Zp
$
is continuous on the compact subset
$\{(x', z') \in X_{{\frak L}_1}\times X_{{\frak L}_1} \mid 
\varphi(x') = \varphi(z') \}$
of
$X_{{\frak L}_1}\times X_{{\frak L}_1}$.
Hence there exists 
$K \in \Zp$ such that 
$\sigma_{{\frak L}_1}^K(x') = \sigma_{{\frak L}_1}^K(z'),$
so that 
we have 
$x_{[1+K, \infty)} =z_{[1+K, \infty)}$
for all
$x', z' \in X_{{\frak L}_1}
$
satisfying 
$ 
\varphi(x') = \varphi(z').$
Now the condition 
$\bar{\varphi}(x) = \bar{\varphi}(z)$
also implies
$\varphi(\sigma_{{\frak L}_1}^{k-1}(x)_{[1,\infty)})
=\varphi(\sigma_{{\frak L}_1}^{k-1}(z)_{[1,\infty)})
$
for an arbitrary fixed $k \in \Z$.
Applying the above discussion for 
$\sigma_{{\frak L}_1}^{k-1}(x)_{[1,\infty)},
\, \sigma_{{\frak L}_1}^{k-1}(z)_{[1,\infty)}
$
instead of 
$x_{[1,\infty)}, \, z_{[1,\infty)},$
respectively,
we see that 
$\sigma_{{\frak L}_1}^{k-1}(x)_{[1+K,\infty)}
=\sigma_{{\frak L}_1}^{k-1}(z)_{[1+K,\infty)}
$
and hence
$x_{[k+K,\infty)} = z_{[k+K,\infty)}$.
As $k \in \Z$ is arbitrary, 
we have
$x = z$, and hence $\bar{\varphi}$ is injective.

We will next show that $\bar{\varphi}$ is surjective.
For $y \in X_{{\frak L}_2}$, we have
$
\tilde{\varphi}^{-1}((y,0),0,(y,0)) 
=((x,n),0,(x,n))
$
for some
$x \in X_{{\frak L}_1}$ and $n \in \Zp$.
Take $m \in \Zp$ such that 
$
\tilde{\varphi}((x,0),0,(x,0))
=((\psi(x),m),0,(\psi(x),m)).
$
We know that 
$((x,0), -n,(x,n)) \in G_{\widetilde{\frak L}_1}.$
Hence
$\tilde{\varphi}((x,0), -n,(x,n))
= ((\psi(x), m), q, (y,0))$
for some
$q \in \Z$.
Since
$c_{\widetilde{\frak L}_1} = c_{\widetilde{\frak L}_2}\circ\tilde{\varphi},$
we have
\begin{gather*}
c_{{\frak L}_1}((x,0), -n,(x,n)) = -n-(0-n) =0, \\
(c_{{\frak L}_2}\circ\tilde{\varphi})((x,0), -n,(x,n)) =((\psi(x), m), q, (y,0))= q-m,
\end{gather*}
so that $q=m$.
Hence we have
$\tilde{\varphi}((x,0), -n,(x,n))
= ((\psi(x), m), m, (y,0)).$
There exist
$p,q \in \Zp$ 
such that 
$p-q =m$
and
$\sigma_{{\frak L}_2}^p(\psi(x), m) =
\sigma_{{\frak L}_2}^q(y, 0),
$
so that 
$
\sigma_{{\frak L}_2}^{q}(\psi(x)) = 
\sigma_{{\frak L}_2}^q(y).
$
By a similar argument to the previous one,
we may find a positive integer $M\in \Zp$ such that 
for any $y \in X_{{\frak L}_2}$, there exists $x \in X_{{\frak L}_1}$ such that 
$
\sigma_{{\frak L}_2}^M(\psi(x)) = 
\sigma_{{\frak L}_2}^M(y).
$
It is routine to see that 
$\bar{\varphi}:\bar{X}_{{\frak L}_1}\longrightarrow \bar{X}_{{\frak L}_2}
$
is surjective.

 Therefore $\bar{\varphi}:\bar{X}_{{\frak L}_1}\longrightarrow \bar{X}_{{\frak L}_2}
$
is a homeomorphism satisfying
$\bar{\varphi}\circ \bar{\sigma}_{{\frak L}_1} =
\bar{\sigma}_{{\frak L}_2}\circ\bar{\varphi}.$
By the construction of 
$\bar{\varphi}$ given in \eqref{eq:varphix},
it is obvious that 
$\bar{\varphi}:\bar{X}_{{\frak L}_1}\longrightarrow \bar{X}_{{\frak L}_2}
$
is right asymptotically topologically conjugate.


We will next define a topological conjugacy 
$\bar{h}: \Lambda_1 \longrightarrow \Lambda_2$
in the following way.
For $\alpha = (\alpha_i)_{i\in \N} \in X_{\Lambda_1},$
the element 
$e_{[0]}\alpha = (\alpha,0)$ belongs to
$\widetilde{X}_{\Lambda_1}$,
so that 
$\tilde{h}(\alpha,0)$ belongs to
$\widetilde{X}_{\Lambda_2}$.
Hence one may find $h_1(\alpha) \in X_{\Lambda_2}$ and $p_1(\alpha) \in \Zp$ 
such that 
$\tilde{h}(\alpha,0) =(h_1(\alpha), p_1(\alpha)) \in \widetilde{X}_{\Lambda_2}.$
Since
$\tilde{h}:\widetilde{X}_{\Lambda_1}\longrightarrow \widetilde{X}_{\Lambda_2}$
is continuous,
so is the map $h_1:X_{\Lambda_1}\longrightarrow X_{\Lambda_2}$.
Take $x = (x_i)_{i\in \N} \in X_{{\frak L}_1}$ such that 
$\alpha = \pi_1(x)$.
Now by the equality
$\tilde{\pi}_{{\frak L}_2} 
\circ \tilde{\varphi}|_{G_{\widetilde{\frak L}_1}^0} = 
\tilde{h}\circ \tilde{\pi}_{{\frak L}_1},
$ 
we have
\begin{align*}
(h_1(\alpha), p_1(\alpha))
= & \tilde{h}(\alpha,0)  
=  \tilde{h}(\tilde{\pi}_{{\frak L}_1}(x,0))  
=  \tilde{\pi}_{{\frak L}_2}\circ\tilde{\varphi}(x,0) \\
= & \tilde{\pi}_{{\frak L}_2}(\psi(x), m(x))  
=  ({\pi}_2(\psi(x)), m(x)).
\end{align*}
Hence we have
$ h_1(\alpha) = \pi_2(\psi(x)).$
As
$
\sigma_{\Lambda_1}(\alpha) = 
\sigma_{\Lambda_1}(\pi_{1}(x)) = 
\pi_{1}(\sigma_{{\frak L}_1}(x)),
$
we have 
$$ h_1(\sigma_{\Lambda_1}(\alpha)) = 
{\pi}_2(\psi(\sigma_{{\frak L}_1}(x))).
$$
It then follows that 
\begin{align*}
\sigma_{\Lambda_2}^{L+1}(h_1(\alpha))
= & \sigma_{\Lambda_2}^{L+1}(\pi_2(\psi(x))) 
=  {\pi}_2(\sigma_{{\frak L}_2}^{L+1}(\psi(x))) 
=  {\pi}_2(\sigma_{{\frak L}_2}^{L}(\psi(\sigma_{{\frak L}_1}(x)))) \\
= & \sigma_{\Lambda_2}^{L}({\pi}_{2}(\psi(\sigma_{{\frak L}_1}(x)))) 
=  \sigma_{\Lambda_2}^{L}(h_1(\sigma_{\Lambda_1}(\alpha))).
\end{align*}
Define $h:X_{\Lambda_1}\longrightarrow X_{\Lambda_2}$ by
$h = \sigma_{\Lambda_2}^L\circ h_1$
and
$\bar{h}:{\Lambda_1}\longrightarrow {\Lambda_2}$ by
\begin{equation*}
\bar{h}(\alpha)_{[k,\infty)} = h(\alpha_{[k,\infty)}) 
\qquad
\text{ for } 
\alpha = (\alpha_i)_{i\in \Z} \in \Lambda_1, \, k \in \Z.
\end{equation*}
We then have a continuous map
$\bar{h}:{\Lambda_1}\longrightarrow {\Lambda_2}$ 
such that 
$\bar{h}\circ \sigma_{\Lambda_1} = \sigma_{\Lambda_2}\circ \bar{h}.$
Now it is obvious  by its construction that 
the equality
\begin{equation}
\bar{\pi}_2\circ \bar{\varphi} = \bar{h}\circ \bar{\pi}_1 \label{eq: barpivarphi1}
\end{equation}
holds. Since
both $\bar{\pi}_2, \bar{\varphi}$ are surjective,
we see that $\bar{h}:{\Lambda_1}\longrightarrow {\Lambda_2}$ 
is surjective.

Similarly we have a continuous surjection
$\bar{h'}:\Lambda_2 \longrightarrow \Lambda_1$ from $\tilde{h}^{-1}$ such that 
\begin{equation}
\bar{\pi}_1\circ \bar{\varphi}^{-1} = \bar{h'}\circ \bar{\pi}_2 \label{eq: barpivarphi2}.
\end{equation}
By \eqref{eq: barpivarphi1},
we have
$
\bar{h'}\circ\bar{\pi}_2\circ \bar{\varphi} 
= \bar{h'}\circ\bar{h}\circ \bar{\pi}_1 
$
so that 
\begin{equation*}
\bar{\pi}_1 
= \bar{h'}\circ\bar{h}\circ \bar{\pi}_1 
\end{equation*}
because of  \eqref{eq: barpivarphi2}.
As
$\bar{\pi}_1: \bar{X}_{{\frak L}_1} \longrightarrow \Lambda_1$
is surjective,
the map $\bar{h}: \Lambda_1 \longrightarrow \Lambda_2$ 
is injective and hence  a homeomorphism.
We obtain that
$(\Lambda_1, \sigma_1)$ and 
$(\Lambda_2, \sigma_2)$
are $({\frak L}_1,{\frak L}_2)$-conjugate.
Consequently, we  complete the proof of Theorem \ref{thm:twosidedconjugacy1}.
\qed

\medskip

{\it Acknowledgments:}
The author would like to deeply thank the referees 
for their careful reading, helpful suggestions and comments
for the first draft of the paper.
He particularly wishes to thank for helpful advices about Weyl groupoids, 
proof of Proposition \ref{prop:onesidedgroupoid} (ii) $\Longrightarrow$ (i)  
and proof of Theorem \ref{thm:twosidedconjugacy1} (ii) $\Longrightarrow$ (i).  
 This work was  supported by JSPS KAKENHI Grant Numbers 15K04896, 19K03537.





\begin{thebibliography}{99}






\bibitem{AER}
{\sc S. Arklint, S. Eilers and E. Ruiz},
{\it A dynamical characterization of diagonal preserving $*$-isomorphisms 
of graph $C^*$-algebras}, 
 Ergodic Theory Dynam. Systems {\bf 38}(2018), pp. \ 2401--2421.








\bibitem{BC}
{\sc K. A. Brix and T. M. Carlsen}
{\it Cuntz-Krieger algebras and one-sided conjugacy of shifts of finite type 
and their groupoids}, 
preprint, arXiv:1712.00179 [mathOA].






\bibitem{BCW}
{\sc N. Brownlowe, T. M.  Carlsen and M. F. Whittaker},
{\it Graph algebras and orbit equivalence}, 
Ergodic Theory Dynam. Systems {\bf 37}(2017), pp. \ 389--417. 


\bibitem{CEOR}
{\sc T. M. Carlsen,  S. Eilers, E. Ortega and G. Restorff},
{\it Flow equivalence and orbit equivalence for shifts of finite type 
and isomorphism of their groupoids}, 
J. Math. Anal. Appl. {\bf 469}(2019), pp. \ 1088--1110. 


\bibitem{CR}
{\sc T. M. Carlsen and J. Rout},
{\it Diagonal-preserving gauge invariant isomorphisms of graph $C^*$-algebras},
J. Funct. Anal. {\bf 273}(2017). pp. \ 2981--2993.


\bibitem{CRS}
{\sc T. M. Carlsen, E. Ruiz and A. Sims},
{\it Equivalence and stable isomorphism of groupoids, and diagonal-preserving 
stable isomorphisms of graph $C^*$-algebras and Leavitt path algebras},
Proc. Amer. Math. Soc. {\bf 145}(2017), 1581--1592.



\bibitem{Cuntz}{\sc J. Cuntz},
{\it Simple $C^*$-algebras generated by isometries},
 Comm.\ Math.\ Phys.\ {\bf 57}(1977), pp.\ 173--185.








\bibitem{Cu3}{\sc J. ~Cuntz}, 
{\it A class of $C^*$-algebras and topological Markov chains II: 
reducible chains and the Ext-functor for $C^*$-algebras},
Invent.\ Math.\ {\bf 63}(1980), pp.\ 25--40.





\bibitem{CK}{\sc J. ~Cuntz and W. ~Krieger},
{\it A class of $C^*$-algebras and topological Markov chains},
 Invent.\ Math.\
 {\bf 56}(1980), pp.\ 251--268.



\bibitem{De}
{\sc V. Deaconu}, 
{\it Groupoids associated with endomorphisms},
Trans. \ Amer.\ Math.\ Soc.\
{\bf 347}(1995), pp.\ 1779--1786.


\bibitem{De2}
{\sc V. Deaconu}, 
{\it Generalized Cuntz-Krieger algebras},
Proc. \ Amer.\  Math.\ Soc.\ 
{\bf  124}(1996), pp.\ 3427--3435.
























\bibitem{LM}{\sc D. ~Lind and B. ~Marcus},
{\it An introduction to symbolic dynamics and coding},
 Cambridge University Press, Cambridge
(1995).







\bibitem{MaDocMath1999}
{\sc K. Matsumoto},
{\it  Presentations of subshifts and their topological conjugacy invariants},
 Doc.\ Math.\
{\bf  4}(1999),
pp.\ 285-340.


\bibitem{MaDocMath2002}
{\sc K. Matsumoto},
{\it  $C^*$-algebras associated with presentations of subshifts}, Doc.\ Math.\
{\bf  7}(2002),
pp.\ 1--30.








%
\bibitem{MaMS2005}{\sc K. Matsumoto},
{\it Construction and pure infiniteness of $C^*$-algebras associated with 
lambda-graph systems},  Math.\ Scand.\ {\bf 97}(2005), pp.\ 73--89.


\bibitem{MaCM2009} {\sc K. Matsumoto},
{\it  Orbit equivalence in $C^*$-algebras defined by actions of  symbolic dynamical systems},
Contemporary Math.  {\bf 503}(2009),  pp.\ 121--140.

\bibitem{MaPacific}{\sc K. Matsumoto},
{\it Orbit equivalence of topological Markov shifts and Cuntz--Krieger algebras},
Pacific J.\ Math.\  {\bf 246}(2010), 199--225.


\bibitem{MaYMJ2010}
{\sc K. Matsumoto},
{\it Orbit equivalence of one-sided subshifts and the associated $C^*$-algebras},
Yokohama Math. \ J.\ {\bf 56}(2010), pp.\ 59--85.


\bibitem{MMKyoto}{\sc K. Matsumoto and H. Matui},
{\it Continuous orbit equivalence of topological Markov shifts and Cuntz--Krieger algebras},
Kyoto J. Math. {\bf 54}(2014), pp.\ 863--878.

























\bibitem{Renault}{\sc J. Renault},
{\it A groupoid approach to $C^*$-algebras},
Lecture Notes in Math.  793, 
Springer-Verlag, Berlin, Heidelberg and New York (1980).

\bibitem{Renault2000}{\sc J. Renault}
{\it Cuntz-like algebras}, 
Operator theoretical methods (Timisoara, 1998), 
pp. \ 371--386, Theta Found., Bucharest, 2000.

\bibitem{Renault2}{\sc J. Renault},
{\it  Cartan subalgebras in $C^*$-algebras},
Irish Math. Soc. Bull. 
{\bf 61}(2008), pp.\ 29--63. 




\bibitem{Renault3}{\sc J. Renault},
{\it Examples of masas in $C^*$-algebras},
 Operator structures and dynamical systems, pp.\ 259--265, 
 Contemp. Math., 503, Amer. Math. Soc., Providence, RI, 2009. 


















\bibitem{Tomforde2}{\sc M. Tomforde},
{\it Stability of $C^*$-algebras associated to graphs},  
Proc.  Amer. Math. Soc. {\bf 132}(2004), pp. 1787--1795.







\bibitem{Williams} {\sc R. F. Williams},
{\it Classification of subshifts of finite type}, 
 Ann.\ Math.\  {\bf 98}(1973), pp.\ 120--153.
 erratum, Ann.\ Math.\
{\bf 99}(1974), pp.\ 380--381.



\end{thebibliography}
\end{document}